\pgfplotsset{compat=1.18}
\newtheorem{theorem}{Theorem}[section]
\newtheorem{proposition}[theorem]{Proposition}
\newtheorem{lemma}[theorem]{Lemma}
\newtheorem{corollary}[theorem]{Corollary}
\newtheorem{claim}[theorem]{Claim}
\Crefname{claim}{Claim}{Claims}
\theoremstyle{definition}
\newtheorem{remark}[theorem]{Remark}
\newtheorem{question}[theorem]{Question}
\newtheorem{definition}[theorem]{Definition}
\newtheorem{property}{Property}
\renewcommand{\theproperty}{P\arabic{property}}
\Crefname{property}{Property}{Properties}
\newcommand{\car}{\mathbin{\Box}}
\newcommand{\symdif}{\mathbin{\triangle}}
\newcommand{\Bin}{\operatorname{Bin}}
\newcommand{\Boot}{\operatorname{Boot}}
\newcommand{\dmin}{\delta(G)}
\newcommand{\dmax}{\Delta(G)}
\newcommand{\dist}{\operatorname{dist}}
\renewcommand{\le}{\leqslant}
\renewcommand{\leq}{\leqslant}
\renewcommand{\ge}{\geqslant}
\renewcommand{\geq}{\geqslant}
\renewcommand{\emptyset}{\varnothing}
\newcommand{\eps}{\varepsilon}
\providecommand\given{\nonscript\:\ifthenelse{\equal{\delimsize}{}}{\big\vert}{\delimsize\vert}\nonscript\:\mathopen{}}
\let\Pr\undefined
\DeclarePairedDelimiterXPP\Pr[1]{\mathbb{P}}{[}{]}{}{#1}
\DeclarePairedDelimiterXPP\Ex[1]{\mathbb{E}}{[}{]}{}{#1}
\DeclarePairedDelimiterXPP\Prs[1]{\mathbb{P}^*}{[}{]}{}{#1}
\DeclarePairedDelimiterXPP\Exs[1]{\mathbb{E}^*}{[}{]}{}{#1}
\DeclarePairedDelimiterXPP\Prp[1]{\mathbb{P}'}{[}{]}{}{#1}
\title{Universal behaviour of majority bootstrap percolation on high-dimensional geometric graphs}
\author{Maur\'icio Collares \and Joshua Erde \and Anna Geisler \and Mihyun Kang}
\address{Institute of Discrete Mathematics, Graz University of Technology, Steyrergasse 30, 8010 Graz, Austria}
\email{mauricio@collares.org, \{erde, geisler, kang\}@math.tugraz.at}
\subjclass{60K35, 60C05 (Primary)}
\begin{document}
\begin{abstract}
  Majority bootstrap percolation is a monotone cellular automata that can be thought of as a model of infection spreading in networks.
  Starting with an initially infected set, new vertices become infected once more than half of their neighbours are infected.
  The average case behaviour of this process was studied on the $n$-dimensional hypercube by Balogh, Bollob\'{a}s and Morris, who showed that there is a phase transition as the typical density of the initially infected set increases: For small enough densities the spread of infection is  typically local, whereas for large enough densities typically the whole graph eventually becomes infected.
  Perhaps surprisingly, they showed that the critical window in which this phase transition occurs is bounded away from $1/2$, and they gave bounds on its width on a finer scale.
  In this paper we consider the majority bootstrap percolation process on a class of high-dimensional geometric graphs which includes many of the graph families on which percolation processes are typically considered, such as grids, tori and Hamming graphs, as well as other well-studied families of graphs such as (bipartite) Kneser graphs, including the odd graph and the middle layer graph.
  We show similar quantitative behaviour in terms of the location and width of the critical window for the majority bootstrap percolation process on this class of graphs.
\end{abstract}
\maketitle
  
\section{Introduction}

\subsection{Motivation}

Bootstrap percolation is a process on a graph  which models the spread of an infection through a population.
This model was first considered by Chalupa, Leath and Reich \cite{Chalupa1979Grid} to describe and analyse magnetic systems, and has been widely used to describe other interacting particle systems, see \cite{JLMTZ17}.

More generally, in the physical sciences and in particular statistical physics, models in which a system evolves according to a set of `local' and homogeneous update rules are known as \emph{cellular automata}.
This includes the Ising model or lattice gas automata \cite{Hardy1973Gas, Ising1925}.
Bootstrap percolation is then an example of a \emph{monotone} cellular automaton -- the `state' (infected or uninfected) of each site can only change in one direction, and the update rule is \emph{homogeneous} (the same for each vertex) and \emph{local} (determined by the states of the neighbours).

These models were first analysed rigorously by Schonmann \cite{Schonmann1992CellularAutomata}.
A related model, introduced by Bollob\'as \cite{Bollobas1968Saturation}, is weak saturation, in which the edges of a graph $G$ get activated if they complete a copy of a fixed graph $H$.
Bootstrap percolation has been used to model various processes, from the zero-temperature Ising model in physics \cite{Fontes2002Ising} to neural networks \cite{Amini2010Neural} or opinion spreading processes in sociology \cite{Granovetter1978Sociology}.
For a survey on monotone cellular automata results related to bootstrap percolation see \cite{Morris2017CellularAutomata} and for physical applications \cite{Morris2017Bootstrap}.

The bootstrap percolation process  on a graph $G$ evolves in discrete time steps, which we call \emph{rounds}, starting with an initial set $A_0 \subseteq V(G)$ of infected vertices. In each round, new vertices become infected if at least a certain threshold $r$ of their neighbours have already been infected. Since infected vertices never recover, the set $A_i$ of vertices which are infected by the $i$-th round is non-decreasing. In other words, we have a sequence of sets $A_0 \subseteq A_1 \subseteq \ldots$ where $A_{i} = A_{i-1} \cup \left\{ v \in V(G) \colon \left|N(v) \cap A_{i-1}\right| \geq r \right\}$ for $i\in \mathbb N.$
We will refer to this process as \emph{$r$-neighbour bootstrap percolation}.
If the infection spreads to the entire population, i.e., if $\bigcup_{i=0}^{\infty} A_i = V(G)$, then the initial set $A_0$ is said to \emph{percolate} on $G$.

It is apparent that given a graph $G$ and an initially infected set $A_0$, the $r$-neighbour bootstrap percolation process is deterministic, and so each subset $A_0 \subseteq V(G)$ is either percolating or non-percolating.
Typically, given a graph or a family of graphs, one can ask about the \emph{worst case} or extremal behaviour -- what are the `minimal' percolating sets \cite{Bidgoli2021Hamming, dukes2023extremal, Morrison2018ExtremalHypercube} -- or about the \emph{typical} behaviour -- are `most' sets percolating or non-percolating? A natural way to frame the latter question is to look at the probability $\Phi(p,G)$ that a \emph{random} subset of a fixed density $p$ percolates.
More precisely, in {\em random} bootstrap percolation, given a graph $G$ and $p \in (0,1)$,  we let ${\bf A}_p$ be a $p$-random subset of $V(G)$ where each vertex in $V(G)$ is included in ${\bf A}_p$ independently with probability $p$ and define
\[
  \Phi(p,G) := \Pr{{\bf A}_p \text{ percolates on } G}.
\]
Since the $r$-neighbour bootstrap percolation process is monotone, it is clear that the probability of percolation $\Phi(p,G)$ is monotonically increasing in $p$.
It follows from standard results on the existence of thresholds for monotone graph properties \cite{Friedgut1999MonotoneThreshold} that there is a \emph{threshold} for random bootstrap percolation -- the percolation probability transitions continuously from almost $0$ to almost $1$ in a very small window around the \emph{critical probability}
\[
  p_c(G) := \inf \left\{ p \in (0,1) \colon  \Phi(p,G) \geq \frac{1}{2} \right\}.
\]

In this paper we will consider this process on a family of graphs $(G_n)_{n \in \mathbb{N}}$, where the critical probability $p_c(G_n)$ is then a function of $G_n$.
Asymptotically, this transition from percolation almost never to almost surely then happens in some \emph{critical window}, an interval $I_n \subseteq [0,1]$ below which $\Phi(p,G_n) = o_n(1)$ and above which $\Phi(p,G_n) = 1 - o_n(1)$. We say there is a \emph{sharp threshold} if the width of the critical window is small in comparison to the critical probability, i.e., if the quotient satisfies
\[
  \lim_{n\to\infty} \frac{|I_n|}{p_c(G_n)} = 0.
\]

In the first appearance of bootstrap percolation \cite{Chalupa1979Grid} the authors gave some bounds for the location and width of the critical window in bootstrap percolation on the \emph{Bethe lattice}.
Subsequent work in the area of physics also focused on lattice-like structures and observed threshold phenomena \cite{AizenmanS2001Threshold}.
The simplest examples of lattices, and the most well-studied in terms of bootstrap percolation, come from finite grids $[n]^d$ and in particular the \emph{$d$-dimensional hypercube} $[2]^d$, which we will denote by $Q_d$.
Holroyd \cite{holroyd2002sharp} demonstrated the existence of a sharp threshold in $2$-neighbour bootstrap percolation in two-dimensional grids $[n]^2$, i.e., the case $r=d=2$ and for arbitrary $n\in \mathbb N$.
In three dimensions, bounds on the critical probability for $r=3$ were determined by Cerf and Cirillo \cite{Cerf1998Lattice3D}.
Later work of Balogh, Bollobás and Morris \cite{Balogh2003Sharp,BaBoMo2010HighDimGrid,BaBoMo20093D} showed that there is a sharp threshold in grids in arbitrary dimension.
Cerf and Manzo \cite{CerfManzo2002FinVol} gave the critical probability for all finite grids with $3 \leq r \leq d$ up to constant factors.
This line of work culminated in the breakthrough result of Balogh, Bollobás, Duminil-Copin and Morris \cite{balogh2011Grid}, which gave a sharp threshold in all dimensions:
\begin{theorem}[{\cite[Theorem 1]{balogh2011Grid}}]
Let $d \geq r \geq 1$ and consider the random $(r+1)$-neighbour bootstrap percolation process. Then
  \[
    p_c\left([n]^d\right)= \left(\frac{\lambda(d, r)+o(1)}{\log_{r}(n)}\right)^{d-r},
  \]
where $\lambda(d, r)$ is an explicit constant depending on $d$ and $r$, and $\log_r$ is the iterated logarithm. 
\end{theorem}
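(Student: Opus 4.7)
The plan is to prove matching upper and lower bounds on $p_c([n]^d)$ via the now-standard framework of internally spanned droplets, hierarchies, and the Aizenman--Lebowitz lemma, proceeding by induction on $r$ with the codimension $d-r$ held fixed. The base case $r = 1$ is the earlier theorem of Balogh, Bollob\'as and Morris \cite{BaBoMo2010HighDimGrid} establishing the sharp threshold for $2$-neighbour bootstrap percolation in arbitrary dimension. The inductive step will relate $(r+1)$-neighbour percolation in dimension $d$ to $r$-neighbour percolation in dimension $d-1$, exploiting the observation that a growing $d$-dimensional ``droplet'' (axis-aligned box) spreads by one unit in a coordinate direction precisely when the induced $r$-neighbour process on the adjacent $(d-1)$-dimensional face of the droplet triggers, since the new vertex already has one neighbour inside the droplet and still needs $r$ further infected neighbours from the face.

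For the upper bound, I would fix $p = ((\lambda + o(1))/\log_{r}(n))^{d-r}$ for some $\lambda > \lambda(d,r)$ and argue that, with probability tending to $1$, a carefully chosen seed droplet grows to engulf the entire cube. The key computation expresses the probability that a droplet of side-lengths roughly $k_1, \dots, k_d$ grows by one unit in the $i$-th direction as (essentially) the probability that a $p$-random subset of the corresponding $(d-1)$-dimensional face percolates under the $r$-neighbour process, for which the inductive hypothesis gives a sharp estimate. Summing logarithms of these growth probabilities across all scales produces the critical integral that defines $\lambda(d,r)$.

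For the lower bound, I would fix $\lambda < \lambda(d,r)$ and rule out the existence of a droplet reaching critical size. The Aizenman--Lebowitz lemma reduces this to bounding the probability that some internally spanned droplet exists at an intermediate critical scale. To handle this, I would unroll the history of infection into a bounded-branching tree of internally spanned subdroplets --- a \emph{hierarchy} in the sense of Balogh--Bollob\'as--Morris --- and then sum over hierarchies the probability that every leaf droplet is internally spanned. The inductive hypothesis once more supplies sharp growth estimates along each branch, and the combinatorial cost of enumerating hierarchies is negligible at the critical scale.

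The main obstacle is the sharp determination of the constant $\lambda(d,r)$: one must extract exactly the same constant from the variational integral that drives the upper bound and from the asymptotic counting that drives the lower bound, even though the inductive hypothesis provides only asymptotic (not exact) information at each level. A secondary difficulty is the treatment of boundary effects and of ``defect'' droplets that nearly, but do not quite, reach critical size; controlling these while propagating the sharp constant through the inductive step is the crux of the argument. Finally, decoupling the induced $r$-neighbour process on each $(d-1)$-dimensional face sufficiently from the ambient $p$-random set, so that the inductive hypothesis may legitimately be applied, requires a careful conditioning argument that I expect to absorb much of the technical work.
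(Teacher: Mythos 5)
This statement is not proved in the paper at all: it is Theorem~1 of \cite{balogh2011Grid}, quoted verbatim in the introduction as background, so there is no internal proof to compare your proposal against. Judged against the actual proof in \cite{balogh2011Grid}, your outline does identify the correct architecture --- induction with the codimension $d-r$ fixed, droplet growth in $[n]^d$ governed by the lower-threshold process on $(d-1)$-dimensional faces, an Aizenman--Lebowitz-type lemma plus hierarchies for the lower bound, and a variational computation producing $\lambda(d,r)$.

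Nevertheless, as a proof the proposal has genuine gaps. First, the constant $\lambda(d,r)$ is never defined; the entire content of the theorem is that the \emph{same} explicit constant (arising as $\sup$ of a certain integral of functions $\beta_k$) emerges from both the growth computation and the hierarchy count, and you defer exactly this. Second, your base case is misattributed: \cite{BaBoMo2010HighDimGrid} concerns $2$-neighbour percolation with $d=d(n)\to\infty$, a different asymptotic regime; the base case actually needed is the sharp threshold for the $2$-neighbour process in \emph{fixed} dimension $d-r+2$, which prior to \cite{balogh2011Grid} was known only for $d=2$ (Holroyd) and had to be established as a major component of that paper. Third, the claim that a droplet grows ``precisely when'' the induced $r$-neighbour process on its face percolates is false as stated --- infection of the new slab can be assisted by sites outside the face, and the face process is only coupled, not equal, to a genuine $r$-neighbour process; repairing this (via covers, crossings and the conditioning argument you allude to) is where most of the length of the original proof lies. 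As it stands the proposal is a plan for the known proof rather than a proof.
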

Here, and throughout the paper, our asymptotics will be in terms of the parameter $n$ unless otherwise stated.

As well as in lattice-like graphs, the typical behaviour of bootstrap percolation has been considered on many other graph classes such as binomial random graphs \cite{Janson2012Gnp, Kang2016Gnp}, inhomogeneous random graphs \cite{Foun2017Inhom} and infinite trees \cite{balogh2005infiniteTrees}.

In many physical applications there is a related model which is also natural to consider, where a vertex becomes infected once at least half of its neighbours have been infected \cite{Fontes2002Ising}.
We call this process the \emph{majority bootstrap percolation process}.
Note that, in general, this update rule is no longer homogeneous, as we have a different infection threshold $r(v)=d(v)/2$ for each vertex $v$.
However, for $d$-regular graphs, this is again an example of $r$-neighbour bootstrap percolation with $r=d/2$. Note that, when analysing the asymptotic behaviour of this process on a family of regular graphs $(G_n)_{n \in \mathbb{N}}$, even though the infection threshold $r$ is fixed for each $G_n$, it might vary as a function of $n$.

It is not hard to see that, for any $n$-regular graph $G$ whose order is sub-exponential in $n$, there is a sharp threshold for percolation in the random majority bootstrap percolation process near the point $p=\frac{1}{2}$.
Indeed, by standard concentration results, if $p > \frac{1}{2} + \varepsilon$, then with probability tending to one as $n \to \infty$ more than half of the neighbours of \emph{every} vertex in $G$ will lie in the initially infected set, and so the process will percolate after a single round.
Conversely, if $p < \frac{1}{2} - \varepsilon$, then with probability tending to one as $n \to \infty$ less than half the neighbours of \emph{every} vertex in $G$ will lie in the initially infected set, and so the process will stabilise with the infected set being the initial set, which is most likely not the entire vertex set.

Whilst this behaviour cannot be \emph{universal} to all $n$-regular graphs (consider for example a disjoint union of many copies of a fixed $n$-regular graph), it was shown by Bollobás, Balogh and Morris \cite{BaBoMo2009} that this sharp threshold occurs as well for graphs of order super-exponential in $n$ which satisfy certain nice structural properties. The key point here is to have some control on the \emph{neighbourhood expansion} of the host graph, to ensure that all vertices at distance $i$ from a fixed vertex $x$ have `many' neighbours at distance $i+1$ from $x$.
\begin{theorem}[{\cite[Theorem 2.2]{BaBoMo2009}}, informal] \label{thm:BaBoMogeneral}
  Let $G$ be a regular graph whose order is sufficiently small in terms of the neighbourhood expansion of the graph and consider the random majority bootstrap percolation process. Then 
    \[
    p_c(G)=\frac{1}{2}+o(1).
    \]
\end{theorem}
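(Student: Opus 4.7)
The plan is to establish matching bounds $p_c(G) \ge 1/2 - o(1)$ and $p_c(G) \le 1/2 + o(1)$ by exploiting the $n$-regularity of $G$ together with the assumption that its order is not too large. In both directions, the starting point is the observation that, for each fixed vertex $v$, the number $X_v := |N(v) \cap {\bf A}_p|$ of initially infected neighbours has distribution $\Bin(n,p)$, so a Chernoff estimate controls how far $X_v$ typically lies from $n/2$.

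For the lower bound, I would take $p = 1/2 - \eps$ with $\eps = \eps(n) \to 0$ slowly enough that $\eps^2 n \gg \log |V(G)|$. Chernoff gives $\Pr{X_v \ge n/2} \le \exp(-c \eps^2 n)$ for some absolute $c > 0$, and a union bound over $V(G)$ shows that with high probability no vertex outside ${\bf A}_p$ has a majority of its neighbours in ${\bf A}_p$. Hence the process stabilises in round zero at the initial set, and since $\Pr{{\bf A}_p = V(G)} = p^{|V(G)|} = o(1)$, percolation fails with high probability.

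For the upper bound, take $p = 1/2 + \eps$ and let $B_0$ denote the \emph{bad} set of vertices with $X_v < n/2$. By the same Chernoff estimate, $\Ex{|B_0|} \le |V(G)|\exp(-c\eps^2 n)$, and when this quantity is $o(1)$ every vertex outside $B_0$ is infected in round one, so percolation is immediate. For graphs whose order is too large for this direct union bound, the idea is to iterate: setting $B_{k+1} := \{ v : |N(v) \cap B_k| \ge n/2 \}$, any vertex not in $B_{k+1}$ must be infected by round $k+1$, so it suffices to show that $B_k$ is eventually empty. Using the neighbourhood expansion hypothesis together with a further concentration argument, one shows that $|B_{k+1}|$ shrinks geometrically in $k$, so that after boundedly many rounds $B_k = \emptyset$ with high probability and percolation occurs. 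The main obstacle is controlling this cascade: one cannot condition on $B_k$ directly without destroying the independence of ${\bf A}_p$, so instead one must design a nested family of \emph{candidate} bad sets whose sizes can be bounded unconditionally using the expansion of successive neighbourhoods around each vertex. The precise trade-off between $\eps$, the expansion profile of $G$, and $|V(G)|$ arising from this iteration is what the informal hypothesis ``sufficiently small in terms of the neighbourhood expansion'' makes quantitative.
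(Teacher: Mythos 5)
Your Chernoff-plus-union-bound argument in both directions is exactly the justification this paper gives (in the paragraph preceding the theorem statement) for $n$-regular graphs of order $\exp(o(n))$: above $\tfrac12+\eps$ every vertex is infected in a single round, below $\tfrac12-\eps$ the process freezes at $A_0$. For that regime your write-up is correct and essentially identical to the paper's; note that the theorem itself is quoted from \cite{BaBoMo2009} and is not re-proved here.

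The substance of the cited theorem, however, is that the conclusion persists for graphs of order \emph{super}-exponential in $n$ provided the neighbourhood expansion is strong enough, and there your sketch has two genuine gaps. First, for the $1$-statement, the claimed geometric decay of the bad sets $B_{k+1}=\{v: |N(v)\cap B_k|\ge n/2\}$ does not follow from anything you have set up: double-counting the edges between $B_k$ and $B_{k+1}$ only gives $|B_{k+1}|\le 2|B_k|$, and once $k\ge 1$ the set $B_{k+1}$ is a deterministic function of $B_0$, so no further concentration is available at later stages. The working argument is local rather than global: one shows that $\Pr{v\notin A_k}$ decays super-exponentially in the expansion of $B(v,k)$ by exhibiting, for an uninfected $v$, a witness structure of many uninfected vertices in the spheres $S(v,\ell)$, splitting a sphere into classes of pairwise-distant (hence independent) vertices, and applying Chernoff within a class; this is precisely the mechanism of \Cref{l:dimred,l:super-exponential} together with \Cref{l:partitiondist} in this paper, and of the proof of Theorem 2.2 in \cite{BaBoMo2009}. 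The hypothesis ``order sufficiently small in terms of the neighbourhood expansion'' is then exactly what makes the final union bound over $V(G)$ close. Second, your $0$-statement silently assumes the process stabilises at round zero, which is false in the super-exponential regime: whp some vertices \emph{are} infected in round one, and one must show that the infection nonetheless dies out. That requires a dominating process in which the threshold is relaxed in the early rounds (the $\Boot_k(\gamma)$ process of \Cref{s:0-statement}), so that a vertex infected in round $\ell+1$ is forced to have many neighbours infected in round $\ell$, which again yields an improbable witness structure. Without these two ingredients your argument only covers the sub-exponential case.
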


Furthermore, in the case of the hypercube, they looked at the location and width of the critical window on a finer scale, and found that the point $p=\frac{1}{2}$ actually lies outside, and in fact above, the critical window.

\begin{theorem}[{\cite[Theorem 2.1]{BaBoMo2009}}]\label{t:BBM}
  Let $Q_n$ be the $n$-dimensional hypercube and consider the random majority bootstrap percolation process where the set of initially infected vertices is a $p$-random subset ${\bf A}_p$ with
  \[
    p:=\frac{1}{2}-\frac{1}{2} \sqrt{\frac{\log n}{n}} + \frac{\lambda \log \log n}{\sqrt{n \log n}}.
  \]
  Then
  \begin{equation*}
    \lim_{n \to \infty} \Phi(p, Q_n) = \left\{
    \begin{array} {c@{\quad \textup{if} \quad}l}
      0 & \lambda \leq -2,       \\[+1ex]
      1 & \lambda > \frac{1}{2}.
    \end{array}\right.
  \end{equation*}
\end{theorem}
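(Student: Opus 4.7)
Both directions of the theorem are driven by the asymptotics of
\begin{equation*}
  q := \Pr{\Bin(n,p) \geq n/2},
\end{equation*}
the probability that a fixed vertex sees at least half of its $n$ neighbours initially infected. The normal approximation to the binomial, combined with a Taylor expansion of the standard normal tail around $p = 1/2$, yields $q \sim (\log n)^{2\lambda-1/2}/\sqrt{2\pi n}$. Comparing to the deficit $\alpha := \sqrt{\log n / n}$ of $p$ below $\frac{1}{2}$, the ratio satisfies $q/\alpha \sim (\log n)^{2\lambda-1}/\sqrt{2\pi}$, which tends to $0$ for $\lambda \leq -2$ and to $\infty$ for $\lambda > 1/2$; this dichotomy is what drives the phase transition.

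\textbf{Upper bound ($\lambda > 1/2$).} I would track the densities $f_i := |{\bf A}_i|/2^n$ round by round. After one round $\Ex{f_1} = p + (1-p)q$, which for $\lambda > 1/2$ exceeds $\frac{1}{2}$ by a margin $\gamma \sim \alpha(\log n)^{2\lambda-1}/2 > 0$; concentration of $f_1$ around its mean follows from Azuma--Hoeffding since $|{\bf A}_1|$ is an $O(n)$-Lipschitz function of the independent coordinates of ${\bf A}_p$. The key combinatorial point is that $Q_n$ has girth four, so for any vertex $v$ the sets $N(w) \setminus \{v\}$ for $w \in N(v)$ are pairwise disjoint; conditional on the status of $v$ in ${\bf A}_p$, the indicators $\mathbb{1}[w \in {\bf A}_1]$ for $w \in N(v)$ are therefore independent Bernoullis with mean close to $\frac{1}{2} + \gamma$. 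A Chernoff bound then yields $\Pr{v \notin {\bf A}_2 \mid v \notin {\bf A}_p} \leq \exp(-\Omega(n \gamma^2))$, and iterating for $O(1)$ further rounds drives the fraction of uninfected vertices below $2^{-n}$, at which point a union bound over vertices completes percolation.

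\textbf{Lower bound ($\lambda \leq -2$).} The plan here is to show that with high probability some vertex remains uninfected at every round. For each $k$, call $v$ \emph{$k$-safe} if $v \notin {\bf A}_k$. Using the same local independence structure as above, $\Pr{v \text{ is } k\text{-safe}}$ decomposes as a product of binomial tail probabilities, and a first-moment computation shows that the expected number of $k$-safe vertices stays exponentially large in $n$ for $k = o(\log n/\log\log n)$, because the per-round survival probability only degrades by a poly-logarithmic factor. A second-moment argument secures existence: the status of $v$ at round $k$ is determined by the initial bits in a ball of radius $k$ around $v$, and in $Q_n$ such balls are disjoint for vertices at distance exceeding $2k$, which covers all but a vanishing fraction of ordered pairs; letting $k \to \infty$ slowly then produces a permanently safe vertex with high probability.

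\textbf{Main obstacle.} The delicate point in both directions is the non-independence of ${\bf A}_i$ for $i \geq 1$; although ${\bf A}_0 = {\bf A}_p$ is a product measure, the later sets are correlated. These correlations are tamed by the girth-$4$ structure of $Q_n$: conditioning on the initial status of $v$ renders the round-$1$ statuses of the vertices in $N(v)$ independent, and the analogous principle at round $i$ requires conditioning on the initial bits in a ball of radius $O(i)$. Tracking precisely how the first-moment and Chernoff estimates degrade from round to round is what fixes the constants $-2$ and $1/2$ bracketing the critical window.
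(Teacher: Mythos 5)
Your asymptotics for $q=\mathbb{P}[\Bin(n,p)\ge n/2]$ are right, and the comparison of $(1-p)q$ against the deficit $\tfrac12-p$ does correctly locate the constant $\tfrac12$ in the $1$-statement. But the central independence claim on which both directions rest is false: in $Q_n$ the girth being four means there \emph{are} four-cycles, so any two neighbours $w,w'$ of $v$ share a second common neighbour besides $v$, and the sets $N(w)\setminus\{v\}$, $w\in N(v)$, are not pairwise disjoint. For round one this is a mild, repairable defect (each pair overlaps in one vertex, and a second-moment computation as in \Cref{l:constant} and \Cref{c:var} of this paper handles it), but for rounds $i\ge 2$ the claim fails badly: the status of $w$ at round $i$ depends on $A_0\cap B(w,i)$, and these balls for distinct $w,w'\in N(v)$ overlap in $\Theta(n^{i-1})$ vertices, which no amount of conditioning on a ball around $v$ removes. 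Manufacturing genuine independence at later rounds is the actual technical content of the proof: one partitions the sphere $S(x,\ell)$ into classes of pairwise distance $\ge 2\ell$ (\Cref{l:partitiondist}) and re-runs the two-round argument inside projection subgraphs disjoint from $B(x,\ell-1)$ (\Cref{c:projection}). Relatedly, your Chernoff step after round two only yields $\exp(-\Theta((\log n)^{4\lambda-1}))$, which is $o(1)$ but nowhere near $2^{-n}$; the passage from this constant-order gain to a super-exponential bound (``iterating for $O(1)$ further rounds'') is where all the work lies and cannot be dismissed in a clause.

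The $0$-statement is the more serious gap. Your own heuristic $q/\alpha\sim(\log n)^{2\lambda-1}$ changes behaviour at $\lambda=\tfrac12$ from both sides, so nothing in the plan explains why the lower constant is $-2$ rather than anything below $\tfrac12$. The constant $-2$ emerges from an entropy-versus-probability computation absent from your sketch: to infect $v$ in round two one needs a \emph{witness set} of $\Theta(\sqrt{n\log n})$ neighbours all infected in round one, and the union bound over the $\binom{n}{\Theta(\sqrt{n\log n})}=\exp(\Theta(\sqrt{n}(\log n)^{3/2}))$ candidate sets must be beaten by $q^{|W|}$; the balance of these two exponentials is what fixes $\lambda=-2$ (compare \Cref{c:0-statement-witness} and \Cref{c:0-statement-prob}). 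Moreover, the scheme ``show a $k$-safe vertex exists for each $k$ and let $k\to\infty$ slowly'' does not prove non-percolation: the process may run for up to $2^n$ rounds, and you need a single vertex that is \emph{never} infected. The standard (and, as far as I know, unavoidable) device is to pass to a dominating process such as $\Boot_2(\gamma)$, in which a vertex infected at round $\ell+1\le k$ must have $\gamma(x)$ neighbours infected at round $\ell$, and to prove that this process \emph{stabilises} after two rounds without filling $V(G)$ (\Cref{l:inA3-A2,l:inA2-A1}); stabilisation of the dominating process is what converts finitely many rounds of control into control of the entire infinite process.
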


Since in this context it makes sense to consider the critical probability as having an `expansion' around $p=\frac{1}{2}$, it will be useful to introduce some definitions so as to be able to talk about the strength of bounds on the location and width of the critical window.

Suppose a property has a critical window of the form $I = p_1+p_2 + \ldots \pm p_k$ where $p_i = o_n(p_{i-1})$ for each $2 \leq i < k$ and $p_k = O_n(p_{k-1})$.
Then, whenever $p_i = o_n(p_{i-1})$ we say that $p_i$ is the \emph{$i$-th term} in the expansion of the critical probability and the \emph{width} of the critical window is at most $p_k$.
In this way, \Cref{t:BBM} determines the first two terms in the expansion of the critical probability to be $p_1=\frac{1}{2}$ and $p_2=-\frac{1}{2}\sqrt{\frac{\log n}{n}}$, and bounds the width of the critical window to be $O\left(\frac{\log\log n}{\sqrt{n \log n}}\right)$ for random majority bootstrap percolation on the $n$-dimensional hypercube.

\subsection{Main Results}
The aim of this paper is to investigate the random majority bootstrap percolation process on \emph{high-dimensional geometric} graphs.
The $n$-dimensional hypercube has many equivalent representations: For example, it is the nearest neighbour graph on the set of points $\{0,1\}^n$; it is the Cayley graph of $\mathbb{Z}_2^n$ with the standard generating set; it is the Cartesian product of $n$ copies of a single edge; and it is the skeleton of a particularly simple convex polytope in $\mathbb{R}^n$ -- all of which in some sense witness the fact that $Q_n$ is high-dimensional.

We will study the majority bootstrap percolation process on other graphs with similar high-dimensional representations.
For example, many natural classes of lattice-like graphs can be realised as Cartesian products of small graphs, such as grids, tori and Hamming graphs, which are the Cartesian products of paths, cycles and complete graphs, respectively.
Such graphs are commonly studied in percolation theory \cite{Ajtai1982ThresholdCube, Diskin2022Products, ErdosSpencer1979Hypercube, Heydenreich2007Tori} and in particular in the context of bootstrap percolation \cite{Bidgoli2021Hamming,KMM24}.

Another well-studied class of high-dimensional graphs are Kneser graphs and bipartite Kneser graphs, which encode how different subsets of a fixed set intersect.
Of particular interest here are the middle layers graph $M_n$ and the odd graph $O_n$ whose combinatorial properties have been extensively studied \cite{B79,M16,KT88}.
The odd graph $O_n$ belongs to the family of \emph{generalised odd graphs} that are related to incidence geometries and in particular to \emph{near polygons}, see \cite{BCN89}.
Another well-known example of a generalised odd graph is the $(n-1)$-dimensional \emph{folded hypercube} $\tilde{Q}_n$ obtained by joining each pair of antipodal points in the hypercube $Q_{n-1}$.
This graph has also commonly been used as a lower-diameter alternative to the hypercube in the context of network topology \cite{EAL91}.
Percolation processes have also been considered on some of these graphs, in the context of the transference of combinatorial results to sparse random substructures \cite{BBN15}.

In this paper we will study a general class $\mathcal{H}=\bigcup_{K \in \mathbb{N}} \mathcal{H}(K)$ of graphs satisfying certain structural properties (see \Cref{s:conditions} for formal definition), which are satisfied by various high-dimensional geometric graphs, and in particular the graphs listed above. Roughly speaking, $\mathcal{H}(K)$ consists of graphs whose structure is \emph{close} -- in a quantitative manner in terms of the parameter $K$ -- to being controlled by some coordinate system, which may only be defined locally with respect to each vertex and need not be globally coherent, where edges correspond to changing a single coordinate.

We will show that there is a certain \emph{universal behaviour} to the random majority bootstrap percolation process on graphs in the class $\mathcal{H}(K)$, which is controlled in some way by the degree sequence, and we will in particular determine the first few terms in the expansion of the critical probability for regular graphs in $\mathcal{H}(K)$.  
In fact, we show  that the critical window does not contain $\frac{1}{2}$, for \emph{arbitrary} graphs $G$ in the class $\mathcal{H}(K)$, where the upper and lower boundary
of the critical window are functions of the minimum and maximum degree of $G$, denoted by $\delta(G)$ and $\Delta(G)$, respectively.

Our main theorem is as follows.
\begin{theorem}\label{t:mainThm}
  Let $(G_n)_{n \in \mathbb{N}}$ be a sequence of graphs in $\mathcal{H}(K)$ for some $K\in \mathbb{N}$ such that $\delta(G_n) \to \infty$ as $n \to \infty$ and let $p=p(n) \in [0,1]$. Consider the random majority bootstrap percolation process where the set of initially infected vertices is a $p$-random subset ${\bf A}_p$. 
  Then for any constant $\eps > 0$,
  \begin{equation*}
    \lim_{n \to \infty} \Phi(p, G_n) = \left\{
    \begin{array} {c@{\quad \textup{if} \quad}l}
      0 & p < \frac{1}{2}-\left(\frac{1}{2}+\eps\right) \sqrt{\frac{\log \delta(G_n)}{\delta(G_n)}}, \\[+1ex]
      1 & p > \frac{1}{2}-\left(\frac{1}{2}-\eps\right) \sqrt{\frac{\log \Delta(G_n)}{\Delta(G_n)}}.
    \end{array}\right.
  \end{equation*}
\end{theorem}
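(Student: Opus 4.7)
The plan follows the blueprint of \cite{BaBoMo2009} for the hypercube case (\Cref{t:BBM}), replacing the global product structure of $Q_n$ with the local coordinate-like substructures guaranteed by membership in $\mathcal{H}(K)$.

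For the 1-statement, we may take $p = \frac{1}{2} - \left(\frac{1}{2} - \eps\right)\sqrt{\frac{\log \Delta(G_n)}{\Delta(G_n)}}$. A Chernoff estimate shows that, for any vertex $v$ of degree $d(v)$, the probability that $v$ has fewer than $d(v)/2$ neighbours in ${\bf A}_p$ is at most
\[
  \exp\!\bigl(-2 d(v)(1/2 - p)^2\bigr) \leq \Delta(G_n)^{-2(1/2-\eps)^2 + o(1)}.
\]
Since $|V(G_n)|$ may be exponential in $\Delta(G_n)$, a naive union bound after a single round will not suffice, so the plan is to iterate: a vertex $v \notin A_t$ must have at least $d(v)/2$ neighbours outside $A_{t-1}$, and by leveraging the local expansion consequences of membership in $\mathcal{H}(K)$ this forces an increasingly improbable pattern of \enquote{bad} vertices in a neighbourhood of $v$. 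After a bounded number of rounds the expected number of uninfected vertices drops below $1$, and percolation follows whp.

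For the 0-statement, suppose $p = \frac{1}{2} - \left(\frac{1}{2} + \eps\right)\sqrt{\frac{\log \delta(G_n)}{\delta(G_n)}}$. The aim is to exhibit an \emph{immune} set $B \subseteq V(G_n) \setminus {\bf A}_p$, i.e.\ one with every $v \in B$ having at least $\lceil d(v)/2 \rceil$ of its neighbours in $B$; a simple induction then gives $A_t \cap B = \emptyset$ for all $t \geq 0$, so percolation fails. I would build candidates for $B$ using the local coordinate system underpinning $\mathcal{H}(K)$: around each vertex $v$ one identifies a small cube-like substructure $C_v$ of some carefully chosen dimension $k$, and one estimates that the event \enquote{$C_v$ avoids ${\bf A}_p$, and every $u \in C_v$ has sufficiently few neighbours of ${\bf A}_p$ outside $C_v$} has probability roughly $\delta(G_n)^{-c(\eps)}$ for some $c(\eps) < 1$. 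The coordinate system supplies up to $|V(G_n)|$ such candidates, and pairs of far-apart candidates yield approximately independent events, so a second-moment argument would force at least one $C_v$ to succeed whp.

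The main difficulty lies in the 0-statement. Obtaining the sharp leading constant $\tfrac12$ in front of the second-order term $\sqrt{\log \delta(G_n)/\delta(G_n)}$ demands a delicate choice of the cube dimension $k$, balancing the probability that $C_v$ avoids ${\bf A}_p$ (which worsens as $|C_v|$ grows) against the slack afforded to each $u \in C_v$ for its external neighbours in ${\bf A}_p$ (which improves with $k$). In addition, the absence of a global product structure in general graphs of $\mathcal{H}(K)$ forces us to replace the symmetry used in the hypercube case by local-only coordinate information, with the resulting errors controlled quantitatively by the parameter $K$. The 1-statement, though also nontrivial, is a more standard iteration driven by neighbourhood expansion, in the spirit of \Cref{thm:BaBoMogeneral}.
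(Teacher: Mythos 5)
Your plan for the $1$-statement starts from a miscalculation that inverts the main difficulty. At $p=\frac12-\bigl(\frac12-\eps\bigr)\sqrt{\log d/d}$ the expected number of initially infected neighbours of a degree-$d$ vertex is $dp<d/2$, so the Chernoff bound you quote controls the probability that $v$ \emph{does} have $d/2$ infected neighbours; the probability that it has \emph{fewer} than $d/2$ is $1-O\bigl(d^{-2(1/2-\eps)^2}\bigr)$, not $d^{-2(1/2-\eps)^2+o(1)}$. In other words, after one round the typical vertex is \emph{not} infected, and your proposed iteration (``$v\notin A_t$ forces an increasingly improbable pattern'') has no small base probability to amplify. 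The missing idea --- and the source of the constant $\frac12$ --- is an \emph{anti-concentration} step: each neighbour $y$ of $v$ is infected in round one with probability $\Theta\bigl(d^{-2c^2}/\sqrt{\log d}\bigr)$ (by the normal approximation, not by a Chernoff upper bound), so the expected number of round-one infections in $N(v)$ is $\Theta\bigl(d^{1-2c^2}/\sqrt{\log d}\bigr)$, which exceeds the deficit $c\sqrt{d\log d}$ exactly when $c<\frac12$. A second-moment argument, using the bounded number of common neighbours guaranteed by membership in $\mathcal{H}(K)$, then shows $v$ is infected by round two with probability at least $\frac34$, and only from this base case can one bootstrap to exponentially and then super-exponentially small failure probabilities over a bounded number of further rounds before applying the union bound over $\exp(K\delta(G_n))$ vertices.

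For the $0$-statement the immune-set approach cannot work as described. A set $B$ with $B\cap{\bf A}_p=\emptyset$ in which every vertex has at least half of its neighbours inside $B$ induces a subgraph of minimum degree about $d/2$; already in the hypercube $Q_n\in\mathcal{H}(2)$ any such set has at least $2^{n/2}$ vertices, so the probability that a fixed candidate avoids ${\bf A}_p$ is of order $2^{-2^{n/2}}$ and even the first moment over all $2^n$ candidates vanishes --- no second-moment argument can rescue this. Conversely, a small cube-like $C_v$ (of size $O(\log d)$, as your claimed probability $\delta(G_n)^{-c(\eps)}$ would require) avoiding ${\bf A}_p$ and having few initially infected external neighbours is \emph{not} immune: infection can reach it through vertices that become infected only in later rounds, and excluding this requires controlling the dynamics far from $C_v$. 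The paper's route is genuinely different: it analyses a dominating process in which the threshold $d(v)/2$ is lowered by $2\gamma(v)$, $\gamma(v)$, $0$ in rounds $1,2,3$ with $\gamma(v)=\sqrt{d(v)/\sqrt{\log d(v)}}$, so that any vertex infected in round two or three must have at least roughly $\gamma$ neighbours infected in the previous round; a witness-counting argument shows this process whp stabilises after two rounds, and a first-moment computation shows it has not percolated by then. Some stabilisation argument of this kind appears unavoidable; the static fortress construction is not a viable substitute.
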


In particular, if the graphs $G_n$ are regular, then \Cref{t:mainThm} determines the first two terms in the expansion of the critical probability, as in \Cref{t:BBM} for the hypercube.

\begin{corollary}\label{c:mainThm}
  Let $(G_n)_{n \in \mathbb{N}}$ be a sequence of $d$-regular graphs in $\mathcal{H}(K)$ for some $K \in \mathbb{N}$ such that $d=d(n) \to \infty$ as $n \to \infty$ and let $p=p(n) \in [0,1]$.  Consider the random majority bootstrap percolation process where the set of initially infected vertices is a $p$-random subset ${\bf A}_p$.  
  Then for any constant $\eps > 0$,
  \begin{equation*}
    \lim_{n \to \infty} \Phi(p, G_n) = \left\{
    \begin{array} {c@{\quad \textup{if} \quad}l}
      0 & p < \frac{1}{2}-\left(\frac{1}{2}+\eps\right) \sqrt{\frac{\log d}{d}}, \\[+1ex]
      1 & p > \frac{1}{2}-\left(\frac{1}{2}-\eps\right) \sqrt{\frac{\log d}{d}}.
    \end{array}\right.
  \end{equation*}

  In particular, this holds if $G_n$ is
  \begin{itemize}
    \item an $n$-dimensional regular Cartesian product graph whose base graphs have bounded size;
    \item the $n$-dimensional middle layer graph $M_n$;
    \item the $n$-dimensional odd graph $O_n$; or
    \item the $n$-dimensional folded hypercube $\tilde{Q}_{n}$.
  \end{itemize}
\end{corollary}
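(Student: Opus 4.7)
The first statement of the corollary follows immediately from \Cref{t:mainThm}: in the $d$-regular case we have $\delta(G_n) = \Delta(G_n) = d$, so the two sides of the critical window collapse to $p \lessgtr \frac{1}{2} - \left(\frac{1}{2} \mp \eps\right)\sqrt{\log d / d}$, exactly as claimed. The substantive work is therefore to verify that each of the four listed graph families is contained in $\mathcal{H}(K)$ for some constant $K$ independent of $n$.

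The plan is to tackle each family by explicitly exhibiting the local coordinate systems demanded by the definition in \Cref{s:conditions}. For a $d$-regular $n$-dimensional Cartesian product $G = H_1 \car \cdots \car H_n$ with $|V(H_i)|$ uniformly bounded, the coordinates are already global and coherent: a vertex is a tuple $(v_1,\ldots,v_n)$, and its neighbours are obtained by replacing a single $v_i$ by a neighbour in $H_i$. The required structural bounds then reduce to standard enumerative facts about short paths in Cartesian products, with the parameter $K$ controlled uniformly by the sizes of the base graphs. For the folded hypercube $\tilde{Q}_n$, the plan is to inherit the global coordinates of $Q_{n-1}$ and add a single "antipodal" coordinate at each vertex; since this is a single uniform perturbation, the quantitative bounds remain satisfied at the cost of a constant inflation of $K$.

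The cases of the middle layer graph $M_n$ and the odd graph $O_n$ are more delicate, since no globally coherent coordinate system exists. Here the approach is to use the set-theoretic description of each graph: a vertex $v$ is a subset of $[N]$ (with $N$ linear in $n$) of a prescribed size, and each edge incident to $v$ corresponds canonically to an element of $[N]$ which is added, removed, or swapped, depending on the model. This bijection between $N(v)$ and a sub-family of $[N]$ serves as the local coordinate system at $v$. To establish membership in $\mathcal{H}(K)$, one then checks that the coordinate systems at two adjacent vertices $u,v$ agree on all but a bounded number of coordinates -- essentially because $u$ and $v$ share a codimension-one subset in their set description -- and that the number of short cycles and the overlap of neighbourhoods of close vertices are uniformly bounded in a quantitative manner.

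The hardest part will be the verification of the quantitative axioms of $\mathcal{H}(K)$ for the Kneser-type families $M_n$ and $O_n$, in particular those governing how the local coordinate systems drift along short walks and how large the intersection of neighbourhoods of two close vertices can be. I expect these to reduce to routine but careful enumerative calculations on families of subsets of $[N]$, using standard combinatorial identities for intersecting set systems, but packaging these bounds into the precise form required by the definition of $\mathcal{H}(K)$ is the part of the proof that genuinely requires care.
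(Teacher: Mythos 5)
Your deduction of the displayed dichotomy from \Cref{t:mainThm} is exactly the paper's: in the $d$-regular case $\delta(G_n)=\Delta(G_n)=d$, so the two bounds of the theorem collapse to the claimed ones. Your treatment of Cartesian products likewise matches \Cref{l:propproduct}, and your plan for the folded hypercube is close in spirit to the paper's (which transfers the local axioms via the local isomorphism $B_{\tilde{Q}_n}(x,\ell)\cong B_{Q_n}(y,\ell)$ for $\ell<\lfloor n/2\rfloor$ rather than by perturbing coordinates directly). The problems are in the Kneser-type families.

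First, your stated mechanism for the odd graph is wrong. In $O_n=K(2n-1,n-1)$ two vertices are adjacent precisely when they are \emph{disjoint} $(n-1)$-sets, so adjacent vertices do not ``share a codimension-one subset in their set description''; their set descriptions are as far apart as possible. A coordinate system built on shared elements of adjacent vertices cannot get off the ground here. The paper's fix is an alternating-complementation argument: mapping vertices at even distance from $x$ to their indicator vectors and vertices at odd distance to the indicators of their complements gives an isomorphism $B_{O_n}(x,\ell)\cong B_{M_n}(x,\ell)$ for $\ell<n-1$, and the local axioms (\Cref{c:regular,c:backwards,c:localconnection,c:partitioncond}) then transfer from $M_n$, which in turn inherits them as an isometric subgraph of $Q_{2n-1}$. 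Some version of this idea is indispensable for $O_n$.

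Second, and more fundamentally, your plan never engages with \Cref{c:projection}, the recursive self-similarity axiom: for every $y\in S(x,\ell)$ one must exhibit a subgraph $G(y)$ that contains $y$, is disjoint from $B(x,\ell-1)$, has all degrees within $K\ell$ of those in $G$, and itself lies in $\mathcal{H}(K)$. This is not a ``routine enumerative calculation on families of subsets''; it is precisely the axiom the paper identifies (in \Cref{s:discussion}) as the obstruction for general Kneser and Johnson graphs, where the degrees of natural projections drop too fast. For $M_n$ one fixes a balanced set of coordinates to land in a copy of $M_{n-\ell/2}$; for $O_n$ one must choose auxiliary sets $a'\subseteq x\cap y$ and $b'\subseteq[2n-1]\setminus(x\cup y)$ and check that the induced subgraph on the resulting vertex set is isomorphic to $M_{n-\ell}$ and avoids $B(x,\ell-1)$. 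Without carrying out this construction (and without \Cref{c:partitioncond,c:bound}, which you also omit), membership of $M_n$ and $O_n$ in $\mathcal{H}(K)$ --- and hence the corollary for these families --- is not established.
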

Note that $M_n$, $O_n$ and $\tilde{Q}_{n}$ are $n$-regular. 

\Cref{c:mainThm} shows that for \emph{$d$-regular} graphs in $\mathcal{H}(K)$ there is a sharp percolation threshold for the random majority bootstrap percolation process. More precisely, we find that the behaviour in \Cref{t:BBM}, when scaled correctly, is in some way \emph{universal} for these high-dimensional graphs, in that the first two terms in the expansion of the critical probability are 
$\frac{1}{2}-\frac{1}{2} \sqrt{\frac{\log d}{d}}$ and the critical window has width $o\left(\sqrt{\frac{\log d}{d}}\right)$: In other words, for any   \emph{$d$-regular} graph $G$ in $\mathcal{H}(K)$, the critical 
probability satisfies
$$ p_c(G) = \frac{1}{2}-\frac{1}{2} \sqrt{\frac{\log d}{d}} + o\left(\sqrt{\frac{\log d}{d}}\right).$$

\subsection{Proof Techniques}

In \cite{BaBoMo2009} Balogh, Bollob\'{a}s and Morris show that, at least in the $n$-dimensional hypercube, the event that a vertex becomes infected is in some sense `locally determined'.
Indeed, for the $1$-statement they show that with probability tending to one as $n \to \infty$ (whp) every vertex becomes infected after at most eleven rounds.
For the $0$-statement they analyse a related process, which stochastically dominates the majority bootstrap percolation process, and show that whp this process stabilises after three rounds, and in fact whp a positive proportion of the vertices remains uninfected.
Note that whether or not a vertex $v$ is infected after $k$ rounds only depends on the set of initially infected vertices at distance at most $k$ from $v$.

A key tool in our proof of \Cref{t:mainThm} is that the degree distributions of graphs in $\mathcal{H}(K)$ are locally quite `flat', in the sense that the degrees in a small neighbourhood of a vertex $v$ are relatively close to the degree of $v$.
Roughly, such graphs look \emph{locally approximately regular}.
If, as in the work of Balogh, Bollob\'{a}s and Morris \cite{BaBoMo2009}, we expect the probability that a vertex $v$ becomes infected to be `locally determined', we might hope that for each vertex $v$ there is a critical probability $\tilde p_c(v)$, which depends only on its degree $d(v)$, such that significantly above this threshold it is very likely that $v$ becomes infected, whereas significantly below this threshold there is a positive probability that $v$ remains uninfected, at least for the first few rounds of the process.
In particular, we should expect the process to percolate when $p \gg \max_{v \in V(G)} \tilde p_c(v)$, and we should expect the process not to percolate when $p \ll \min_{v \in V(G)} \tilde p_c(v)$.

Let us describe in more detail how we make this heuristic argument precise.
For the $1$-statement we note that, since we are considering probabilities close to $\frac{1}{2}$, the probability that a vertex is not infected in the first round is roughly $\frac{1}{2}$.
We first show that, for each $v \in V(G)$ when $p$ is above $\tilde p_c(v):= \frac{1}{2}-\frac{1}{2}\sqrt{\frac{\log d(v)}{d(v)}}$, the probability that $v$ is \emph{not infected} after two rounds is already much smaller, at most $\frac{1}{4}$ (\Cref{l:constant}).

We then bootstrap this result twice, first to show that the probability that $v$ is not infected after five rounds is exponentially small in $d(v)$ (\Cref{l:dimred}), and then again to show that the probability that $v$ is not infected after eleven rounds is super-exponentially small in $d(v)$ (\Cref{l:super-exponential}).
Since we assume that the graphs in $\mathcal{H}(K)$ are not too large, roughly of exponential order, this is sufficient to deduce the $1$-statement using a union bound.
To avoid dependencies when bootstrapping our results, and to simplify our analysis, we actually analyse a slightly more restrictive bootstrap percolation process, which is dominated by the original process, where the local infection parameter $r(v)$ is slightly larger than $\frac{d(v)}{2}$. This broad strategy for proving the 1-statement follows the approach of~\cite{BaBoMo2009}.

A key property of graphs in $\mathcal{H}(K)$ which allows us to bootstrap these results is a certain ``fractal self-symmetry'' which roughly says we can split those graphs into many copies of (lower-dimensional) graphs having the same structural properties.
This property has also been key to the study of bond and site percolation on high-dimensional graphs \cite{CDE24,DK23, Diskin2022Products}.

For the $0$-statement, we analyse a slight variant of the majority bootstrap process which dominates the original process, where the infection parameter $r$ varies over time, starting slightly below $\frac{d(v)}{2}$ and increasing to $\frac{d(v)}{2}$ in a finite number of rounds. This idea, which originates in \cite{BaBoMo2009}, allows us to assume that vertices which become infected in round $i$ for $i > 1$ have a significant number of neighbours which became infected in round $i-1$. We first show that in this new process, a vertex which is not infected by the second round is extremely unlikely to become infected in the third round, and so whp this process stabilises after two rounds (\Cref{l:inA3-A2}). Here, a judicious choice of parameterisation simplifies the analysis of this process, allowing us to conclude that the process stabilises after only two rounds.
We then conclude by showing that whp some vertices are not infected by the second round (\Cref{l:inA2-A1}).

The paper is structured as follows:
In \Cref{s:preliminiaries} we introduce some notation and probabilistic tools that we use.
In \Cref{s:conditions} we explicitly describe the structural assumptions satisfied by our class  $\mathcal{H}(K)$ of high-dimensional graphs.
The proof of \Cref{t:mainThm} is then split up into a proof of the $1$-statement in \Cref{s:1-statement} and a proof of the $0$-statement in \Cref{s:0-statement}, with some of the technical details deferred to Sections \ref{sec:11suffice} and \ref{sec:dominating}.
Afterwards we give specific examples of several graphs that are contained in the class $\mathcal{H}$ of graphs in Section \ref{s:examples}.
We close with a discussion of the limits of the techniques used and some open questions in \Cref{s:discussion}.

\section{Preliminaries}\label{s:preliminiaries}

For $n \in \mathbb{N}$ we denote by $[n]$ the set of integers up to $n$, i.e., $[n]:= \{1, \dots, n\}$.
Given $y,z \in \mathbb{R}$ we will write $ y \pm z$ to denote the interval $[y-z,y+z]$. Similar to $O(\cdot)$ notation, an inclusion is meant whenever $\pm$ appears in the context of an equality. For example, $x = y \pm z$ means that $x \in [y-z, y+z]$. 
Whenever the base of a logarithm is not specified, we use the natural logarithm, i.e., $\log x = \ln x =\log_e x$. For ease of presentation we will omit floor and ceiling signs in calculations.


\subsection{Probabilistic tools}

In this section we will state a number of probabilistic tools which are used throughout the paper.

We will assume knowledge of Markov's and Chebyshev's inequalities as standard (see~\cite{Alon2016Book} for basic probabilistic background).
However, through the paper we will need much more precise tail bounds, both from above and below, for binomial and related distributions.
The first is a standard form of the Chernoff bounds, see for example~\cite[Appendix A]{Alon2016Book}.
\begin{lemma}\label{l:Chernoff}
  Let $d \in \mathbb{N}$, $0 < p < 1$, and $X \sim \Bin(d,p)$. Then
  \begin{enumerate}[$(a)$]
    \item For every $t \geq 0$,
      \[ \Pr[\big]{|X - dp| \geq t} \; \le \; 2\exp\left( -\frac{2t^2}{d} \right); \]
    \item\label{i:bigtail} For every $b \geq 1$,
      \[ \Pr{X \geq bdp}  \leq \left(\frac{e}{b}\right)^{bdp}. \]
  \end{enumerate}
\end{lemma}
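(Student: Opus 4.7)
The plan is to apply the exponential moment method: for any $\lambda > 0$, Markov's inequality gives $\Pr{X \ge s} \leq e^{-\lambda s}\,\Ex{e^{\lambda X}}$, and the two parts of the lemma correspond to two different ways of controlling and optimising the moment generating function of the binomial distribution. Writing $X = \sum_{i=1}^d X_i$ as a sum of i.i.d.\ Bernoulli$(p)$ random variables, independence factorises the moment generating function into a product, which is the starting point for both bounds.

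For part $(a)$, I would first establish Hoeffding's lemma: for a mean-zero random variable $Y$ supported in an interval of length at most $1$, $\Ex{e^{\lambda Y}} \le e^{\lambda^2/8}$. The standard derivation convex-combines $e^{\lambda \cdot}$ at the two endpoints of the support and then Taylor-expands the resulting cumulant generating function around $\lambda = 0$, where the second derivative is bounded by $1/4$. Applying this to the centred increments $Y_i = X_i - p$, which lie in $[-p, 1-p]$, gives $\Ex{e^{\lambda(X-dp)}} \le e^{d\lambda^2/8}$, and hence $\Pr{X - dp \ge t} \le e^{-\lambda t + d\lambda^2/8}$. Optimising at $\lambda = 4t/d$ yields the one-sided bound $e^{-2t^2/d}$, and the analogous lower-tail argument together with a union bound produces the factor of $2$.

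For part $(b)$, I would directly compute $\Ex{e^{\lambda X}} = (1 - p + p e^\lambda)^d \le \exp\!\bigl(dp(e^\lambda - 1)\bigr)$ using $1 + x \le e^x$. Markov's inequality then gives $\Pr{X \ge bdp} \le \exp\!\bigl(dp(e^\lambda - 1) - \lambda bdp\bigr)$. Taking $\lambda = \log b$, which is non-negative since $b \ge 1$, simplifies the right-hand side to $\exp\!\bigl(dp(b - 1 - b\log b)\bigr) = e^{-dp}(e/b)^{bdp} \le (e/b)^{bdp}$, as desired.

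The main obstacle is fairly minor: both parts are classical Chernoff/Hoeffding computations, so the substantive work is just carrying out the optimisation in $\lambda$ carefully enough to pin down the right constants in the exponents. Since the statement is standard textbook material and is already cited to~\cite{Alon2016Book}, no genuinely new idea is needed, and a full written proof can reasonably be omitted from the paper.
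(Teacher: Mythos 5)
Your proposal is correct: both optimisations check out (the exponent $-\lambda t + d\lambda^2/8$ is minimised at $\lambda = 4t/d$ giving $-2t^2/d$, and $\lambda = \log b$ gives $\exp(dp(b-1-b\log b)) = e^{-dp}(e/b)^{bdp} \le (e/b)^{bdp}$). The paper does not prove this lemma at all, simply citing~\cite{Alon2016Book}, and your argument is exactly the standard exponential-moment derivation found there, so you are in agreement with the paper's (implicit) approach.
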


At times we will also need an anti-concentration result for binomial random variables.
As these converge in distribution to a normal distribution we will derive these from tail bounds for the standard normal distribution $N(0, 1)$.
Proofs can be found in \cite[Section 5.6]{Asymptopia}.

\begin{lemma}\label{l:CLT}
  For $d \in \mathbb{N}$, $p=p(d)\in (0,1)$ and $f(d) = o(d^{1/6})$, it holds that
  \[\Pr*{\frac{\Bin(d, p)-d p}{\sqrt{dp(1-p)}} \geq f(d)} = (1+o(1)) \Pr*{N(0, 1) \geq f(d)}. \] Moreover, if $f(d) \to \infty$ as $d \to \infty$, the probability that the standard normal distribution exceeds $f(d)$ satisfies
  \[\Pr{N(0,1) \geq f(d)} = \frac{1+o(1)}{f(d) \sqrt{2 \pi}} \exp\left(-\frac{f(d)^2}{2}\right).\]
\end{lemma}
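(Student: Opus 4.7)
The plan is to handle the two asserted estimates separately: the second, a classical Mills-ratio bound for the standard normal, and the first, a moderate-deviation principle for the binomial obtained via a uniform local limit theorem.

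For the Gaussian tail I would use repeated integration by parts. Writing $\phi(t) = (2\pi)^{-1/2} e^{-t^2/2}$ and exploiting $-\phi'(t) = t\phi(t)$, one obtains
\[
\int_x^\infty \phi(t)\,dt = \frac{\phi(x)}{x} - \int_x^\infty \frac{\phi(t)}{t^2}\,dt,
\]
and a further integration by parts bounds the remainder by $O(\phi(x)/x^3)$. Since $f(d) \to \infty$, this yields $\Pr{N(0,1) \geq f(d)} = (1+o(1)) \phi(f(d))/f(d)$, which is the claim.

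For the moderate-deviation claim, I would derive the estimate from a uniform local limit theorem for the binomial. For $k = dp + x\sqrt{dp(1-p)}$ with $0 \leq x \leq f(d)$, applying Stirling's formula to $\binom{d}{k} p^k (1-p)^{d-k}$ gives
\[
\Pr{\Bin(d,p) = k} = \frac{1 + o(1)}{\sqrt{2\pi dp(1-p)}}\exp\bigl(-d\, H(k/d \,\|\, p)\bigr),
\]
where $H(q \,\|\, p) = q\log(q/p) + (1-q)\log((1-q)/(1-p))$ is the relative entropy. Taylor expanding around $q = p$ gives $d\,H(p + \delta \,\|\, p) = \frac{d\delta^2}{2p(1-p)} + O(d|\delta|^3)$; substituting $\delta = x\sqrt{p(1-p)/d}$ turns the exponent into $x^2/2 + O(x^3/\sqrt{d})$. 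Under the hypothesis $f(d) = o(d^{1/6})$, this cubic remainder is $o(1)$ uniformly over the range of $x$, so one gets the uniform local estimate $\Pr{\Bin(d,p) = k} = (1+o(1))\phi(x)/\sqrt{dp(1-p)}$. Summing over $k$ with $x \in [f(d), \omega(1)\cdot d^{1/6}]$ produces a Riemann approximation, with mesh $1/\sqrt{dp(1-p)} \to 0$, of the tail integral of $\phi$; the residual contribution from $x = \omega(d^{1/6})$ is handled by the Chernoff bound in \Cref{l:Chernoff}\eqref{i:bigtail}, which shows it is super-exponentially smaller than $\phi(f(d))/f(d)$ and is absorbed into the $(1+o(1))$ factor.

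The main obstacle is the uniform control of the cubic error term in the Taylor expansion of the relative entropy throughout the full summation range, together with keeping the Stirling error $(1+o(1))$ uniformly in $k$. The threshold $f(d) = o(d^{1/6})$ in the hypothesis is precisely the regime at which the cubic correction becomes negligible in the exponent; pushing beyond it would require explicitly subtracting a third-order Edgeworth term, so carefully managing this single term is the crux of the argument.
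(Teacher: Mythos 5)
The paper does not prove this lemma itself --- it only cites \cite[Section 5.6]{Asymptopia} --- and your argument is precisely the standard proof that the citation points to: a uniform local limit theorem via Stirling and the relative-entropy expansion, summed as a Riemann approximation of the Gaussian tail, with the far tail killed by Chernoff, plus the Mills-ratio integration by parts for the normal tail. The argument is correct as applied in this paper. One caveat worth recording: both the lemma as stated and your proof implicitly require $p$ to be bounded away from $0$ and $1$ (or at least $dp(1-p)=\Theta(d)$); for, say, $p=1/d$ the normal approximation fails outright, and in general the cubic entropy error is $O\bigl(d|\delta|^3/\min(p,1-p)^2\bigr)$ rather than $O(d|\delta|^3)$, so the threshold becomes $f(d)=o\bigl((dp(1-p))^{1/6}\bigr)$. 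Since every application in the paper has $p\to\frac{1}{2}$, this does not affect anything, but your claim of uniformity ``for $p=p(d)\in(0,1)$'' should carry that restriction.
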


Since binomial random variables can be written as sums of independent Bernoulli random variables, the standard inequality due to Hoeffding~\cite{Hoeffding1963} readily implies the following lemma.
\begin{lemma}\label{l:sumBin}
  Let $k,d_1,\ldots,d_k \in \mathbb{N}$ and $p \in (0,1)$.
  Let $X_i \sim \Bin(d_i,p)$ for each $i \in [k]$, let $Y = \sum_{i=1}^k iX_i$, and let $D(k) = \sum_{i=1}^k i^2 d_i$.
  Then, for every $\tau > 0$,
  \[\Pr[\big]{Y \ge \Ex{Y} + \tau} \le \exp\left( - \frac{2\tau^2}{D(k)} \right) \le \exp\left( - \frac{2\tau^2}{k\cdot\Ex{Y}} \right).\]
\end{lemma}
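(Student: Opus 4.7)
The plan is to obtain this as a direct consequence of Hoeffding's inequality for sums of bounded independent random variables. First, I would rewrite each binomial as a sum of Bernoullis: $X_i = \sum_{j=1}^{d_i} B_{i,j}$, where all $B_{i,j}$ are mutually independent and distributed as $\textup{Bernoulli}(p)$. This gives
\[
  Y \;=\; \sum_{i=1}^{k} i\, X_i \;=\; \sum_{i=1}^{k} \sum_{j=1}^{d_i} i\cdot B_{i,j},
\]
expressing $Y$ as a sum of $\sum_i d_i$ mutually independent random variables, with each term $i\cdot B_{i,j}$ taking values in $\{0,i\}$ and hence having range exactly $i$.

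Hoeffding's inequality for independent variables $Z_m \in [a_m,b_m]$ and $\tau>0$ states that $\Pr*{\sum_m Z_m - \Ex*{\sum_m Z_m} \ge \tau} \le \exp\bigl(-2\tau^2 / \sum_m (b_m-a_m)^2\bigr)$; applied to the decomposition above, the sum of squared ranges is exactly
\[
  \sum_{i=1}^{k} \sum_{j=1}^{d_i} i^2 \;=\; \sum_{i=1}^{k} i^2 d_i \;=\; D(k),
\]
which is precisely the first claimed bound. For the second inequality, the pointwise bound $i^2 \le k\cdot i$ (valid since $i \le k$) yields $D(k) \le k\sum_{i=1}^{k} i\, d_i$; identifying $\sum_{i=1}^{k} i\,d_i$ with $\Ex{Y}$ via linearity of expectation ($\Ex{Y} = p\sum_{i=1}^k i\, d_i$) and invoking monotonicity of $t\mapsto\exp(-2\tau^2/t)$ then finishes the proof.

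I do not anticipate any real obstacle: the lemma is a routine packaging of the textbook Hoeffding inequality for the particular weighted sum $Y$ that will appear later in the paper. The only care needed is to track the ranges of the Bernoulli summands correctly so that the denominator in the Hoeffding exponent lands precisely on $D(k)$ rather than on a weaker quantity such as $k^2 \sum_i d_i$.
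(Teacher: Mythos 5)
Your Bernoulli decomposition plus Hoeffding is exactly the route the paper intends (the paper offers nothing beyond the one-sentence remark preceding the lemma), and your derivation of the first bound is correct: the summands $i\cdot B_{i,j}$ have squared ranges summing to precisely $D(k)$, giving $\exp(-2\tau^2/D(k))$.

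Your justification of the second inequality, however, has a genuine gap. From $i^2\le ki$ you correctly obtain $D(k)\le k\sum_{i=1}^k i\,d_i$, but $\sum_{i=1}^k i\,d_i$ is \emph{not} $\Ex{Y}$; as you yourself record, $\Ex{Y}=p\sum_{i=1}^k i\,d_i$, so $\sum_{i=1}^k i\,d_i=\Ex{Y}/p\ge\Ex{Y}$. Your computation therefore yields only $D(k)\le k\Ex{Y}/p$, i.e.\ the bound $\exp\bigl(-2p\tau^2/(k\Ex{Y})\bigr)$, and monotonicity runs in the wrong direction to upgrade this to $\exp\bigl(-2\tau^2/(k\Ex{Y})\bigr)$. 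Indeed, the inequality $D(k)\le k\Ex{Y}$ that the second bound requires is false in general: for $k=d_1=1$ one has $D(1)=1>p=k\Ex{Y}$. So the issue lies partly with the lemma as stated, not only with your write-up; the clean statement is either $\exp\bigl(-2p\tau^2/(k\Ex{Y})\bigr)$ or $\exp\bigl(-2\tau^2/(k\sum_i i\,d_i)\bigr)$. In the paper's one use of the second form (in the proof of \Cref{c:Tsteps13}, where $p=\tfrac13$ and $k=3K$) the missing factor of $p$ is a constant that the surrounding slack absorbs, but as written your step ``identifying $\sum_i i\,d_i$ with $\Ex{Y}$'' does not go through.
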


We will also need the following lemma from \cite{BaBoMo2009} which broadly tells us that, if we think of a $\Bin(n,p)$ random variable as the number of successful trials in a sequence of $n$ independent trials with success probability $p$, then there is only a slight correlation between the result and the outcome of the first trial. 

\begin{lemma}[\cite{BaBoMo2009}, Lemmas 3.8 and 3.9]\label{l:Binvariant}
Let $d \in \mathbb{N}$, let $p =p(d)= \frac{1}{2} - o(1)$. Let $X_1,\ldots, X_d$ be independent and identically distributed $\textup{Ber}(p)$ random variables and let $X=\sum_{i=1}^d X_i \sim \Bin(d, p)$.
  Then, for any $0 \le m = m(d) \le d/2$,
  \[\Pr*{X \ge m \given X_1=1} = \big( 1 + o(1) \big)\,\Pr[\big]{X \ge m}\]
  and
  \[ \Pr*{X_1 = 1 \given X \ge m} = \big(1 + o(1)\big)\,\Pr[\big]{X_1 = 1}.\]
\end{lemma}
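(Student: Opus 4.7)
The plan is to observe that the two displayed equalities are equivalent by Bayes' formula: since
$\Pr{X_1 = 1, X \ge m} = \Pr{X \ge m \given X_1 = 1}\,\Pr{X_1 = 1} = \Pr{X_1 = 1 \given X \ge m}\,\Pr{X \ge m}$, either identity determines the other, so it suffices to prove the first.

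Write $X = X_1 + Y$, where $Y := \sum_{i=2}^{d} X_i \sim \Bin(d-1, p)$ is independent of $X_1$. Conditioning on $X_1$ gives the two exact identities
\begin{align*}
\Pr{X \ge m \given X_1 = 1} &= \Pr{Y \ge m - 1}, \\
\Pr{X \ge m} &= (1-p)\,\Pr{Y \ge m} + p\,\Pr{Y \ge m - 1}.
\end{align*}
The difference between them equals $(1-p)\bigl(\Pr{Y \ge m-1} - \Pr{Y \ge m}\bigr) = (1-p)\,\Pr{Y = m-1}$, so the ratio of the two sides of the first claim equals $1$ plus $(1-p)\,\Pr{Y = m-1}/\Pr{X \ge m}$. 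It therefore suffices to establish the single point-mass bound
\[
\Pr{Y = m - 1} = o\bigl(\Pr{Y \ge m - 1}\bigr).
\]

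I would split this into two regimes according to where $m$ sits relative to the mean $(d-1)p$. In the easy regime, where $m$ is at most (say) $\log d$ above the mean, Chebyshev's inequality (or the Chernoff bound from \Cref{l:Chernoff}) gives $\Pr{Y \ge m-1} = \Omega(1)$, while the elementary estimate $\max_k \Pr{Y = k} = O(1/\sqrt{d})$ makes the bound immediate as $d \to \infty$. In the more delicate regime where $m$ lies $z$ standard deviations above the mean with $z \to \infty$ but still $m \le d/2$, the hypothesis $p = \frac{1}{2} - o(1)$ forces $m - (d-1)p \le d \cdot o(1)$, and hence $z = o(\sqrt{d})$. In this range I would apply the local CLT, essentially as in \Cref{l:CLT} with an appropriate choice of $f(d)$, together with the explicit recursion $\Pr{Y = k}/\Pr{Y = k-1} = (d-k)p/\bigl(k(1-p)\bigr)$, which shows that consecutive point masses near $m$ decay by a factor $1 - \Theta(z/\sqrt{d}) = 1 - o(1)$. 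A geometric-sum estimate then yields that $\Theta(\sqrt{d}/z) = \omega(1)$ terms contribute comparably to the tail, giving $\Pr{Y = m-1}/\Pr{Y \ge m-1} = O(z/\sqrt{d}) = o(1)$.

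The main obstacle is this second regime: if one drops the constraint $m \le d/2$ and allows $m$ to be many standard deviations above the mean, the tail of $Y$ is dominated by its leading term and the desired estimate fails outright. So the argument must precisely combine the constraint $m \le d/2$ with the hypothesis $p = \frac{1}{2} - o(1)$ to keep $z$ in the window $o(\sqrt{d})$ where the local CLT produces a slowly decaying geometric profile. Once the point-mass bound is secured, both statements of the lemma follow, the second by the Bayes reformulation noted at the start.
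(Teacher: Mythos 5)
Your argument is correct. Note that the paper itself gives no proof of this lemma --- it is imported verbatim from \cite{BaBoMo2009} (Lemmas 3.8 and 3.9) --- so there is no in-paper proof to compare against; what you have written is a valid self-contained replacement. The two reductions you make are exactly right: the Bayes identity shows the two displayed claims are equivalent, and the decomposition $X = X_1 + Y$ with $Y \sim \Bin(d-1,p)$ reduces everything to the single point-mass estimate $\Pr{Y = m-1} = o\bigl(\Pr{Y \ge m-1}\bigr)$ (which suffices since $\Pr{X \ge m} \ge p\,\Pr{Y \ge m-1}$ and $p \to 1/2$). Your regime split is also where the hypotheses genuinely enter: the constraint $m \le d/2$ together with $p = \tfrac12 - o(1)$ forces the deviation $m - (d-1)p$ to be $o(d)$, i.e.\ $o(\sqrt d)$ standard deviations, and in that window the ratio $\Pr{Y=k}/\Pr{Y=k-1} = (d-k)p/\bigl(k(1-p)\bigr) = 1 - O(z/\sqrt d)$ stays uniformly close to $1$ over the $\Theta(\sqrt d / z)$ indices following $m-1$, so the tail is $\omega(1)$ times the leading term. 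The only piece left as a sketch is that uniformity --- you should note that summing over the first $\eps\sqrt d/z$ indices past $m-1$ moves $k$ by $o(\sigma)$, so the per-step ratio remains $1 - O(z/\sqrt d)$ throughout and the product of ratios stays bounded below --- but this is routine. Your closing remark correctly identifies why the constraint $m \le d/2$ cannot be dropped.
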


 By induction and the law of total probability, it follows from \Cref{l:Binvariant} that for any bounded $k\in \mathbb N$ and any $\varepsilon=(\varepsilon_1, \ldots, \varepsilon_k) \in \{0, 1\}^k$,
 \[ \Pr[\big]{X \ge m \given X_1 = \varepsilon_1, \ldots, X_k = \varepsilon_k} = \big( 1 + o(1) \big)\,\Pr[\big]{X \ge m}. \]
 
 Finally we will need the fact that the median of a binomial random variable is essentially equal to its mean, see for example \cite[Theorem 1]{Kaas1980MedianBin}.
\begin{proposition}\label{p:MedianBin}
  Let $X \sim \Bin(d, p)$.
  Then the median of $X$ is either $\lfloor\Ex{X}\rfloor$ or $\lceil\Ex{X} \rceil$, and thus $\Pr{X \geq C} \geq \frac{1}{2}$ for any integer $C$ such that $C \leq \Ex{X}$.
\end{proposition}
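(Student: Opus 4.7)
The statement comprises two parts, and the second is an immediate consequence of the first. Indeed, if the median $M$ is either $\lfloor \Ex{X} \rfloor$ or $\lceil \Ex{X} \rceil$ and $C \leq \Ex{X}$ is an integer, then $C \leq \lfloor \Ex{X} \rfloor \leq M$, and so the defining property of the median gives $\Pr{X \geq C} \geq \Pr{X \geq M} \geq 1/2$. Thus the core task is to locate the median within $\{\lfloor dp \rfloor, \lceil dp \rceil\}$.

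My plan is to exploit the log-concavity of the binomial pmf. Writing $r_k := \Pr{X = k+1}/\Pr{X = k} = (d-k)p/((k+1)(1-p))$, one sees that $r_k$ is strictly decreasing in $k$ and equals $1$ precisely at $k^* = (d+1)p - 1$, so the mode of $X$ is $\lfloor (d+1)p \rfloor$, which lies within distance $1$ of $\Ex{X} = dp$. This localisation of the mode is the first ingredient.

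The second ingredient is a comparison of the two tails $\Pr{X \leq m-1}$ and $\Pr{X \geq m+1}$ for $m := \lfloor dp \rfloor$. I would pair the terms $\Pr{X = m-j}$ and $\Pr{X = m+j}$ for $j \geq 1$: each ratio $\Pr{X = m+j}/\Pr{X = m-j}$ telescopes into a product of consecutive $r_k$'s, and a careful computation of this product shows that the sign of $\Pr{X \geq m+1} - \Pr{X \leq m-1}$ agrees with that of $dp - m \in [0,1)$. Combined with the mode localisation, this yields $\Pr{X \leq m} \geq 1/2$ and $\Pr{X \geq m+1} \geq 1/2$ simultaneously, forcing the median into $\{m, m+1\} = \{\lfloor dp \rfloor, \lceil dp \rceil\}$ as desired.

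The main obstacle is that the telescoping estimate must be tight enough to cover all parameter regimes, in particular when $dp$ is close to an integer or close to the boundary values $0$ or $d$, where the comparison between paired terms degenerates. This bookkeeping is the technical content of Kaas and Buhrman's argument, and it is the reason we prefer to invoke their theorem directly rather than reproduce the case analysis here.
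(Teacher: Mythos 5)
Your proposal is correct and matches the paper's treatment: the paper likewise does not prove the median localisation but simply cites Kaas and Buhrman \cite[Theorem 1]{Kaas1980MedianBin}, and your deduction of the tail bound $\Pr{X \geq C} \geq \frac{1}{2}$ from $C \leq \lfloor \Ex{X} \rfloor \leq M$ is exactly the intended (and valid) argument for the ``thus'' clause. The additional sketch via log-concavity and the mode $\lfloor (d+1)p \rfloor$ is a reasonable outline of the cited result, and deferring its case analysis to the reference is consistent with what the paper itself does.
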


\section{Nice structural properties of geometric graphs}\label{s:conditions}

In this section we discuss the properties of the graphs in the class $\mathcal{H}(K)$. Roughly, one can think of these graphs  as having a structure which is in some sense \emph{close} to being governed by some \emph{local coordinate system}, where edges are only allowed between vertices which differ in a single coordinate. Crucially there are only ever a bounded number of neighbours in each coordinate.

Given a graph $G$ we will write $d(G)$ for the average degree of $G$, $\delta(G)$ for the minimum degree of $G$ and $\Delta(G)$ for the maximum degree of $G$. Given two vertices $x,y \in V(G)$ we will write $\dist_{G}(x,y)$ for the \emph{distance} between $x$ and $y$ in $G$, that is, the length of the shortest $x-y$ path in $G$. Given $k \in \mathbb{N} \cup \{0\}$ and $x \in V(G)$ we let 
\[B_G(x,k):= \{ y \in V(G) \colon \dist_G(x,y) \leq k \}\]
be the \emph{ball of radius $k$} centred at $x$ and
\[S_G(x,k) := B_G(x,k) \setminus B_G(x,k-1)\]
be the \emph{sphere of radius $k$} centred at $x$, where $S_G(x, 0) := \{x\}$. Note that $B_G(x, 0) = \{x\}$, $S_G(x, 1) = N_G(x)$, and $B_G(x, 1) = \{x\} \cup N_G(x)$. When the underlying graph $G$ is clear from the context, we will omit the subscript in this notation.

Given $K \in \mathbb{N}$ define the \emph{class $\mathcal{H}(K)$ of geometric graphs}  recursively by taking the single-vertex graph $K_1 \in \mathcal{H}(K)$ and then taking every graph $G$ satisfying \Cref{c:regular,c:backwards,c:localconnection,c:projection,c:partitioncond,c:bound} below.

\begin{property}[Locally almost regular]\makeatletter\edef\@currentlabel{(\theproperty)}\makeatother\label{c:regular}
 For every $x \in V(G)$, $\ell \in \mathbb{N}$ and $y \in S(x, \ell)$, we have $$|d(x)-d(y)| \leq K \ell.$$
\end{property}
In this case we say that $G$ is \emph{locally $K$-almost regular}. Heuristically, vertices at a small distance should agree in almost all their coordinates, and so their degrees, which are the sum of the number of neighbours in each coordinate, should be similar.

\begin{property}[Bounded backwards expansion]\makeatletter\edef\@currentlabel{(\theproperty)}\makeatother\label{c:backwards}
 For every $x \in V(G)$, $\ell \in \mathbb{N}$ and $y \in S(x, \ell)$, we have 
  $$|N(y) \cap B(x, \ell)| \leq K\ell.$$
\end{property}
In this case we say that $G$ has \emph{$K$-bounded backwards expansion}.
Note that having bounded backwards expansion is qualitatively equivalent to having good \emph{neighbourhood expansion} as in \Cref{thm:BaBoMogeneral} from \cite{BaBoMo2009}.
Heuristically, a vertex $y$ at distance $\ell$ from $x$ differs from $x$ in at most $\ell$ coordinates, and the only neighbours of $y$ which are not further away from $x$ also differ in a subset of these coordinates. Note that \Cref{c:regular,c:backwards} imply that $$|S(x, \ell)|= \Theta(d(x)^\ell).$$

\begin{property}[Typical local structure]\makeatletter\edef\@currentlabel{(\theproperty)}\makeatother\label{c:localconnection}
For every $x\in V(G)$ there is a set $D \subseteq V(G) \setminus \{x\}$ of \emph{non-typical} vertices such that for every $\ell \in \mathbb{N}$ the following hold: 
  \begin{enumerate}[$(i)$]
    \item\label{i:small} $|D\cap S(x,\ell)| \leq  K^{\ell-1} d(x)^{\ell-1}$;
    \item\label{i:sparse}  
    $|D\cap N(y)| \leq K\ell$ for every vertex $y \in S(x, \ell) \setminus D$;
    \item\label{i:cherry} every two vertices in $S(x, \ell) \setminus D$ have at most one common neighbour in $S(x, \ell+1) \setminus D$.
  \end{enumerate}
\end{property}
For every graph $G$ that fulfils \Cref{c:localconnection} and every $x \in V(G)$, we set 
\[S_0(x, \ell) := S(x, \ell) \setminus D.\]

Heuristically, since the number of neighbours in each coordinate is always bounded, a typical vertex at distance $\ell$ from $x$ differs in precisely $\ell$ coordinates from $x$. The set $D$ takes care of the non-typical vertices, as well as taking into account the `error' in our approximate coordinate system. Conditions \ref{i:small} and \ref{i:sparse} say that the set $D$ is small and locally sparse, whereas Condition \ref{i:cherry} clarifies a property we should expect of $S_0(x, \ell)$ -- if two vertices $u$ and $w$ differ from $x$ in precisely $\ell$ coordinates there is at most one vertex $v$ which differs from $x$ in precisely $\ell+1$ coordinates and differs from $u$ and $w$ in a single coordinate.

\begin{property}[Projection]\makeatletter\edef\@currentlabel{(\theproperty)}\makeatother\label{c:projection}
  For every $x \in V(G)$, $\ell \in \mathbb{N}$ and $y \in S(x, \ell)$, there is a subgraph $G(y)$ of $G$ such that the following hold: 
  \begin{enumerate}[$(i)$]
    \item $y \in V(G(y))$;
    \item $G(y) \in \mathcal{H}(K)$;
    \item $V(G(y)) \cap B(x, \ell-1) = \emptyset$; \item $ |d_{G(y)}(w) - d_{G}(w)| \leq K \ell$ for all $w \in V(G(y))$.
      \end{enumerate}
\end{property}
In this case we say that $G$ has the \emph{$K$-projection property}. Heuristically, the idea here is that vertices at a short distance will only differ in a small number of `coordinates', and by fixing these coordinates and allowing the others to vary, we should obtain a lower-dimensional graph $G(y)$ with similar properties to $G$ which is disjoint from $B(x,\ell-1)$. This property reflects a notion of fractal self-symmetry of the graph.

We note that in all our applications, the graphs $G(y)$ can in fact be taken to be a `lower dimensional' graph with a similar description to $G$. For example, in the case of the hypercube these projections can be taken to be subcubes, although in general they might come from a slightly more general family than the original graph $G$.

\begin{property}[Separation]\makeatletter\edef\@currentlabel{(\theproperty)}\makeatother\label{c:partitioncond}
For every $x \in V(G)$, $\ell \in \mathbb{N}$ and $y \in S_0(x, \ell)$, we have
  \[
    |B(y, 2\ell-1) \cap S_0(x, \ell)| \leq \ell K^{\ell-1}d(x)^{\ell-1}.
  \]
\end{property}
In this case we say that $G$ satisfies the \emph{$K$-separation property}. Heuristically, vertices in $S_0(x,\ell) \subseteq S(x, \ell)$ will typically differ from $x$ in $\ell$ coordinates, and so for each vertex $y \in S_0(x, \ell)$ the vertices in $B(y, 2\ell-1) \cap S_0(x, \ell)$ will typically agree with $y$ in one coordinate in which it differs from $x$, leaving one fewer `degree of freedom' for the choice of such vertices.

In particular, the $K$-separation property implies the following separation lemma.
\begin{lemma}[Separating partition]\label{l:partitiondist}
  Suppose $G$ satisfies \Cref{c:localconnection,c:partitioncond} for some $K \in \mathbb{N}$. For each $x \in V(G)$ and $\ell \in \mathbb{N}$, there exists a partition $\mathcal{P}$ of $S(x, \ell)$,
  \[ S(x, \ell) = P_1 \cup \dots \cup P_m, \]
  into $m$ disjoint sets $P_1, \ldots, P_m$ such that  the following hold:
    \begin{enumerate}[$(i)$]
    \item $m \leq  (\ell+1) K^{\ell-1} d(x)^{\ell-1}$;
    \item $\dist(y_1, y_2) \geq 2\ell$ for all distinct $y_1, y_2 \in P_j$ and every $j \in [m]$.
    \end{enumerate}
\end{lemma}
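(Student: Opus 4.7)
The plan is to translate the partition problem into a graph colouring problem. Define the \emph{conflict graph} $H$ on the vertex set $S(x,\ell)$ by declaring two distinct vertices $y_1,y_2 \in S(x,\ell)$ to be adjacent in $H$ precisely when $\dist_G(y_1,y_2) < 2\ell$. A partition of $S(x,\ell)$ satisfying condition $(ii)$ is then exactly a proper colouring of $H$, and the number of parts we need equals the number of colours used. So the task becomes to produce a proper colouring of $H$ using at most $(\ell+1)K^{\ell-1}d(x)^{\ell-1}$ colours.

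I would split the colouring into two stages, handling the typical vertices $S_0(x,\ell) = S(x,\ell)\setminus D$ and the non-typical vertices $D \cap S(x,\ell)$ separately. For the typical part, the key observation is that \Cref{c:partitioncond} directly bounds the degrees of vertices in the induced subgraph $H[S_0(x,\ell)]$. Indeed, for every $y \in S_0(x,\ell)$, the set of its $H$-neighbours in $S_0(x,\ell)$ is contained in $B(y,2\ell-1) \cap S_0(x,\ell)$, which has size at most $\ell K^{\ell-1}d(x)^{\ell-1}$; excluding $y$ itself, the maximum degree in $H[S_0(x,\ell)]$ is at most $\ell K^{\ell-1}d(x)^{\ell-1}-1$. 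A standard greedy argument then yields a proper colouring of $S_0(x,\ell)$ with at most $\ell K^{\ell-1}d(x)^{\ell-1}$ colours.

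For the non-typical part, I would simply assign each vertex of $D \cap S(x,\ell)$ its own fresh colour, disjoint from those used on $S_0(x,\ell)$. By \Cref{c:localconnection}\ref{i:small} this costs at most $K^{\ell-1}d(x)^{\ell-1}$ additional colours, and trivially produces a proper colouring since no two vertices share a colour within $D$. Combining the two stages uses at most
\[
\ell K^{\ell-1}d(x)^{\ell-1} + K^{\ell-1}d(x)^{\ell-1} \;=\; (\ell+1)K^{\ell-1}d(x)^{\ell-1}
\]
colours. Taking the colour classes as the parts $P_1, \dots, P_m$ gives both the size bound in $(i)$ and the distance guarantee in $(ii)$.

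There is no real obstacle here beyond being careful about bookkeeping: the only subtle point is noting that the bound in \Cref{c:partitioncond} includes $y$ itself and that we must exclude it to obtain a degree bound, and that we are forced to handle $D \cap S(x,\ell)$ with trivial singleton colour classes because \Cref{c:partitioncond} gives no control over the $H$-degrees of non-typical vertices. Since $|D \cap S(x,\ell)|$ is itself bounded by $K^{\ell-1}d(x)^{\ell-1}$, this cost is absorbed by the factor $\ell+1$ rather than $\ell$ in the statement.
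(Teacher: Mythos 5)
Your proposal is correct and takes essentially the same approach as the paper: the paper also assigns each vertex of $D \cap S(x,\ell)$ its own singleton part and colours $S_0(x,\ell)$ greedily using \Cref{c:partitioncond} to bound the number of classes. The conflict-graph framing is a cosmetic repackaging of the same greedy argument, and your observation about excluding $y$ from the ball is a small clarification the paper leaves implicit.
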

\begin{proof}
  By \Cref{c:localconnection} there are at most $K^{\ell-1} d(x)^{\ell-1}$ vertices in $D\cap S(x,\ell)$. Choose a new partition class $P_i$ for each such vertex.

  Choose the partition of $S_0(x, \ell)$ greedily by putting each vertex $w \in S_0(x, \ell)$ in the first partition class such that the distance condition is satisfied.
  For $y \in S_0(x, \ell)$, by \Cref{c:partitioncond} we have $|B(y, 2\ell-1) \cap S_0(x, \ell)| \leq \ell K^{\ell-1} d(x)^{\ell-1}$.
  Thus we get at most this many partition classes for $S_0(x, \ell)$.

  Combining the number of partition classes from $S(x, \ell) \cap D$ and $S_0(x, \ell)$ proves the claim.
\end{proof}

We note that while \Cref{c:regular,c:backwards,c:localconnection,c:projection,c:partitioncond}  make assertions for each $\ell \in \mathbb{N}$, in fact in the proof of Theorem \ref{t:mainThm} we will only need these properties for $\ell \leq 6$. Finally, our proof methods require that the graph $G$ is not too large.
\begin{property}[Exponential order]\makeatletter\edef\@currentlabel{(\theproperty)}\makeatother\label{c:bound}
  $G$ has at most $\exp(K \delta(G))$ vertices.
\end{property}

\section{Proof of the 1-statement of Theorem \ref{t:mainThm}}\label{s:1-statement}
Throughout the rest of the paper we assume that $G$ is a graph in $\mathcal{H}(K)$ for some $K\in \mathbb{N}$  (so $G$ satisfies \Cref{c:regular,c:backwards,c:localconnection,c:projection,c:partitioncond,c:bound} for this $K$), and all asymptotics will be as $\delta(G) \to \infty$. 

In this section we will prove the 1-statement of \Cref{t:mainThm}:  For any $\varepsilon >0$, if
  \[
    p > \frac{1}{2}-\left(\frac{1}{2}-\eps\right) \sqrt{\frac{\log \dmax}{\dmax}},
  \]
then $$ \Phi(p,G):= \Pr{{\bf A}_p \text{ percolates on } G} \to 1.$$ In fact,  we will prove that 
$$
\Pr{ \exists x \in V(G): x \notin A_{11}} =o(1).
$$
Along the proof we will introduce auxiliary lemmas, but their proofs are deferred to \Cref{sec:11suffice}. 
 
For each vertex $x \in V(G)$ let \[ \sigma(x) := \sqrt{\frac{\log d(x)}{d(x)}}\]
and choose the probability $p$ to be significantly larger than 
\[\tilde p_c(x) := \frac{1}{2} - \frac{1}{2} \sigma(x).\]
More precisely, for $c < \frac{1}{2}$ we let $$p := \frac{1}{2} - c\sigma(x).$$
Since the initially infected set of vertices $A_0$ is distributed as a $p$-random subset ${\bf A}_p$ of $V(G)$, the probability that a vertex $x$ lies in $A_0$ is approximately $\frac{1}{2}$. We start by showing that the probability that $x$ is infected after the first two rounds is already significantly larger than $\frac{1}{2}$.

In fact, we will show a slightly stronger statement, bounding the probability that a vertex $x$ is infected in the first two rounds of a slightly weaker infection process, where the threshold to infect a vertex $v$ is raised from $\frac{d(v)}{2}$ to $\frac{d(v)}{2} + m$ for some fixed $m \in \mathbb{N} \cup \{0\}$.
More precisely, we define recursively $A_0(m): =A_0$ and for each $i \in \mathbb{N} \cup \{0\}$
\[
  A_{i+1}(m) := A_i(m) \cup \left\{ v \in V(G) \colon |A_i(m) \cap N(v)| \geq \frac{d(v)}{2} + m \right\}.
\]
Note that if $m=0$, then $A_i = A_i(m)$ for all $i \in \mathbb{N}$ and that for any $m \geq 0$, \[A_i(m) \subseteq A_i \text{ for all } i \in \mathbb{N}.\]

The event that a vertex $x$ is infected by the second round depends only on vertices at distance at most two from $x$. More precisely, whether $x$ is infected in the second round only depends on how many neighbours of $x$ are infected initially and how many are infected in the first round. The number of initially infected neighbours is given by a binomial random variable with expectation $d(x)/2- c\sqrt{d(x) \log d(x)}$, and so in order to infect $x$ in the second round, on average we need around $c\sqrt{d(x) \log d(x)}$ many additional neighbours of $x$ to be infected in the first round.

For each neighbour of $x$ there is a small but non-negligible chance that it is infected in the first round. Indeed, since the number of initially infected neighbours of a vertex is distributed binomially, we can use the anti-concentration statement in \Cref{l:CLT} to calculate quite precisely the probability that it is infected in the first round, which is around $\Omega\left(d(x)^{-2c^2}\right)$ as we will show later, and so the expected number of neighbours of $x$ which will be infected in the first round is of order $\Omega\left(d(x)^{1-2c^2}\right) \gg \sqrt{d(x) \log d(x)}$. Furthermore, the local structure of graphs in $\mathcal{H}(K)$ ensures that for two neighbours of $x$ the events that they get infected in the first round are close to independent. This allows us to show that it is reasonably likely that $x$ will be infected in the second round using a second moment argument.

\begin{restatable}{lemma}{lconstant}\label{l:constant}
  Let $m \in \mathbb{N} \cup \{0\}$, $x\in V(G)$, $c < \frac{1}{2}$, $p = \frac{1}{2} - c\sigma(x)$ and $A_0 \sim {\bf A}_p$.
  If there is a $K \in \mathbb{N}$ such that $G \in \mathcal{H}(K)$, then
  \[
    \Pr*{x \in A_2(m)} \geq \frac{3}{4} + o(1).
  \]
\end{restatable}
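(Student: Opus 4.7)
The plan is to reduce the statement to a second moment computation on $W := |A_1(m) \cap N(x)|$, the number of neighbours of $x$ infected by round one. Let $d := d(x)$. Since $x \in A_2(m)$ whenever $x \in A_0$, and otherwise $x \in A_2(m)$ iff $W \geq d/2 + m$, I would use \Cref{l:Binvariant} to remove the conditioning on $x \notin A_0$ and then aim to show
\[
\Pr{W \geq d/2 + m} \geq \tfrac{1}{2} + o(1).
\]
Combined with $\Pr{x \in A_0} = p = \tfrac{1}{2} - o(1)$, this gives $\Pr{x \in A_2(m)} \geq p + (1-p)\bigl(\tfrac{1}{2} + o(1)\bigr) = \tfrac{3}{4} + o(1)$, as required.

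To that end, I would split $W = W_0 + W_1$, where $W_0 := |A_0 \cap N(x)| \sim \Bin(d, p)$ counts already-infected neighbours, and $W_1 := \sum_{y \in N(x)} V_y$ with $V_y := \mathbb{1}[y \notin A_0,\,|A_0 \cap N(y)| \geq d(y)/2 + m]$. By \Cref{p:MedianBin}, $\Pr{W_0 \geq \lfloor dp \rfloor} \geq \tfrac{1}{2}$ and $\lfloor dp \rfloor = d/2 - c\sqrt{d \log d} - O(1)$, so it suffices to show $W_1 \geq c\sqrt{d \log d} + m + O(1)$ with probability $1 - o(1)$, and then take a union bound. For the first moment, \Cref{c:regular} gives $d(y) = d \pm K$ for $y \in N(x)$, and applying the asymptotics from \Cref{l:CLT} to $\Bin(d(y), p)$ with threshold $d(y)/2 + m$, which lies $(1 + o(1))\cdot 2c\sqrt{\log d}$ standard deviations above the mean, yields
\[
\Ex{V_y} = (1+o(1))\,\frac{d^{-2c^2}}{4c\sqrt{2\pi \log d}}.
\]
Summing over $y \in N(x)$ gives $\Ex{W_1} = \Theta(d^{1-2c^2}/\sqrt{\log d})$, which greatly exceeds $c\sqrt{d \log d} + m$ since $c < \tfrac{1}{2}$ forces $1 - 2c^2 > \tfrac{1}{2}$.

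The main obstacle is bounding $\operatorname{Var}(W_1) = o(\Ex{W_1}^2)$ to enable Chebyshev's inequality. Since each $V_y$ depends on $A_0$ only within $\{y\} \cup N(y)$, the covariance of $V_y$ and $V_{y'}$ is driven by the overlap $N(y) \cap N(y')$. For pairs $y, y' \in N(x) \setminus D$ with $y \not\sim y'$, I plan to argue that this overlap has size bounded by a constant: apart from the trivial common neighbour $x$, \Cref{c:backwards} at $\ell = 1$ limits $|N(y) \cap N(x)| \leq K - 1$, condition (iii) of \Cref{c:localconnection} restricts common neighbours in $S(x, 2) \setminus D$ to at most one, and condition (ii) of \Cref{c:localconnection} bounds $|N(y) \cap D|$ by $K$. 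Once the overlap is a constant, the iterated form of \Cref{l:Binvariant} stated in the remark just after that lemma shows that conditioning on the $A_0$-statuses of the overlap vertices perturbs the conditional tail probabilities of $V_y$ and $V_{y'}$ only by a factor $1 + o(1)$; together with the conditional independence of $V_y, V_{y'}$ given the overlap, this yields $\operatorname{Cov}(V_y, V_{y'}) = o(\Pr{V_y = 1}\Pr{V_{y'} = 1})$. Summed over the $O(d^2)$ typical pairs this contributes $o(\Ex{W_1}^2)$. Atypical pairs (those with $y \in D$, $y' \in D$, or $y \sim y'$) total at most $O(d)$ by condition (i) of \Cref{c:localconnection} and \Cref{c:backwards}, contributing only $O(d)$ to the variance, which is negligible since $\Ex{W_1}^2 \gg d$. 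Applying Chebyshev then gives $W_1 \geq \tfrac{1}{2}\Ex{W_1}$ with probability $1 - o(1)$, and the union bound with the median event for $W_0$ closes the argument. The most delicate bookkeeping will be the uniform bound on $|N(y) \cap N(y')|$ for typical pairs, where the three sub-conditions of \Cref{c:localconnection} have to interact correctly with \Cref{c:regular} and \Cref{c:backwards}.
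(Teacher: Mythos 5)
Your proposal is correct and follows essentially the same route as the paper: the same decomposition of the neighbourhood count into initially infected neighbours (handled via the median of the binomial, \Cref{p:MedianBin}) and first-round-infected neighbours (handled by a second-moment/Chebyshev argument whose first moment comes from the normal approximation and whose variance is controlled by bounding common neighbourhoods using \Cref{c:backwards,c:localconnection}). The only cosmetic differences are that you replace the paper's conditioning on the event $\mathcal{E}_0$ by the bound $\Pr{A \cap B} \geq \Pr{A} - \Pr{B^c}$, and that when removing the $O(1)$-vertex conditioning you will need \Cref{l:CLT-corollary} (as the paper does, via stochastic domination) rather than \Cref{l:Binvariant} alone, since the relevant threshold $d(y)/2+m$ sits just above $d(y)/2$ and hence outside the stated range of that lemma.
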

We note that for the proof of \Cref{l:constant} we will only need \Cref{c:regular,c:backwards,c:localconnection} (see \Cref{sec:11suffice}).

Next we show that, above the probability $\tilde p_c(x)=\frac{1}{2}-\frac{1}{2} \sigma(x)$, the probability that a vertex $x$ is not infected shrinks quickly, from a constant probability in the second round to an exponentially small one by the fifth round.

Let us sketch the main ideas. If a vertex $x$ is not infected by the fifth round, then there is a subset $T'\subseteq N(x)$ of size $\frac{d(x)}{2}$ which is not infected by the fourth round.
Since no vertex in $T'$ is infected by the fourth round, and each vertex in $T'$ has approximately degree $d(x)$ there are at most roughly $d(x)|T'|/2$ many edges from $T'$ to $A_3$.
However, using the explicit structure of our graph $G \in \mathcal{H}(K)$, it is easy to show that the number of edges from $T'$ to the sphere $S(x,2)$ is approximately $d(x)|T'|$.

Since, by \Cref{l:constant}, each vertex is in $A_2$ (and hence also in $A_3$) with probability larger than say $2/3$, it should be very unlikely that fewer than half the edges from $T'$ to $S(x,2)$ go to vertices in $A_3$. However, due to the dependencies between the events that vertices in $S(x,2)$ lie in $A_3$, it is difficult to make this precise.

Instead, we look one round further -- if many vertices in $T = N(T') \cap S(x,2)$ are not infected by the third round then we again find that at most half of the edges from $T$ to the sphere $S(x,3)$ go to vertices in $A_2$.
However, by \Cref{l:partitiondist} we can partition $S(x,3)$ into $O(d^2)$ sets whose pairwise distance is at least $4$.
In particular, in any partition class the events that the vertices lie in $A_2$ are mutually independent.
By a double-counting argument we can find one partition class $B \subseteq S(x, 3)$ in which less than half of the edges from $T$ to $B$ go to vertices in $A_2$, which is very unlikely since each vertex in $B$ lies in $A_2$ with probability at least $2/3$ and these events are independent.

However, whether the vertices in $B$ lie in $A_2$ might still depend on the sets of initially infected vertices in $S(x,1)$ and $S(x,2)$ which also influence our choices of $T',T$ and $B$.
To get around this issue we use \Cref{c:projection} to find for each $y\in B$ a slightly smaller graph $G(y) \in \mathcal{H}(K)$ with $G(y) \subseteq G \setminus B(x, 2)$, where the vertex degrees and structure are approximately preserved.
This guarantees that infection with a slightly increased threshold in $G(y)$ is sufficient to imply infection in $G$, and allows us to apply \Cref{l:constant} to $y$ inside the subgraph $G(y)$.

\begin{restatable}{lemma}{ldimred} \label{l:dimred}
  Let $x\in V(G)$, $c < \frac{1}{2}$, and $p = \frac{1}{2} - c\sigma(x)$.
  If there is a $K \in \mathbb{N}$ such that $G \in \mathcal{H}(K)$, then there exists a $\beta >0$ (independent of $x$) such that
  \[
    \Pr[\big]{ x \notin A_5 } \leq \exp\bigl( - \beta d(x)\bigr).
  \]
\end{restatable}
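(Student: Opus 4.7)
The plan is to combine a deterministic structural consequence of $\{x \notin A_5\}$ with an independence argument based on the projection property, following the outline sketched by the authors.

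First, I would unpack $\{x \notin A_5\}$ one layer at a time. If $x \notin A_5$, then $T' := N(x) \setminus A_4$ satisfies $|T'| > d(x)/2$, and each $y \in T'$ has fewer than $d(y)/2$ neighbours in $A_3$. Using \Cref{c:regular,c:backwards} to relate $d(y)$ to $d(x)$ and to bound backward neighbours in $B(x,1)$, an edge count yields $|T| = \Omega(d(x)^2)$ for $T := (N(T') \cap S(x,2)) \setminus A_3$. Running the same edge count one layer deeper shows that, of the $\Theta(|T|\, d(x))$ edges from $T$ to $S(x,3)$, at most a $(1/2 + o(1))$-fraction end in $A_2$.

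Next, I would invoke \Cref{l:partitiondist} with $\ell = 3$ to partition $S(x,3) = P_1 \cup \dots \cup P_m$ with $m \leq 4K^2 d(x)^2$ and pairwise intra-class distance at least $6$. By averaging, some class $B = P_{j^\ast}$ is incident to $\Omega(d(x))$ edges from $T$ with at most a $(1/2+o(1))$-fraction of those edges landing in $A_2$. For each $y \in B$, \Cref{c:projection} supplies a subgraph $G(y) \in \mathcal{H}(K)$ disjoint from $B(x,2)$ whose degrees differ from those in $G$ by at most $3K$; taking a threshold offset $m' = O(K)$ large enough ensures that being infected by round $2$ of the dominated process inside $G(y)$ still implies lying in $A_2$ in $G$. \Cref{l:constant} applied inside $G(y)$ then gives that this happens with probability at least $3/4 + o(1)$. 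Because distinct vertices in $B$ are at $G$-distance at least $6$, the balls $B_{G(y)}(y,2)$ are pairwise disjoint and also disjoint from $B(x,2)$, so the round-$2$ events are mutually independent as $y$ varies over $B$ and are independent of the initial infections in $B(x,2)$.

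To finish, I would apply Hoeffding's inequality to the weighted sum of these independent Bernoulli indicators indexed by $B$, with weights $|N(y) \cap T| \leq 3K$ controlled by \Cref{c:backwards}. Since the expected weighted fraction in $A_2$ is at least $3/4 - o(1)$ while the event forces it below $1/2 + o(1)$, and the total weight is $\Omega(d(x))$ with each weight bounded by $3K$, the probability is at most $\exp(-\Omega(d(x)))$. A union bound over the $O(d(x)^2)$ possible classes $j^\ast$ absorbs the polynomial factor, yielding $\Pr{x \notin A_5} \leq \exp(-\beta\, d(x))$ for some $\beta > 0$ independent of $x$.

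The principal obstacle I expect is controlling the correlation between $T$ (which depends on initial infections throughout $B(x,5)$) and the independent indicators on $B$ (which depend on infections inside $\bigcup_{y \in B} V(G(y)) \subseteq V(G) \setminus B(x,2)$, a region that can still meet $S(x,3) \cup S(x,4)$). The projection property is engineered precisely so that this can be handled by a reveal-and-condition scheme: exposing initial infections in $B(x,2)$ first determines enough structure to control $T$ without touching the variables on $B$ used for concentration. Making this reveal rigorous, and verifying that the deterministic edge-fraction bound survives conditioning, is the technical crux.
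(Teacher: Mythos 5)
Your outline matches the paper's broad strategy (partition $S(x,3)$ via \Cref{l:partitiondist}, certify $y \in A_2$ through the projected graphs $G(y)$ and \Cref{l:constant}, then concentrate), but the step you yourself flag as the ``technical crux'' is a genuine gap, and the reveal scheme you propose does not close it. Your sets $T' = N(x)\setminus A_4$ and $T = (N(T')\cap S(x,2))\setminus A_3$ are random and depend on the initial infections throughout $B(x,5)$ and beyond. Exposing $A_0 \cap B(x,2)$ does \emph{not} determine $T$: membership of a vertex of $S(x,2)$ in $A_3$ depends on initial infections in $S(x,3)\cup S(x,4)\cup S(x,5)$, which is exactly the region (outside $B(x,2)$, but inside $\bigcup_y B_{G(y)}(y,2)$) that drives your certification indicators on $B$. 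So the weights $|N(y)\cap T|$ in your Hoeffding application are random and correlated with the very Bernoulli variables you are summing, and the inequality does not apply as stated.

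The paper resolves this with two ideas absent from your proposal. First, it never conditions on the random witness: it fixes a \emph{deterministic} $T'\subseteq N(x)$ of size $d/2$ and takes a union bound over all such $T'$ disjoint from $A_0$; the number of candidates is controlled by first conditioning on the high-probability event $|N(x)\cap A_0|\geq(\tfrac12-\alpha_1)d$, which caps the count at $e^{\alpha_2 d}$ and is affordable against a per-witness bound of $e^{-2\alpha_2 d}$. Second, with $T := N(T')\cap S(x,2)$ now deterministic, it replaces your $A_3$-dependent set by $T\setminus A_0$ (measurable with respect to $A_0$ restricted to $S(x,2)$) and treats ``$|T\cap(A_3\setminus A_0)|$ is small'' as the hypothesised bad event rather than as a conditioning; the edge count from $T\setminus A_0$ to $S_0(x,3)\setminus A_2$ is then lower-bounded deterministically under that hypothesis, and after exposing $T\cap A_0$ the quantities $b_i$ are fixed while the projected events on a partition class remain independent with failure probability at most $1/3$, so the stochastic domination by $\sum_i iB_i$ with $B_i\sim\Bin(b_i,1/3)$ is legitimate. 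Without the union bound over $T'$ and the substitution of $A_0$ for $A_3$ in the edge counts, your argument does not go through.
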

We note that for the proof of \Cref{l:dimred} we will use \Cref{c:regular,c:backwards,c:localconnection,c:projection,c:partitioncond} (see \Cref{sec:11suffice}).

Finally, we bootstrap \Cref{l:dimred} to show that after six more rounds the probability that a vertex is not infected shrinks even further, and becomes super-exponentially small.
Since the order of the graphs in \Cref{t:mainThm} are exponential in their vertex degrees, this will be enough to deduce that, above an appropriate threshold, whp all vertices will become infected by the eleventh round.

Here we can afford to be slightly less careful than in \Cref{l:dimred}.
If a vertex $x$ is not infected by the $k$-th round, it is relatively easy to show that the number of vertices at distance $\ell$ which are not infected by the $(k-\ell)$-th round must be growing like $\Theta\left(d(x)^{\ell}\right)$.
In particular, if $x$ is not infected by the eleventh round, there is some set $T'$ with $\Theta(d(x)^{6})$ many vertices in the sphere $S(x,6)$ which are uninfected by the fifth round.
By \Cref{l:partitiondist} we can partition $S(x,6)$ into $O(d(x)^5)$ many sets within which all vertices have pairwise distance at least twelve.
By an averaging argument, some partition class $P$ must contain a large number of vertices in $T'$. However, since the events that the vertices of $P$ are not infected by the fifth round are independent, and each is unlikely by \Cref{l:dimred}, it follows from Chernoff's inequality (\Cref{l:Chernoff}) that it is extremely unlikely that any partition class contains a large number of vertices in $T'$.

\begin{restatable}{lemma}{lsuperexp}\label{l:super-exponential}
  Let $x\in V(G)$, $c < \frac{1}{2}$, and $p = \frac{1}{2} - c\sigma(x)$.
  If there is a $K \in \mathbb{N}$ such that $G \in \mathcal{H}(K)$, then there exists a $\beta >0$ (independent of $x$) such that
  \[
    \Pr[\big]{x \notin A_{11}}< \exp\bigl(-\beta d(x)^2\bigr).
  \]
\end{restatable}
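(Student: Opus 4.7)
The plan is to combine a deterministic growth estimate for the uninfected neighbourhoods of $x$ with the separating partition from \Cref{l:partitiondist} and a Chernoff bound, following the sketch given in the introduction.

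\textbf{Step 1: Deterministic growth.} I would first show by induction on $\ell \in \{0,1,\ldots,6\}$ that the event $\{x \notin A_{11}\}$ forces the set
\[
  T_\ell := S(x,\ell) \setminus A_{11-\ell}
\]
to satisfy $|T_\ell| \ge c_\ell\, d(x)^\ell$ for some constant $c_\ell = c_\ell(K) > 0$, provided $d(x)$ is large enough. The base case $\ell = 0$ is immediate since $T_0 = \{x\}$. For the inductive step, fix $y \in T_{\ell-1}$; because $(A_i)$ is increasing and $y \notin A_{12-\ell}$, strictly fewer than $d(y)/2$ neighbours of $y$ lie in $A_{11-\ell}$, and by \Cref{c:backwards} at most $K(\ell-1)$ of these neighbours lie in $B(x,\ell-1)$, so at least $d(y)/2 - K(\ell-1)$ of them lie in $T_\ell$. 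Using $d(y) \ge d(x) - K(\ell-1)$ from \Cref{c:regular} together with the bound $|N(z) \cap B(x,\ell)| \le K\ell$ at each $z \in T_\ell \subseteq S(x,\ell)$ from \Cref{c:backwards}, a double count of edges between $T_{\ell-1}$ and $T_\ell$ yields a recursion $|T_\ell| \ge c\,|T_{\ell-1}|\, d(x)/\ell$ for a constant $c = c(K) > 0$, which iterates to give in particular $|T_6| \ge c_6\, d(x)^6$.

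\textbf{Step 2: Partition and independence.} Next I would apply \Cref{l:partitiondist} with $\ell = 6$ to partition $S(x,6) = P_1 \cup \cdots \cup P_m$ with $m \le 7 K^5 d(x)^5$ and pairwise distance at least $12$ within each class. By pigeonhole some $P_j$ then satisfies $|T_6 \cap P_j| \ge |T_6|/m \ge c'\, d(x)$ for a constant $c' > 0$. Since $\dist(y,y') \ge 12$ for distinct $y,y' \in P_j$, the balls $B(y,5)$ and $B(y',5)$ are disjoint, and the event $\{y \notin A_5\}$ is determined only by $A_0 \cap B(y,5)$; hence these events are mutually independent within each $P_j$.

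\textbf{Step 3: Tail bound and union bound.} For each $y \in S(x,6)$, \Cref{c:regular} gives $d(y) = d(x) \pm 6K$, and so $\sigma(y) = (1+o(1))\sigma(x)$; consequently $p = \tfrac{1}{2} - c\sigma(x) \ge \tfrac{1}{2} - c''\sigma(y)$ for some fixed $c'' < \tfrac{1}{2}$ once $d(x)$ is large enough. By monotonicity of the process in the initial density, \Cref{l:dimred} applied at $y$ with constant $c''$ then yields $\Pr{y \notin A_5} \le \exp(-\beta' d(x))$ for some $\beta' > 0$ independent of $y$. Therefore $X_j := |\{y \in P_j : y \notin A_5\}|$ is stochastically dominated by a $\Bin(|P_j|, \exp(-\beta' d(x)))$ random variable whose expectation is at most $O(d(x)^6)\exp(-\beta' d(x)) \ll d(x)$. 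The upper-tail Chernoff bound in \Cref{l:Chernoff}\ref{i:bigtail} then gives
\[
  \Pr{X_j \ge c'\, d(x)} \le \exp(-\beta''\, d(x)^2)
\]
for a constant $\beta'' > 0$. Combined with a union bound over the $m = O(d(x)^5)$ classes, this produces the claimed $\Pr{x \notin A_{11}} < \exp(-\beta\, d(x)^2)$.

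\textbf{Main obstacle.} The only real delicacy is ensuring the constants survive: in Step 1 one must track the dependence of $c_\ell$ on $K$ so that $|T_6|/m$ is genuinely $\Omega(d(x))$ rather than merely $\Omega(1)$, and in Step 3 one must absorb the small degree discrepancy $d(y) - d(x)$ when invoking \Cref{l:dimred}. Both issues are handled by the quantitative bounds built into \Cref{c:regular,c:backwards} together with the assumption $\delta(G) \to \infty$.
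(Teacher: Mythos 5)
Your proposal is correct and follows essentially the same route as the paper's proof: the same deterministic growth estimate $|T_\ell| = \Omega(d(x)^\ell)$ via \Cref{c:regular,c:backwards}, the same use of \Cref{l:partitiondist} at $\ell = 6$ to extract a partition class meeting $T_6$ in $\Omega(d(x))$ independent vertices, and the same combination of \Cref{l:dimred} with \Cref{l:Chernoff}\ref{i:bigtail} and a union bound over the $O(d(x)^5)$ classes. The only (immaterial) difference is that the paper's pigeonhole selects a class $P_j$ satisfying both $|P_j| \ge \varepsilon d$ and $|P_j \cap T'| \ge \varepsilon |P_j|$ and bounds the probability that a constant \emph{fraction} of $P_j$ escapes $A_5$, whereas you bound the absolute count $|P_j \setminus A_5| \ge c'd(x)$ directly; both yield $\exp(-\Omega(d(x)^2))$.
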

We note that for the proof of \Cref{l:super-exponential} we will use \Cref{c:regular,c:backwards,c:localconnection,c:projection,c:partitioncond} (see \Cref{sec:11suffice}).

We are now in a position to prove the $1$-statement of \Cref{t:mainThm}.
\begin{proof}[Proof of $1$-statement in \Cref{t:mainThm}]
Let    $K \in \mathbb{N}$ and let $(G_n)_{n \in \mathbb{N}}$ be a sequence of graphs such that $G_n \in \mathcal{H}(K)$ and $\delta(G_n) \to \infty$ as $n \to \infty$. We write $G \coloneqq G_n$.

    Let $\varepsilon >0$ and let
  \[
    p > \frac{1}{2}-\left(\frac{1}{2}-\eps\right) \sqrt{\frac{\log \dmax}{\dmax}}.
  \]
  Since $d \mapsto \frac{1}{2}-\left(\frac{1}{2}-\eps\right) \sqrt{(\log d)/d}$ is an increasing function in $d$, by \Cref{l:super-exponential} there exists $\beta >0$ such that for every $x \in V(G)$
  \[
\Pr*{x \notin A_{11}} \leq \exp(-\beta d(x)^2) \leq \exp(- \beta \dmin^2).
  \]
  Hence, by \Cref{c:bound} we have $|V(G)| \leq \exp(K\delta(G))$ and so
  \begin{align}
   1 - \Phi(p,G)  = \Pr{{\bf A}_p \text{ does not percolate on } G}\
   &\leq \Pr{ \exists x \in V(G): x \notin A_{11}} \nonumber\\&\leq |V(G)| \cdot \exp(- \beta \delta(G)^2) =o(1),\label{e:1statement:unionbound}
 \end{align}
 concluding the proof.
\end{proof}

\begin{remark}
    As mentioned before, \Cref{l:constant,l:dimred,l:super-exponential} do not use \Cref{c:bound}. The union bound in the last step of~\eqref{e:1statement:unionbound} is the only use of \Cref{c:bound} in the proof above. Therefore, the $1$-statement also holds for graphs satisfying \Cref{c:regular,c:backwards,c:localconnection,c:projection,c:partitioncond} and $|V(G)| = \exp(o(\delta(G)^2))$.
\end{remark}

\section{Proof of the 0-statement of Theorem \ref{t:mainThm}}\label{s:0-statement}

In this section we will prove the 0-statement of \Cref{t:mainThm}: For any $\varepsilon >0$, if
  \[
    p < \frac{1}{2}-\left(\frac{1}{2}+\eps\right) \sqrt{\frac{\log \dmin}{\dmin}},
  \]
then $$ \Phi(p,G):= \Pr{{\bf A}_p \text{ percolates on } G} \to 0.$$
Along the proof we will need some auxiliary lemmas, which are proved in \Cref{sec:dominating}. In fact, instead of directly analysing the majority bootstrap percolation process, we will analyse a generalised process which dominates the original majority bootstrap percolation process. This new process introduced in~\cite{BaBoMo2009} is called the \emph{$\Boot_k(\gamma)$ process}:  
Given $k \in \mathbb{N}$ and some function $\gamma \colon V(G) \to \mathbb{R}^+$, we recursively define $\hat{A}_0 := A_0$ and for each $\ell \in \mathbb{N} \cup \{0\}$

  \[
  \hat{A}_{\ell+1} := \hat{A}_\ell \cup \left\{ x \in V(G) : \left|N(x) \cap \hat{A}_\ell \right| \geq \frac{d(x)}{2} - \max\{0, k - \ell\}\cdot \gamma(x) \right\}.
  \]
In other words, the initial infection set is the same as in the majority bootstrap percolation process, but the infection spreads more easily in the first $k$ rounds. More precisely, a vertex $x$ is infected in the first round if it has $d(x)/2 - k\cdot\gamma(x)$ infected neighbours, and this requirement is gradually strengthened over the first $k$ rounds.
After the $k$-th round, the process evolves exactly as the majority bootstrap percolation process would do.

In particular, given $A_0$, we note that $$A_i\subseteq  \hat{A}_i \quad \text{ for all}\quad i \in \mathbb{N}.$$ Crucially, however, if a vertex $x$ becomes infected in round $\ell+1 \leq k$ of the $\Boot_k(\gamma)$ process, then at least $\gamma(x)$ of its neighbours must have become infected in round $\ell$.
This simplifies the task of showing a vertex does \emph{not} become infected. 

For our application, we fix $k = 2$ and for  each $x \in V(G)$ we let
\[ \gamma(x) := \sqrt{\frac{d(x)}{\vartheta(d(x))}}, \]
where $\vartheta(d) = \sqrt{\log d}$.
Our goal is to show that if $p$ is {small enough}, i.e, $p \leq \frac{1}{2}-c \sqrt{\frac{\log \dmin}{\dmin}}$ for $c>1/2$, then whp $$\hat{A}_2 = \hat{A}_3 \neq V(G).$$
 In other words, whp the process $\Boot_2(\gamma)$ {\em stabilises after two rounds}. Therefore, 
 $$\bigcup_{i=0}^\infty A_i \subseteq \bigcup_{i=0}^\infty \hat{A}_i = \hat{A}_2 \neq V(G)$$ and hence the majority bootstrap percolation process does not percolate.

Note that our choice of the parameter $\gamma(\cdot)$ simplifies the argument presented in \cite{BaBoMo2009} and allows us to study the first three instead of four rounds of the process, while obviating the need for some of the counting arguments given in \cite{BaBoMo2009}.
The choice of this parameter has to be such that $\gamma(x)$ is asymptotically smaller than $d(x) \sigma(x) = \sqrt{d(x) \log d(x)}$.
Our choice of $\gamma(x)$ to be very close to this bound allows us to simplify the argument, at the cost of giving a weaker bound on the width of the critical window.

We start by showing that it is very likely that the $\Boot_2(\gamma)$ process stabilises by the second round, by bounding the probability that a vertex is infected in the third round.
Recall that, as defined in \Cref{s:1-statement},
$$\sigma(x) := \sqrt{\frac{\log d(x)}{d(x)}}$$ for each vertex $x$.

\begin{restatable}{lemma}{Athree}\label{l:inA3-A2}
  Let $x \in V(G)$, $c > \frac{1}{2}$ and $p = \frac{1}{2}-c \sigma(x)$.
  If there is a $K \in \mathbb{N}$ such that $G \in \mathcal{H}(K)$, then there exists a $\beta >0$ (independent of $x$) such that
  \[ \Pr{x \in \hat{A}_3 \setminus \hat{A}_2} \leq \exp\left(-\beta \gamma(x)^2 \log d(x)\right).\]
\end{restatable}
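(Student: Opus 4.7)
The plan is to reduce the event $\{x \in \hat{A}_3 \setminus \hat{A}_2\}$ to a cascade of rare events localised in $B(x, 2)$ and bound its probability by a union bound combined with Chernoff-type tail estimates. Throughout I write $d = d(x)$ and $\gamma = \gamma(x)$; by \Cref{c:regular}, every vertex $z$ with $\dist(x, z) \le 2$ has $d(z) = d(1 + o(1))$, so $\gamma(z) = \gamma(1 + o(1))$. If $x \in \hat{A}_3 \setminus \hat{A}_2$ then $|N(x) \cap \hat{A}_2| \ge d/2$ while $|N(x) \cap \hat{A}_1| < d/2 - \gamma$, so $T := N(x) \cap (\hat{A}_2 \setminus \hat{A}_1)$ has $|T| > \gamma$. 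For each $y \in T$, subtracting $|N(y) \cap A_0| < d(y)/2 - 2\gamma(y)$ from $|N(y) \cap \hat{A}_1| \ge d(y)/2 - \gamma(y)$ shows that $y$ has at least $\gamma(y)$ neighbours in $F := \hat{A}_1 \setminus A_0$; and a vertex $z$ lies in $F$ only if $|N(z) \cap A_0| \ge d(z)/2 - 2\gamma(z)$, which by \Cref{l:CLT} occurs with probability
\[ q := \Pr{z \in F} = d^{-2c^2 + o(1)}, \]
since the required deviation above the mean of $\Bin(d(z), p)$ is $(c + o(1))\sqrt{d \log d}$.

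Next I estimate $\Pr{y \in \hat{A}_2 \setminus \hat{A}_1}$ for a fixed $y \in N(x)$, bounding it by $\Pr{|N(y) \cap F| \ge \gamma(y)}$. The expected value is $\Ex{|N(y) \cap F|} \le d \cdot q = d^{1 - 2c^2 + o(1)}$, and crucially its ratio to $\gamma(y) \approx d^{1/2}(\log d)^{-1/4}$ tends to zero \emph{precisely} when $c > 1/2$, which is where the hypothesis enters. The indicators $\mathbf{1}[z \in F]$ for $z \in N(y)$ are not quite independent, but the overlaps of their defining neighbourhoods are of size $O(K)$ by \Cref{c:backwards} and \Cref{c:localconnection}(iii); the resulting correlations can be absorbed into $(1+o(1))$-multiplicative corrections via iterated applications of \Cref{l:Binvariant}. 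Chernoff's multiplicative inequality (\Cref{l:Chernoff}(b)) then gives
\[ \Pr{y \in \hat{A}_2 \setminus \hat{A}_1} \le \exp\bigl(-(2c^2 - \tfrac{1}{2} + o(1))\, \gamma(y) \log d\bigr). \]

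To pass from a single $y$ to all of $T$, the separating partition \Cref{l:partitiondist} together with the projection property \Cref{c:projection} let me thin $T$ into sub-collections whose witness events live in subgraphs of $\mathcal H(K)$ disjoint from each other and from $\{x\}$; this renders the events $\{y \in \hat A_2 \setminus \hat A_1\}_{y \in T}$ approximately independent and yields
\[ \Pr[\big]{\forall y \in T,\ y \in \hat{A}_2 \setminus \hat{A}_1} \le \exp\bigl(-(2c^2 - \tfrac{1}{2} + o(1))\, \gamma^2 \log d\bigr). \]
A union bound over the $\binom{d}{\lceil \gamma \rceil} \le \exp((\tfrac{1}{2} + o(1))\,\gamma \log d)$ choices of $T$ costs only $O(\gamma \log d)$ in the exponent, which is absorbed by the dominant $\gamma^2 \log d$ term (since $\gamma \to \infty$). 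Taking any $\beta$ strictly less than $c^2 - \tfrac14$ then yields $\Pr{x \in \hat{A}_3 \setminus \hat{A}_2} \le \exp(-\beta \gamma(x)^2 \log d(x))$, as required.

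The main obstacle is controlling the dependencies in the two Chernoff steps: all relevant random variables live in $B(x, 3)$, so neighbourhoods inevitably overlap, and preserving the constant $2c^2 - \tfrac{1}{2}$ in the exponent -- rather than a weaker constant that a crude edge-counting or direct union-bound argument would yield -- is delicate. The structural axioms of $\mathcal{H}(K)$, notably \Cref{c:projection} and \Cref{c:localconnection}, are designed precisely to cap these overlaps, and combining them with \Cref{l:Binvariant} is what should make the approximate-independence heuristic rigorous without eroding the numerical value of the exponent.
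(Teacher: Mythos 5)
Your witness cascade (the set $T \subseteq N(x) \cap (\hat{A}_2 \setminus \hat{A}_1)$ of size $> \gamma$, each $y \in T$ having $\gtrsim \gamma$ neighbours in $F = \hat{A}_1 \setminus \hat{A}_0$, each such vertex requiring an upward deviation of $(c+o(1))\sqrt{d\log d}$ in its initially infected neighbourhood) is exactly the combinatorial skeleton of the paper's \Cref{c:0-statement-witness}, and your entropy accounting of the exponents is the right one. The gap is in both probabilistic steps, and it is not cosmetic. First, $|N(y)\cap F|$ is a sum of \emph{dependent} indicators, so \Cref{l:Chernoff}\ref{i:bigtail} does not apply to it, and \Cref{l:Binvariant} only licenses conditioning on a \emph{bounded} number of trials, whereas you would need to condition on $\Theta(\gamma)$ of them; this step can be repaired (union bound over $\gamma$-tuples plus exposing the $O(\gamma)$ overlap vertices per neighbourhood, which is $o(\sqrt{d\log d})$ and hence harmless), but that argument is not the one you wrote. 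Second, and fatally, the passage from one $y$ to all of $T$ cannot be done by the mechanism you invoke: all $y \in T$ lie in $N(x)$, so \Cref{l:partitiondist} gives only pairwise distance $\geq 2$, and \Cref{c:projection} with $\ell = 1$ only separates $G(y)$ from $\{x\}$ -- it says nothing about disjointness of $G(y)$ and $G(y')$ for distinct $y, y' \in T$. The relevant randomness for all $\Theta(\gamma^2)$ level-two witnesses lives in $B(x,3)$, and the cumulative overlap of the $d$-sized neighbourhoods of a single witness $z$ with the other $\Theta(\gamma^2)$ neighbourhoods is $\Theta(\gamma^2) = \Theta(d/\sqrt{\log d})$, which \emph{exceeds} the deviation scale $\sqrt{d\log d}$. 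Consequently, conditioning on earlier witnesses being in $F$ can in principle inflate the conditional probability of the next one from $d^{-2c^2+o(1)}$ to a constant, and no decoupling of the form you describe closes this.

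This is precisely the obstruction the paper's proof is built to avoid: rather than multiplying the per-vertex events $\{z \in F\}$ over $z \in W_2$, it records only their aggregate consequence, namely that the edge count $Z(W_2) = e(W_2, S_0(x,3)\cap \hat{A}_0)$ exceeds its mean by $(c\sigma d - 3\gamma)|W_2|$. Since $Z(W_2)$ is a weighted sum of \emph{independent} Bernoulli variables (one per vertex of $S_0(x,3)$), with almost all weights equal to $1$ by \Cref{c:localconnection}\ref{i:cherry}, a single application of Hoeffding's inequality (\Cref{l:sumBin}) yields $\exp(-(2+o(1))c^2 |W_2| \log d)$, which beats the $\exp((1/2+o(1))|W_2|\log d)$ entropy of choosing $W_2$ exactly when $c > 1/2$. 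To complete your argument you would need either to adopt this aggregation step or to supply a genuine correlation inequality for the conjunction of $\Theta(\gamma^2)$ large-deviation events with overlapping supports; as written, the claimed bound $\Pr[\big]{\forall y \in T,\ y \in \hat{A}_2\setminus\hat{A}_1} \leq \exp(-(2c^2-\tfrac12+o(1))\gamma^2\log d)$ is unsupported.
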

We note that the proof of \Cref{l:inA3-A2} uses \Cref{c:regular,c:backwards,c:localconnection} (see \Cref{sec:dominating}). 

We will also need to show that it is unlikely that the $\Boot_2(\gamma)$ process fully percolates by the second round. Since it is very likely that around half of the vertices are initially infected, and we can quite precisely bound the probability that a vertex is infected in the first round, it will be sufficient to bound the probability that a vertex is infected in the second round.

\begin{restatable}{lemma}{Atwo}\label{l:inA2-A1}
Let $x \in V(G)$, $c > \frac{1}{2}$ and $p = \frac{1}{2}-c \sigma(x)$.
  If there is a $K \in \mathbb{N}$ such that $G \in \mathcal{H}(K)$, then
  \[ \Pr{x \in \hat{A}_2 \setminus \hat{A}_1} \leq \exp\left(-\sqrt{d(x)}\right).\]  
\end{restatable}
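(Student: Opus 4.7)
The event $x \in \hat{A}_2 \setminus \hat{A}_1$ simultaneously requires $x \notin A_0$, $|N(x) \cap A_0| < d(x)/2 - 2\gamma(x)$ (so that $x \notin \hat{A}_1$), and $|N(x) \cap \hat{A}_1| \geq d(x)/2 - \gamma(x)$ (so that $x \in \hat{A}_2$). Subtracting these forces
\[
    Y := |N(x) \cap (\hat{A}_1 \setminus A_0)| \geq \gamma(x),
\]
so the plan is to upper bound $\Pr{Y \geq \gamma(x)}$ by $\exp(-\sqrt{d(x)})$, which then yields the lemma.

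For a single $v \in N(x)$, the event $\{v \in \hat{A}_1 \setminus A_0\}$ is the intersection of $\{v \notin A_0\}$ and the binomial tail event $\{|N(v) \cap A_0| \geq d(v)/2 - 2\gamma(v)\}$, and these are independent since $v \notin N(v)$. Writing $d(v) = d(x) + O(K)$ by \Cref{c:regular}, the deviation $c d(v) \sigma(x) - 2 \gamma(v) = (1 + o(1)) c \sqrt{d(x) \log d(x)}$ of the threshold above the binomial mean puts \Cref{l:Chernoff}(a) in force, giving
\[
    q := \Pr{v \in \hat{A}_1 \setminus A_0} \leq d(x)^{-2c^2(1+o(1))}.
\]
I would then control the joint probability via the union bound
\[
    \Pr{Y \geq \gamma(x)} \leq \binom{d(x)}{\gamma(x)} \max_{S \subseteq N(x),\ |S| = \gamma(x)} \Pr{\forall v \in S\colon v \in \hat{A}_1 \setminus A_0}.
\]

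To decouple the events $\{v \in \hat{A}_1 \setminus A_0\}$ for $v \in S$, define for each $v \in S$ the \emph{private neighbourhood}
\[
    N'(v) := N(v) \setminus \Bigl( S \cup \{x\} \cup \bigcup_{w \in S \setminus \{v\}} N(w) \Bigr).
\]
By the structural properties of $\mathcal{H}(K)$, common neighbourhoods inside $B(x, 2)$ are small: for distinct typical $v, w \in S_0(x, 1)$, \Cref{c:backwards} bounds the common neighbours in $B(x, 1)$ by $K$; \Cref{c:localconnection}(ii) bounds those in $D \cap S(x, 2)$ by $K$; and \Cref{c:localconnection}(iii) bounds those in $S_0(x, 2)$ by one. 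Altogether $|N(v) \cap N(w)| = O(K)$. Since $|D \cap N(x)| \leq 1$ by \Cref{c:localconnection}(i), at most one vertex of $S$ is non-typical, and discarding it yields $S' \subseteq S$ of size at least $\gamma(x) - 1$ with all $\{v\} \cup N'(v)$ pairwise disjoint over $v \in S'$ and $|N(v) \setminus N'(v)| = O(K \gamma(x))$. For each $v \in S'$, the event $v \in \hat{A}_1 \setminus A_0$ is implied by the weaker event $\{v \notin A_0,\ |N'(v) \cap A_0| \geq d(v)/2 - 2\gamma(v) - O(K \gamma(x))\}$. These weaker events are mutually independent, and since the extra shift $O(K \gamma(x))$ is dominated by $c d(v) \sigma(x) \asymp \sqrt{d(x) \log d(x)}$, a second application of \Cref{l:Chernoff}(a) bounds each factor by $q(1+o(1))$.

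Putting the pieces together yields
\[
    \Pr{Y \geq \gamma(x)} \leq \binom{d(x)}{\gamma(x)}\, q^{\gamma(x) - 1} \leq \exp\Bigl( \gamma(x) \log d(x) \cdot \bigl( \tfrac{1}{2} - 2c^2 + o(1) \bigr) \Bigr).
\]
For $c > 1/2$ the constant $2c^2 - 1/2$ is strictly positive, and $\gamma(x) \log d(x) = \sqrt{d(x)} (\log d(x))^{3/4}$, so the exponent is asymptotically much smaller than $-\sqrt{d(x)}$, giving the desired $\Pr{Y \geq \gamma(x)} \leq \exp(-\sqrt{d(x)})$ for $d(x)$ sufficiently large. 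The main obstacle is the sparsification step: one has to carefully assemble the common-neighbour bounds from \Cref{c:backwards} and \Cref{c:localconnection} to estimate $|N(v) \setminus N'(v)|$, and isolate the at-most-one non-typical vertex of $S$ without degrading the factor $q$ appearing in the joint bound.
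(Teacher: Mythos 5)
Your argument is correct and follows the same overall strategy as the paper's proof: from $x \in \hat{A}_2 \setminus \hat{A}_1$ extract a witness set of $\gamma(x)$ neighbours lying in $\hat{A}_1 \setminus \hat{A}_0$, union bound over the $\binom{d}{\gamma} \leq \exp\bigl((1/2+o(1))\gamma \log d\bigr)$ choices, and beat this entropy cost with a joint tail bound of order $\exp\bigl(-(2+o(1))c^2\gamma\log d\bigr)$, which wins precisely because $c>\tfrac12$; both proofs then conclude via $\gamma\log d = \sqrt{d}\,(\log d)^{3/4}\gg\sqrt{d}$. The difference is in how the joint probability over a fixed witness set is controlled. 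The paper \emph{aggregates}: it considers the single random variable $Z'(W)=e(W,S_0(x,2)\cap\hat{A}_0)$, shows the witness event forces $Z'\geq \Ex{Z'}+(c\sigma d-3\gamma)|W|$, and applies the weighted Hoeffding bound (\Cref{l:sumBin}), using \Cref{c:localconnection}\ref{i:cherry} to show almost every vertex of $S_0(x,2)$ has exactly one neighbour in $W$, so the variance proxy is $(1+o(1))\gamma d$. You instead \emph{decouple}: you shrink each $N(v)$ to a private neighbourhood, use the same common-neighbour bounds to show the loss per vertex is only $O(K\gamma)=o(\sqrt{d\log d})$, and multiply independent binomial tails. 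Both mechanisms are sound and yield the same exponent.

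Two small points to tidy up. First, the implication is stated backwards: you need that $v\in\hat{A}_1\setminus A_0$ \emph{implies} the relaxed event on $N'(v)$ (so that the probability of the intersection is at most the product of the relaxed probabilities), not that it is implied by it; the inequality you actually use is the correct one. Second, you define $N'(v)$ by deleting $\bigcup_{w\in S\setminus\{v\}}N(w)$ over all of $S$, but your $O(K)$ bound on $|N(v)\cap N(w)|$ is only justified when $w$ is typical; for the (at most one) vertex $w_0\in S\cap D$ the cited properties give no control on $|N(v)\cap N(w_0)|$, which could in principle inflate $|N(v)\setminus N'(v)|$. This is harmless: define $N'(v)$ relative to $S'$ rather than $S$, since pairwise disjointness of the sets $\{v\}\cup N'(v)$ over $v\in S'$ is all the independence argument requires.
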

We note that the proof of \Cref{l:inA2-A1} needs \Cref{c:regular,c:backwards,c:localconnection} (see \Cref{sec:dominating}).

Finally, we observe that  \Cref{l:inA3-A2,l:inA2-A1} together with \Cref{c:bound} imply that whp the $\Boot_2(\gamma)$-process stabilises after the second round, without fully percolating. We are now ready to prove the $0$-statement of our main theorem.
\begin{proof}[Proof of the 0-statement of \Cref{t:mainThm}]
    Let $K \in \mathbb{N}$ and let $(G_n)_{n \in \mathbb{N}}$ be a sequence of graphs such that $G_n \in \mathcal{H}(K)$ and $\delta(G_n) \to \infty$ as $n \to \infty$.
    We write $G:= G_n$.

  Let $\varepsilon >0$ and let  \[
    p \leq \frac{1}{2}-\left(\frac{1}{2}+\eps\right) \sqrt{\frac{\log \dmin}{\dmin}}.
  \]
  Again, since the function $d \mapsto \frac{1}{2}-\left(\frac{1}{2}+\eps\right) \sqrt{(\log d)/d}$ is increasing in $d$, it follows from \Cref{l:inA2-A1} that for every $x \in V(G)$
  \[
  \Pr[\big]{x \in \hat{A}_2 \setminus \hat{A}_1} = o(1).
  \]
  Furthermore, since $\gamma(x) = o(\sigma(x) d(x))$, it is a simple consequence of Chernoff's inequality (\Cref{l:Chernoff}) that for every $x \in V(G)$
  \[
 \Pr[\big]{x \in \hat{A}_1 \setminus \hat{A}_0} \leq \Pr[\Big]{\Bin\bigl(d(x), p\bigr) \geq \frac{d(x)}{2}-2\gamma(x)} = o(1).
  \]
Hence, by Markov's inequality whp $\big|\hat{A}_2 \setminus \hat{A}_0 \big| = o\bigl(|V(G)|\bigr)$.
It follows from Chernoff's inequality (\Cref{l:Chernoff}) that whp $\big|\hat{A}_0\big|=\big|A_0\big| \leq \frac{3}{4} |V(G)|$, and hence whp
  \[
  \big|\hat{A}_2\big| = \big|\hat{A}_2 \setminus \hat{A}_0 \big| + \big|\hat{A}_0\big| < |V(G)|.
  \]
  On the other hand, by \Cref{l:inA3-A2} there exists $\beta > 0$ such that for every $x \in V(G)$
  \[
  \Pr[\big]{x \in \hat{A}_3 \setminus \hat{A}_2} = \exp\left( - \beta\gamma(x)^2 \log d(x) \right) = \exp\bigl( - \omega(\delta(G))\bigr).
  \]
  By \Cref{c:bound}, we have $|V(G)| \leq \exp(K \delta(G))$, and using the union bound we get that whp $\hat{A}_3 =  \hat{A}_2$. It follows that whp $A_i \subseteq \hat{A}_i = \hat{A}_2 \neq  V(G)$ for all $i \geq 2$. Therefore
  \[
  \Phi(p,G) := \Pr{{\bf A}_p \text{ percolates on } G}= \Pr*{\bigcup_{i=0}^\infty A_i  = V(G)} = o(1),
  \]
completing the proof.
\end{proof}

\section{Eleven rounds suffice}\label{sec:11suffice}

In this section we prove the auxiliary lemmas (\Cref{l:constant,l:dimred,l:super-exponential}) needed for the proof of the $1$-statement of \Cref{t:mainThm} in \Cref{s:1-statement}.

Throughout this section we let $x\in V(G)$, $c < \frac{1}{2}$, $p = \frac{1}{2} - c\sigma(x)$, where $\sigma(x) = \sqrt{\frac{\log d(x)}{d(x)}}$, and we fix a $K \in \mathbb{N}$ such that $G \in \mathcal{H}(K)$. We start the section with a simple corollary of the Central Limit Theorem (\Cref{l:CLT}).

\begin{lemma}\label{l:CLT-corollary}
  Let $d' = \Theta(d(x))$ and $C \in \mathbb{R}$ be a constant. Then
  \[ \Pr*{\Bin(d', p) \geq \frac{d'}{2} + C} = (1+o(1))\Pr*{\Bin(d', p) \geq \frac{d'}{2}}. \]
\end{lemma}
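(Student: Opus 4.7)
The plan is to apply \Cref{l:CLT} after standardizing both binomial events, and then compare the resulting Gaussian tails using the asymptotic $\Pr{N(0,1) \geq t} \sim (t\sqrt{2\pi})^{-1}\exp(-t^2/2)$ from the same lemma. Setting $\mu := d'p$ and $\tau := \sqrt{d'p(1-p)}$, the two events become $\{(\Bin(d',p)-\mu)/\tau \geq f(d')\}$ and $\{(\Bin(d',p)-\mu)/\tau \geq g(d')\}$, where $f(d') := (d'/2-\mu)/\tau$ and $g(d') := f(d') + C/\tau$.

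Plugging in $p = \tfrac{1}{2} - c\sigma(x)$ and using $d' = \Theta(d(x))$, one computes $d'/2 - \mu = cd'\sigma(x) = \Theta(\sqrt{d'\log d'})$ and $\tau = \Theta(\sqrt{d'})$, so $f(d') = \Theta(\sqrt{\log d'})$; in particular $f(d') \to \infty$ and $f(d') = o((d')^{1/6})$. Since $C$ is a constant we also have $C/\tau = O(1/\sqrt{d'}) = o(1)$, so the same asymptotics hold for $g(d')$. Hence \Cref{l:CLT} applies to both and yields
\[ \Pr*{\Bin(d',p) \geq \tfrac{d'}{2}} = (1+o(1))\Pr{N(0,1) \geq f(d')} \quad \text{and} \quad \Pr*{\Bin(d',p) \geq \tfrac{d'}{2}+C} = (1+o(1))\Pr{N(0,1) \geq g(d')}, \]
and the Gaussian tail estimate applies to each since $f(d'), g(d') \to \infty$.

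It then remains to show the ratio of the two Gaussian tails is $1+o(1)$. The polynomial prefactor contributes $f(d')/g(d') = 1 + o(1)$, since $C/\tau = o(1)$ while $f(d') \to \infty$. For the exponential factor, $f(d')^2 - g(d')^2 = -(C/\tau)(2f(d') + C/\tau)$, which is $O(\sqrt{\log d'}/\sqrt{d'}) = o(1)$, so $\exp(\tfrac12(f(d')^2 - g(d')^2)) = 1 + o(1)$ as well. Multiplying these factors gives the claim. There is no real conceptual obstacle; the only point worth flagging is verifying that the shift $C/\tau$, when multiplied against the Gaussian location $f(d') = \Theta(\sqrt{\log d'})$, still tends to zero, which uses crucially that $C$ is a constant independent of $d'$.
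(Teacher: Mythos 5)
Your proposal is correct and follows essentially the same route as the paper: standardise both events, apply \Cref{l:CLT} to reduce to Gaussian tails at thresholds differing by $C/\sqrt{d'p(1-p)} = O((d')^{-1/2})$, and note that the resulting change in the exponent is $O(\sqrt{\log d'}/\sqrt{d'}) = o(1)$. No gaps.
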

\begin{proof}
   Observe that since $(d')^{-1/2} \ll \sigma(x) \ll (d')^{-1/3}$, the function
    \[ f_L(d') \coloneqq \frac{cd' \sigma(x) + L}{\sqrt{d'p(1-p)}} \]
    satisfies $1 \ll f_L \ll (d')^{1/6}$ for any constant $L$. 
    Therefore, by \Cref{l:CLT}, it suffices to show that
    \begin{equation}\label{e:CLT-corollary-pt1}
    \Pr{N(0,1) \geq f_C(d')} = (1+o(1)) \Pr{N(0,1) \geq f_0(d')}.
    \end{equation}
    Since $|f_C - f_0| = O((d')^{-1/2})$, we may estimate
    \[
    \frac{1}{f_C\cdot\sqrt{2\pi}} \cdot \exp\left(-\frac{f_C^2}{2}\right) = \frac{1+o(1)}{f_0 \cdot \sqrt{2\pi}} \cdot \exp\biggl(-\frac{f_0^2}{2} + O\bigl((f_C-f_0)\cdot f_0\bigr)\biggr) = \frac{1+o(1)}{f_0\cdot \sqrt{2\pi}} \cdot \exp\left(-\frac{f_0^2}{2}\right). 
    \]
    By the second part of~\Cref{l:CLT}, this implies~\eqref{e:CLT-corollary-pt1}, hence proving the lemma.
\end{proof}

We proceed to prove the aforementioned auxiliary lemmas.

\subsection{Proof of Lemma \ref{l:constant}}

As in \Cref{l:constant}, we let $m \in \mathbb{N} \cup \{0\}$ and aim to prove 
  \[
    \Pr*{x \in A_2(m)} \geq \frac{3}{4} + o(1).
  \]

Let $X_0:=N(x) \cap A_0$ be the set of neighbours of $x$ which are initially infected and note that 
$$|X_0| \sim \Bin(d(x),p).$$ 
Next we let \[X_1:=N(x) \cap (A_1(m)\setminus A_0)\] be the set of the neighbours of $x$ which become infected in the first round.  We note that $x \in A_2(m)$ if and only if either $x \in A_0$ or $|X_0|+|X_1| \geq \frac{d(x)}{2}+m$, and so we have
  \begin{equation}\label{e:splitprob}
    \Pr*{ x \in A_2(m) } = \Pr*{x \in A_0 }+  \Pr*{x \notin A_0} \cdot \Pr[\Big]{|X_0|+|X_1| \geq \frac{d(x)}{2}+m \given x \notin A_0 }.
  \end{equation}

  In order to calculate the second term in \eqref{e:splitprob} we consider the event $\mathcal{E}_0$ that $|X_0| \geq \frac{d(x)}{2}+m-\ell$ and the event $\mathcal{E}_1$ that $|X_1| \geq \ell$ for a judicious choice of $m \leq \ell \leq \frac{d(x)}{2}+m$ which will be given later.
  By \eqref{e:splitprob}, we have
  \begin{align}
    \Pr*{x \in A_2(m) } & = \Pr*{x \in A_0 }+ \Pr*{x \notin A_0} \cdot \Pr*{\mathcal{E}_0 \wedge \mathcal{E}_1 \given x \notin A_0} \nonumber   \\
    & = p + (1-p)\cdot\Pr*{ \mathcal{E}_0 \given x\not\in A_0} \cdot\Pr*{\mathcal{E}_1 \given \mathcal{E}_0 \wedge \left(x \notin A_0\right)} \nonumber \\
    & =p + (1-p)\cdot \Pr*{ \mathcal{E}_0} \cdot \Pr*{\mathcal{E}_1 \given \mathcal{E}_0 \wedge \left(x \notin A_0\right)} \label{e:splitprobcond},
  \end{align}
  where the last equality is because $\mathcal{E}_0$ is independent of the event that $x \notin A_0$.

For ease of notation, let us write $\mathbb{P}^*$ for the probability distribution conditioned on the events $\{x \notin A_0\}$ and $\mathcal{E}_0$. Note that, for $y \in N(x)$, we may use
the independence of the events $\{y \notin A_0\}$ and $\{x \notin A_0\}$ as well as \Cref{l:Binvariant} on correlations to deduce that
\begin{align}\label{e:ynotingivenR}
    \Prs*{y \notin A_0} = \Pr{y \notin A_0 \mid  (x \notin A_0) \wedge \mathcal{E}_0} = \Pr{y \notin A_0 \given \mathcal{E}_0} = (1+o(1))(1-p).
\end{align}

We begin with estimating the  conditional expectation of $|X_1|$.
  \begin{claim}\label{c:exp}
    \[\Exs[\big]{|X_1| }  = \Ex[\big]{|X_1|  \mid (x \notin A_0) \wedge \mathcal{E}_0} = \Omega\left(\frac{d(x)^{1-2c^2}}{\sqrt{\log d(x)}}\right).
    \]
  \end{claim}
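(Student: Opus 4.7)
The plan is to estimate $\Prs{y \in X_1}$ for each $y \in N(x)$ and sum via linearity of expectation over the $d(x)$ vertices in $N(x)$. Decompose $\{y \in X_1\} = \{y \notin A_0\} \cap B$ with $B := \{|N(y) \cap A_0| \ge d(y)/2 + m\}$. The first event is already handled by~\eqref{e:ynotingivenR}, which gives $\Prs{y \notin A_0} = (1+o(1))(1-p) = \Omega(1)$, so the main task is to lower bound $\Prs{B \mid y \notin A_0}$.

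To exploit that the conditioning in $\mathbb{P}^*$ touches very few neighbours of $y$, I would invoke \Cref{c:backwards} with $\ell = 1$ to obtain $|N(y) \cap B(x,1)| \le K$, so $W := N(y) \cap N(x)$ has size at most $K-1$. Set $N'(y) := N(y) \setminus (N(x) \cup \{x\})$; by \Cref{c:regular}, $|N'(y)| = d(y) - O(1) = \Theta(d(x))$. Since $x \notin A_0$ under $\mathbb{P}^*$, we have $|N(y) \cap A_0| = |A_0 \cap W| + |A_0 \cap N'(y)|$, where $A_0 \cap N'(y)$ is independent of $y$, of $x$, and of every vertex in $N(x)$, hence of the whole conditioning. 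Conditioning further on any of the at most $2^{K-1}$ initial-infection patterns $\omega \subseteq W$, the event $B$ reduces to $\Bin(|N'(y)|, p) \ge d(y)/2 + m - |\omega|$, a binomial exceeding its symmetric midpoint by a constant.

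Applying \Cref{l:CLT-corollary} uniformly across patterns and averaging, I obtain
\[ \Prs{B \mid y \notin A_0} = (1+o(1))\, \Pr{\Bin(|N'(y)|, p) \ge |N'(y)|/2}. \]
A direct application of \Cref{l:CLT} with $f(d') := |N'(y)|^{1/2}(1/2-p)/\sqrt{p(1-p)}$, noting that $f(d')^2 = 4c^2\log d(x) + o(1)$ and $f(d') \to \infty$, gives $\Pr{\Bin(|N'(y)|, p) \ge |N'(y)|/2} = \Theta(d(x)^{-2c^2}/\sqrt{\log d(x)})$. Combining, $\Prs{y \in X_1} = \Omega(d(x)^{-2c^2}/\sqrt{\log d(x)})$, and summing over the $d(x)$ choices of $y$ yields the claim.

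The main technical subtlety is handling the conditioning on $\mathcal{E}_0$, whose defining Bernoulli variables (the indicators of $z \in A_0$ for $z \in N(x)$) also contribute to $|N(y) \cap A_0|$ precisely when $z \in W$. The local structure of $\mathcal{H}(K)$ resolves this: \Cref{c:backwards} guarantees only $O(1)$ shared indicators, and \Cref{l:CLT-corollary} absorbs the resulting constant perturbation to the binomial threshold, reducing the bound to a clean Gaussian tail estimate.
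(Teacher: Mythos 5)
Your proposal is correct and follows essentially the same route as the paper: decompose $\Prs{y \in X_1}$ into $\Prs{y \notin A_0}$ times the conditional probability of the infection event, use \Cref{c:backwards} to show the conditioning only touches $O(1)$ neighbours of $y$, absorb the resulting constant shift of the binomial threshold via \Cref{l:CLT-corollary}, and evaluate the Gaussian tail via \Cref{l:CLT} before summing over $N(x)$. The only (immaterial) difference is that you condition explicitly on the $2^{O(K)}$ infection patterns on $N(y) \cap N(x)$ and average, whereas the paper handles the overlap by stochastic domination.
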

\begin{proof}[Proof of \Cref{c:exp}]
  Let us suppose that $x \notin A_0$ and let $y \in N(x)$. We have
  \begin{align}\label{e:splitprobyins}
    \Prs[\Big]{y \in X_1} = \Prs{y \notin A_0}\cdot \Prs[\Big]{|N(y) \cap A_0| \geq \frac{d(y)}{2}+m \given y \notin A_0}.
  \end{align}

  To bound the second term in \eqref{e:splitprobyins}, we note that $\big|N(y) \cap \big(\{x\} \cup N(x)\big)\big| \leq K$ by \Cref{c:backwards}.
  Furthermore, we have $d(y) \geq d(x) - K$ since $G$ is $K$-locally almost regular by \Cref{c:regular}.
  Hence, conditioned on the events $\{x, y \notin A_0\}$ and $\mathcal{E}_0$, $|N(y) \cap A_0|$ stochastically dominates a $\Bin(d', p)$ random variable for $d' := d(x)-2K$. Recalling that $p = \frac{1}{2} -c\sigma(x) = \frac{1}{2} - c \sqrt{\frac{\log d(x)}{d(x)}}$, we have
  \begin{equation}\label{e:infectedneighbourconditional}
    \Prs*{|N(y) \cap A_0| \geq \frac{d(y)}{2}+m \given y \not\in A_0} \geq \Pr*{\Bin\left(d', p\right)\geq \frac{d'}{2}+m + K}.
  \end{equation}
From \eqref{e:splitprobyins}, \eqref{e:ynotingivenR}, \eqref{e:infectedneighbourconditional} and \Cref{l:CLT-corollary}, we obtain
\begin{align}\label{e:splitprobyins2}
    \Prs[\big]{y \in X_1} \geq (1+o(1))(1-p) \cdot \Pr*{\Bin\left(d', p\right)\geq \frac{d'}{2}}.
  \end{align}

We will bound the second term on the right-hand side of \eqref{e:splitprobyins2} by applying \Cref{l:CLT} for $\Bin(d', p)$. To do that, we need to scale the binomial random variable accordingly. Note that
  \[
    d' \left(\frac{1}{2} - p\right) = c d' \sigma(x) =  \left(1 + O\left(\frac{1}{d(x)}\right)\right) c \sqrt{d(x) \log d(x)}
  \]
  and
  \[
    d' p(1-p)=d' \left(\frac{1}{4}-c^2\sigma(x)^2\right)=\left(1+O\left(\frac{\log d(x)}{d(x)}\right)\right) \frac{d(x)}{4}.
  \]
  We may apply \Cref{l:CLT} to bound the right-hand side of \eqref{e:infectedneighbourconditional} and get
  \begin{equation}\label{e:estimatingBinomial}
       \Pr*{\Bin\left(d', p\right)\geq \frac{d'}{2}}=(1+o(1)) \Pr{N(0,1)\geq f(d')}
  \end{equation}
  for
  \[
    f(d') := \frac{d'(1/2 - p)}{\sqrt{d' p(1-p)}} = \left(1+O\left(\frac{\log d(x)}{d(x)}\right)\right) 2c \sqrt{\log d(x)}.
  \]
  For this value of $f$, we may estimate
  \begin{align}
       \Pr{N(0,1)\geq f(d')} &=\frac{1+o(1)}{f(d') \sqrt{2 \pi}} \exp\left(-\frac{1}{2}f(d')^2\right)          \nonumber\\
       &=\frac{1+o(1)}{2c\sqrt{2\pi\log d(x)}} \exp\bigl(-2 c^2 \log d(x) + O(1)\bigr) \nonumber\\
       &=\Omega\left(\frac{d(x)^{-2c^2}}{\sqrt{\log d(x)}}\right).\label{e:estimatingNormal}
  \end{align}
  Hence, by \eqref{e:splitprobyins2}, \eqref{e:estimatingBinomial} and \eqref{e:estimatingNormal}, we obtain
  \begin{equation}\label{e:expectation}
      \Exs*{|X_1| } =   d(x) \cdot  \Prs[\Big]{y \in X_1} = \Omega\left(\frac{d(x)^{1-2c^2}}{\sqrt{\log d(x)}}\right),
  \end{equation}
  as desired. 
\end{proof}

We now proceed towards bounding the conditional variance of $|X_1|$. Let $M$ be the set of pairs of distinct vertices $y, z \in N(x)$ that have \emph{at most} two common neighbours (including $x$). We continue with the following claim, which states the events $\{y \in X_1\}$ and $\{z \in X_1\}$ for such pairs are not closely correlated.

  \begin{claim}\label{c:Var1N}
    If $(y,z) \in M$, then
    \[
      \Prs*{(y \in X_1) \wedge (z \in X_1)} - \Prs{y \in X_1} \cdot \Prs{z \in X_1} = o\left(\Prs{y \in X_1} \cdot \Prs{z \in X_1}\right).
    \]
  \end{claim}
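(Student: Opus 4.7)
The plan is to show that the events $\{y \in X_1\}$ and $\{z \in X_1\}$ depend on essentially disjoint collections of initial infections, so that they are asymptotically independent under $\Prs$. Set $W := \{x\} \cup N(x)$ and decompose $N(y) = (N(y) \cap W) \cup V_{yz} \cup V_y$ and analogously for $N(z)$, where
\[
V_y := N(y) \setminus (W \cup N(z)), \qquad V_z := N(z) \setminus (W \cup N(y)), \qquad V_{yz} := (N(y) \cap N(z)) \setminus W.
\]
Applying \Cref{c:backwards} at level $\ell = 1$ gives $|N(y) \cap W|, |N(z) \cap W| = O(K)$, and since $(y,z) \in M$ and $x \in W$, we have $|V_{yz}| \le 1$. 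In particular $|V_y| = d(y) - O(K)$ and $|V_z| = d(z) - O(K)$.

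Next, observe that under $\Prs$ the infection indicators on $V_y \cup V_z \cup V_{yz}$ are i.i.d.\ $\textup{Ber}(p)$ and independent of $A_0 \cap W$, since $\Prs$ only conditions on events inside $W$. I would let $\mathcal{F}$ denote the $\sigma$-algebra generated by $A_0 \cap (W \cup V_{yz})$, and set $\iota_y := \mathbb{1}[y \in A_0]$, $\iota_z := \mathbb{1}[z \in A_0]$, $Q_y := |A_0 \cap N(y) \cap W|$, $Q_z := |A_0 \cap N(z) \cap W|$ and $B_{yz} := |A_0 \cap V_{yz}|$. The event $\{y \in X_1\}$ then reads $\{\iota_y = 0\} \cap \{|A_0 \cap V_y| \ge d(y)/2 + m - Q_y - B_{yz}\}$, with an analogous description for $\{z \in X_1\}$, and all of $\iota_y, \iota_z, Q_y, Q_z, B_{yz}$ are $\mathcal{F}$-measurable and deterministically $O(K)$. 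Conditionally on $\mathcal{F}$, the variables $|A_0 \cap V_y| \sim \Bin(|V_y|, p)$ and $|A_0 \cap V_z| \sim \Bin(|V_z|, p)$ are independent, so by \Cref{l:CLT-corollary} (which absorbs both the $O(K)$ threshold shift and the $O(K)$ deficit in the number of trials into a $(1+o(1))$ factor that is uniform over $\mathcal{F}$), integrating over $\mathcal{F}$ yields
\[
\Prs{y, z \in X_1} = (1 + o(1))\,\Prs{\iota_y = \iota_z = 0} \cdot \Pr*{\Bin(d(y), p) \ge d(y)/2 + m} \cdot \Pr*{\Bin(d(z), p) \ge d(z)/2 + m}.
\]
The same argument applied to each vertex separately gives matching factorised expressions for $\Prs{y \in X_1}$ and $\Prs{z \in X_1}$.

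What remains is to show $\Prs{\iota_y = \iota_z = 0} = (1 + o(1))\,\Prs{\iota_y = 0}\,\Prs{\iota_z = 0}$, which I would obtain from the extended form of \Cref{l:Binvariant} applied to the binomial $|A_0 \cap N(x)|$ with $\mathcal{E}_0$ playing the role of $\{X \ge m\}$: conditioning further on one or both of $\iota_y, \iota_z$ perturbs $\Pr{\mathcal{E}_0}$ only by a $(1+o(1))$ factor, so the three quantities $\Prs{\iota_y = 0}$, $\Prs{\iota_z = 0}$ and $\Prs{\iota_y = \iota_z = 0}$ all agree with $(1-p)$, $(1-p)$ and $(1-p)^2$ up to $(1+o(1))$. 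Dividing through then gives $\Prs{y, z \in X_1} = (1 + o(1))\,\Prs{y \in X_1}\,\Prs{z \in X_1}$, which is equivalent to the claim. The main obstacle is ensuring that the $(1+o(1))$ correction from \Cref{l:CLT-corollary} is genuinely uniform over the realisation of $\mathcal{F}$; this holds because the shift $Q_y + B_{yz}$ and the trial deficit $d(y) - |V_y|$ are both deterministically $O(K)$, so the corollary applies with an error depending only on $d(x)$.
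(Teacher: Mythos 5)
Your proposal is correct and follows essentially the same route as the paper's proof: both isolate the $O(K)$-sized overlap of $N(y)$ and $N(z)$ with $B(x,1)$ and with each other (using \Cref{c:backwards} and the defining property of $M$), exploit conditional independence of the two neighbourhood counts once that overlap is exposed, absorb the resulting $O(K)$ shifts in threshold and trial count via \Cref{l:CLT-corollary}, and handle the residual correlation of $\{y\notin A_0\}$ and $\{z\notin A_0\}$ under $\mathbb{P}^*$ with \Cref{l:Binvariant}. The only difference is bookkeeping: you condition on the full configuration $A_0\cap(W\cup V_{yz})$ and argue the $(1+o(1))$ error is uniform over the boundedly many possible shifts, whereas the paper sandwiches the conditional probabilities between two binomial tails via stochastic domination on the extreme configurations $F=\emptyset$ and $F\subseteq A_0$ — these are equivalent in substance.
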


  \begin{proof}
    Let $d' = d - 2K$, $d'_0 = d/2 + m - 4K$ and $d'_1 = d/2 + m + 4K$. 
    We claim that for $w \in \{y, z\}$,
    \begin{equation}\label{e:Var1N:dom1}
    \Pr[\big]{\Bin(d', p) \geq d'_0} \geq \Prs*{w \in X_1 \given w \notin A_0} \geq \Pr[\big]{\Bin(d', p) \geq d'_1}.
    \end{equation}
    Indeed, assume $w = y$ without loss of generality, and observe that
    \begin{align*}
    \Prs*{y \in X_1 \given y \notin A_0} &\geq \Prs*{y \in X_1 \given (y \notin A_0) \wedge (N(y) \cap N(x) \cap A_0 = \emptyset)} \\
    &= \Pr*{y \in X_1 \given (x, y \notin A_0) \wedge (N(y) \cap N(x) \cap A_0 = \emptyset)},\\
    &\geq \Pr*{\Bin(d(y) - K, p) \geq d(y)/2 + m}.
    \end{align*}
    where the first step follows by stochastic domination. Since $d(y) = d \pm K$ by \Cref{c:regular},  this proves the last inequality in~\eqref{e:Var1N:dom1}. A similar argument, conditioning on $N(y) \cap N(x) \subseteq A_0$ instead and using that $\Pr{\Bin(d', p) \geq d_0'} \geq \Pr{\Bin(d(y), p) \geq d_0'+3K}$, proves the first inequality.
    
    Let $F = \{w \in V(G) \setminus \{x,y, z\} : |N(w) \cap \{x, y, z\}| \geq 2\}$, and observe that $|F| \leq 2K$ by \Cref{c:backwards} and our assumption that $y$ and $z$ have at most one common neighbour besides $x$. Since $F$ contains $N(y) \cap N(z)$, if we condition on $(F \cup \{x,y, z\}) \cap A_0$ then the events $\{y \in X_1\}$ and $\{z \in X_1\}$ are conditionally independent and binomially distributed and so by a similar argument, conditioning on either $F = \emptyset$ or $F \subseteq A_0$ allows us to deduce that
    \begin{equation}\label{e:Var1N:dom2}
        \Pr[\big]{\Bin(d', p) \geq d'_0}^2 \geq \Prs*{(y \in X_1) \wedge (z \in X_1) \given y, z \notin A_0} \geq \Pr[\big]{\Bin(d', p) \geq d'_1}^2. 
    \end{equation} 
    By~\eqref{e:Var1N:dom1},~\eqref{e:Var1N:dom2} and~\Cref{l:CLT-corollary}, we obtain
    \begin{equation}\label{e:Var1N:almost}
       \Prs*{(y \in X_1) \wedge (z \in X_1) \given y, z \notin A_0} = (1+o(1)) \Prs*{y \in X_1 \given y \notin A_0}\cdot \Prs*{z \in X_1 \given z \notin A_0}.
    \end{equation}
    Therefore, it only remains to show that 
    \begin{equation}\label{e:Var1N:bayes}
    \Prs{y, z \notin A_0} = (1+o(1)) \Prs{y \notin A_0} \cdot \Prs{z \notin A_0},
    \end{equation}
    since multiplying~\eqref{e:Var1N:almost} and~\eqref{e:Var1N:bayes} finishes the proof of the lemma. 
    To do so, let $\mathbb{P}'$ the probability measure obtained by conditioning on $\{x \notin A_0\}$, and observe that, by Bayes' theorem,
    \begin{equation}\label{e:Var1N:bayes0}
    \Prs{y, z \notin A_0} = \Prp{y, z \notin A_0 \given \mathcal{E}_0} = \frac{\Prp{\mathcal{E}_0 \given y, z \notin A_0}}{\Prp{\mathcal{E}_0}} \cdot \Prp{y, z \notin A_0}.
    \end{equation}
    The fraction is $1+o(1)$ by \Cref{l:Binvariant}. Since $\Prp{y, z \notin A_0} = \Pr{y, z \notin A_0} = (1-p)^2$ and $\Prs{y \notin A_0}\Prs{z \notin A_0} = (1+o(1))(1-p)^2$ by \eqref{e:ynotingivenR}, \eqref{e:Var1N:bayes0} implies \eqref{e:Var1N:bayes} and proves the lemma.
  \end{proof}
  
  As a direct consequence of \Cref{c:Var1N} we have
\begin{align}
 \sum_{(y, z) \in M} \Prs*{(y \in X_1) \wedge (z \in X_1)} - \Prs{y \in X_1} \cdot \Prs{z \in X_1} =  o\left(\Exs{|X_1|}^2\right).\label{eq:varMpart}
\end{align}

This allows us to obtain our bound on the conditional variance of $|X_1|$.
  \begin{claim}\label{c:var}
    \[\mathbb{V}^*\left[ |X_1|  \right]= o\left(\Exs{|X_1|}^2\right). 
\]
  \end{claim}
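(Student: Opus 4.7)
My plan is to expand the conditional variance as a sum of individual variances plus a sum of covariances:
\begin{equation*}
\mathbb{V}^*\bigl[|X_1|\bigr] = \sum_{y \in N(x)} \mathbb{V}^*\bigl[\mathbf{1}_{y \in X_1}\bigr] + \sum_{\substack{y \neq z\\ y, z \in N(x)}} \Bigl(\Prs{(y \in X_1) \wedge (z \in X_1)} - \Prs{y \in X_1}\Prs{z \in X_1}\Bigr).
\end{equation*}
The diagonal sum is bounded above by $\Exs{|X_1|}$, which tends to infinity by \Cref{c:exp} (since $c < \frac{1}{2}$) and therefore is $o\bigl(\Exs{|X_1|}^2\bigr)$. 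For the off-diagonal sum I would split pairs into those in $M$ and those in $M^c$. The $M$ contribution is already $o\bigl(\Exs{|X_1|}^2\bigr)$ by~\eqref{eq:varMpart}, so the task reduces to bounding the $M^c$ contribution.

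The key step is to prove the purely combinatorial estimate $|M^c| = O(d(x))$, using the structural properties of $\mathcal{H}(K)$. A pair $(y, z) \in M^c$ shares at least two common neighbours besides $x$, and any such common neighbour must lie in $S(x, 1) \cup S(x, 2)$. For typical $y, z \in S_0(x, 1)$, \Cref{c:localconnection}\ref{i:cherry} allows at most one common neighbour in $S_0(x, 2)$, forcing at least one further common neighbour $w$ to lie in $N(x) \cup \bigl(D \cap S(x, 2)\bigr)$. By \Cref{c:backwards} we have $|N(y) \cap N(x)| \leq K - 1$ and $|N(w) \cap N(x)| \leq 2K$ for $w \in S(x, 2)$, and by \Cref{c:localconnection}\ref{i:sparse} we have $|N(y) \cap D \cap S(x, 2)| \leq K$. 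Combining these shows that for each fixed typical $y$ there are only constantly many (in terms of $K$) valid $z$. The at most one non-typical $y \in D \cap S(x, 1)$ permitted by \Cref{c:localconnection}\ref{i:small} contributes at most $d(x)$ further pairs, so $|M^c| = O(d(x))$.

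Bounding each covariance trivially by $\Prs{(y \in X_1) \wedge (z \in X_1)} \leq 1$, the $M^c$ contribution is $O(d(x))$. On the other hand, \Cref{c:exp} gives $\Exs{|X_1|}^2 = \Omega\bigl(d(x)^{2 - 4c^2}/\log d(x)\bigr)$, and since $c < \frac{1}{2}$ makes $2 - 4c^2 > 1$, this bound dominates $d(x)$. Hence all three contributions are $o\bigl(\Exs{|X_1|}^2\bigr)$, yielding the claim. I expect the main obstacle to be the combinatorial bound $|M^c| = O(d(x))$: it requires carefully juggling the various parts of \Cref{c:backwards} and \Cref{c:localconnection} to account for the different ways two neighbours of $x$ can share more than one common neighbour besides $x$ in the almost-coordinate structure, and for separating the contribution of non-typical vertices in $D$.
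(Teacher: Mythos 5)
Your proposal is correct and follows essentially the same route as the paper: the same variance decomposition, the same use of~\eqref{eq:varMpart} for the pairs in $M$, and the same combinatorial bound $O(d(x))$ on the number of bad pairs, derived from \Cref{c:backwards} and \Cref{c:localconnection} in the same way (the paper counts via the $O(d(x))$ potential common neighbours in $N(x) \cup (D \cap S(x,2))$, each serving $O(1)$ pairs, which is your argument with the order of quantifiers swapped). The final comparison $O(d(x)) = o\bigl(\Exs{|X_1|}^2\bigr)$ via $2-4c^2>1$ also matches.
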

 \begin{figure}
    \centering
    \begin{tikzpicture}[scale=1.2, thick, every node/.style={circle}]
      \node (V) at (-1,0) [circle,draw, fill, scale=0.5]  {};
      \node[] at (-1,0.4) {$x$};
      \begin{scope}
         \draw[clip, rotate=90] (0, -3.8) ellipse (3cm and 0.8cm);
         \fill[gray] (3,2) -- (3.6,4) -- (4,4) -- (4.5,2) -- cycle;
      \end{scope}
      \begin{scope}
         \draw[clip, rotate=90] (0, -2) ellipse (2.8cm and 0.8cm);
         \fill[gray] (1,2.4) -- (3,2.4) -- (3,3.5) -- (1,3.5) -- cycle;
      \end{scope}

      \draw[rotate=90] (0, -2) ellipse (2.8cm and 0.8cm);
      \node[] at (1.9,3.1) {$N(x)$};
      \path[draw=black] (-1,0) -- (1.8,2.75);
      \path[draw=black] (-1,0) -- (1.8,-2.75);
      \draw[rotate=90] (0, -3.8) ellipse (3cm and 0.8cm);
      \node[] at (3.8,3.3) {$S(x, 2)$};

      \draw[rotate=90, fill=blue, fill opacity=0.4] (1.2, -2) ellipse (1cm and 0.3cm);
      \node[] at (1.53,0.7) {$X_0$};
      \draw[rotate=90] (-1.3, -2) ellipse (1cm and 0.35cm);
      \fill[pattern=north east lines, rotate=90] (-1.3, -2) ellipse (1cm and 0.35cm);
      \node[] at (1.55,-0.3) {$X_1$};

      \node (W1) at (2,-0.7) [circle,draw, fill=red, scale=0.5] {};
      \node (W2) at (2,-1.1) [circle,draw, fill=red, scale=0.5] {};
      \node (W2) at (2,-1.7) [circle,draw, fill=yellow, scale=0.5] {};
      \node (W2) at (2,-2) [circle,draw, fill=yellow, scale=0.5] {};

      \node (W2) at (3.8,2.6) [circle,draw, fill, scale=0.5] {};
      \node (W2) at (3.8,0.6) [circle,draw, fill, scale=0.5] {};
      \node (W2) at (3.8,-1.6) [circle,draw, fill, scale=0.5] {};
      \path[draw=black] (2,-0.7) -- (3.8,2.6);
      \path[draw=black] (2,-0.7) -- (3.8,0.6);
      \path[draw=black] (2,-1.1) -- (3.8,0.6);
      \path[draw=black] (2,-2) -- (3.8,-1.6);
      \path[draw=black] (2,-1.7) -- (3.8,-1.6);
      \path[draw=black] (2,-1.1) -- (3.8,2.6);
  \end{tikzpicture}
  \caption{The set $X_0 =N(x) \cap A_0$ is depicted in blue, and the set $X_1=N(x) \cap (A_1 \setminus A_0)$ is depicted patterned. The red pair of vertices has two common neighbours in $S(x, 2)$ and is thus in $M'$, while the yellow pair of vertices has just one common neighbour in $S(x, 2)$.}
  \label{f:Variance}
  \end{figure}
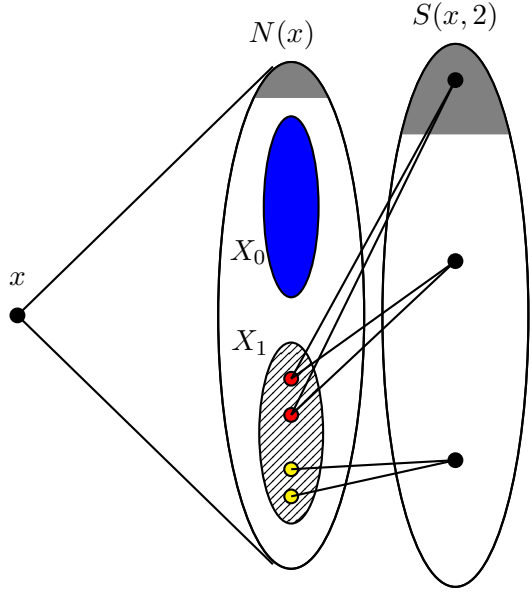
  \begin{proof}[Proof of \Cref{c:var}]
  Let $M'$ be the set of pairs of distinct vertices in $N(x)$ that have strictly more than two common neighbours (see Figure \ref{f:Variance}), which we will call {\em bad pairs}.
  We claim that $|M'| = O(d(x))$.
  Indeed, $|\binom{N(x)}{2} \setminus \binom{S_0(x,1)}{2}| = O(d(x))$
  by \Cref{c:localconnection}\ref{i:small}, so there are only $O(d(x))$ bad pairs not contained in $S_0(x,1)$.
      Also due to \Cref{c:localconnection}\ref{i:small}, we have that $N(x) \cup (D \cap S(x,2))$ has size $O(d(x))$, and due to \Cref{c:backwards} each of its elements can be a common neighbour of $O(1)$ many pairs of neighbours of $x$, contributing  $O(d(x))$ many bad pairs in total.
  Since by \Cref{c:localconnection}\ref{i:cherry} any two vertices in $S_0(x, 1)$ have at most one common neighbour in $S_0(x, 2)$, there are no further bad pairs to consider.
  Summing up, we have 
   \begin{align}
   |M'| =O(d(x)).\label{eq:var:Mcom}
   \end{align}
   
From \eqref{eq:varMpart} and \eqref{eq:var:Mcom} it follows that
  \begin{align*}
    \mathbb{V}^*\left[ |X_1|  \right] & = \sum_{y, z \in N(x)} \Prs*{(y \in X_1) \wedge (z \in X_1)} - \Prs{y \in X_1} \cdot \Prs{z \in X_1} \notag \\
    & \leq \sum_{(y, z) \in M} \Prs*{(y \in X_1) \wedge (z \in X_1)} - \Prs{y \in X_1} \cdot \Prs{z \in X_1} \notag\\
    & \qquad + \Exs{|X_1|} + |M'| \notag \\
    & \leq o\left(\Exs{|X_1|}^2\right) + O(d(x)) = o\left(\Exs{|X_1|}^2\right), 
  \end{align*}
  finishing the claim. 
  \end{proof}

\begin{proof}[Proof of \Cref{l:constant}]  
Recall that $|X_0| \sim \Bin(d(x),p)$, where $p=\frac{1}{2}-c \sigma(x) = \frac{1}{2}-c \sqrt{\frac{\log d(x)}{d(x)}} $ for $c < 1/2$. Taking $\ell: = \Exs*{|X_1|}/2$ we have $\ell = \Omega\bigl(d(x)^{1-2c^2}/\sqrt{\log d(x)}\bigr)$ by \Cref{c:exp}, and thus
   \begin{equation}\label{e:expE0}
    \Ex{|X_0|} = \left(\frac{1}{2}-c \sigma(x)\right) d(x) = \frac{d(x)}{2} - c \sqrt{d(x)\log d(x)} > \frac{d(x)}{2} + m - \ell.
  \end{equation}
By \Cref{p:MedianBin} we then have 
  \begin{equation}\label{e:probE0}
  \Pr{\mathcal{E}_0} = \Pr*{|X_0|\geq \frac{d(x)}{2}+m-\ell} \geq \frac{1}{2}.
  \end{equation}
  Furthermore, by \Cref{c:exp,c:var} it follows from Chebyshev's inequality that
  \begin{equation} \label{eq:probell}
    \Prs*{\mathcal{E}_1} = 1 - \Prs*{|X_1| < \ell} = 1- \Prs*{|X_1| < \frac{ \Exs*{|X_1|}}{2}}=1-o(1).
  \end{equation}
   Recalling from \eqref{e:splitprobcond} that $\Pr{x \in A_2(m)}= p +(1-p)\cdot \Pr{\mathcal{E}_0}\cdot \Prs{\mathcal{E}_1}$ and using \eqref{e:probE0} and \eqref{eq:probell},
  \begin{align*}
    \Pr*{x \in A_2(m) } & \geq  p + (1-o(1)) \frac{1-p}{2} \geq \frac{3}{4} +o(1),
  \end{align*}
finishing the proof of the lemma.
\end{proof}

\begin{remark}\label{r:finer1statement}
Let $\lambda \in \mathbb{R}$ satisfy $\lambda > 1/2$, and assume now that $c$ may depend on $d(x)$. The inequality in \eqref{e:expE0} holds as long as  
\[ c < \frac{1}{2} - \frac{\lambda \log \log d(x)}{\log d(x)}. \]
Using this, one may check that the $1$-statement of \Cref{t:mainThm} holds for every
\[ p \geq \frac{1}{2} - \frac{1}{2} \sqrt{\frac{\log \Delta(G)}{\Delta(G)}} + \frac{\lambda \log \log \Delta(G)}{\sqrt{\Delta(G) \log \Delta(G)}}, \] recovering the bound obtained in \cite[Theorem 2.1]{BaBoMo2009} (see \Cref{t:BBM}) if $G \in \mathcal{H}(K)$ is $n$-regular.
\end{remark}

\subsection{Proof of Lemma \ref{l:dimred}}

In this section we will prove \Cref{l:dimred} about the existence of a constant $\beta >0$ (independent of $x$) such that
  \[
    \Pr[\big]{ x \notin A_5 } \leq \exp\bigl( - \beta d(x)\bigr).
  \]
 Throughout this section we let $0 < \alpha_1 \ll \alpha_2 \ll \alpha_3 \ll K^{-10}$  be sufficiently small constants and let $d=d(x)$.

  Let   $\mathcal{E}$ be the event that $|N(x) \cap A_0| \geq \left(\frac{1}{2} - \alpha_1 \right)d$.
  Since $|N(x) \cap A_0| \sim \Bin(d,p)$ and $p = \frac{1}{2} - o(1)$, by Chernoff's inequality (\Cref{l:Chernoff}) it holds that
  \begin{equation}\label{e:conditionR}
    \Pr{\mathcal{E}} \geq 1 - \exp\left(-\frac{\alpha_1 d}{2}\right).
  \end{equation}
  In what follows we may assume that $\mathcal{E}$ holds deterministically.

  Suppose that $x \notin A_5$.
  Then, less than $\frac{d}{2}$ of the vertices in $N(x)$ are infected by the fourth round.
  In particular, there is some subset $T' \subseteq N(x)$ with $|T'|=\frac{d}{2}$ such that $T' \cap A_4 =\emptyset$.
  Similarly, by \Cref{c:regular}, each vertex $y \in T'$ has less than $\frac{d(y)}{2}\leq \frac{d+K}{2}$ neighbours in $A_3$.
  Since $T' \subseteq N(x) = S(x,1)$, it follows that
  \[
    \phi(T') := e\left(T',A_3 \cap S(x,2) \right) < \frac{(d+K)d}{4}.
  \]
  Hence, in order to prove \Cref{l:dimred}, it will be sufficient to show that
  \begin{equation}\label{e:boundmsum}
    \sum_{\substack{T' \subseteq N(x) \setminus A_0\\ |T'| = \frac{d}{2}}} \Pr*{\phi(T') < \frac{(d+K)d}{4}} \leq e^{-\alpha_2 d}.
  \end{equation}
  Note that, since we are assuming $\mathcal{E}$ holds, the number of possible sets $T'\subseteq N(x) \setminus A_0$ with $|T'| = \frac{d}{2}$ is at most
  \begin{equation}\label{e:numberofS}
    \binom{\left(\frac{1}{2} + \alpha_1\right)d}{d/2} = \binom{\left(\frac{1}{2} + \alpha_1\right)d}{\alpha_1 d} \leq \left(\frac{e \left(\frac{1}{2}+\alpha_1\right)}{\alpha_1}\right)^{\alpha_1 d} \leq \left(\frac{e}{\alpha_1}\right)^{\alpha_1 d} \leq e^{\alpha_2 d},
  \end{equation}
  by our choice of $\alpha_1$ and $\alpha_2$. Therefore, the following claim readily implies \eqref{e:boundmsum}.
\begin{claim} \label{c:individualsum}
For each $T'\subseteq N(x)$ with $|T'|=\frac{d}{2}$,
  \begin{equation}\label{e:individualsum}
    \Pr*{\phi(T') < \frac{d^2}{4}  + c\sqrt{d^3 \log d} }\leq e^{- 2\alpha_2 d}.
  \end{equation}
  \end{claim}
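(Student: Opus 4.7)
The plan is to translate the event $E_1 := \{\phi(T') < \frac{d^2}{4} + c\sqrt{d^3 \log d}\}$ into a structural statement about the lack of infection on $S(x,3)$, and then apply concentration via the projection property. On $E_1$ we will force the existence of a large set $\tilde T \subseteq S(x,2) \setminus A_3$, use the majority threshold at vertices of $\tilde T$ to conclude that many vertices in $S(x,3)$ must lie outside $A_2$, and finally combine \Cref{c:projection} with \Cref{l:constant} inside the projected subgraphs to show that this outcome is extremely unlikely.

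First I would use \Cref{c:regular,c:backwards} to observe that each $y \in T'$ has at least $d-2K$ neighbours in $S(x,2)$, so $e(T', S(x,2)) \geq d^2/2 - Kd$. Hence on $E_1$ the set $\tilde T := N(T') \cap (S(x,2) \setminus A_3)$ receives at least $d^2/4 - o(d^2)$ edges from $T'$, and since each $y \in S(x,2)$ has at most $2K$ neighbours in $B(x,1)$ by \Cref{c:backwards}, we obtain $|\tilde T| = \Omega(d^2/K)$. Each $y \in \tilde T$ has fewer than $d(y)/2 \leq (d+K)/2$ neighbours in $A_2$ (as $y \notin A_3$) but at least $d-3K$ neighbours in $S(x,3)$, so $e(\tilde T, S(x,3) \setminus A_2) \geq |\tilde T|(d/2 - O(K))$.

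Next I would partition $S(x,3)$ via \Cref{l:partitiondist} with $\ell = 3$ into $m \leq 4K^2 d^2$ classes $P_1, \ldots, P_m$ of pairwise distance at least $6$. For each $y \in S(x,3)$, \Cref{c:projection} yields $G(y) \in \mathcal{H}(K)$ with $V(G(y)) \cap B(x,2) = \emptyset$ and $|d_{G(y)}(w) - d_G(w)| \leq 3K$; writing $\tilde A_2(y)$ for the raised-threshold two-round majority bootstrap set inside $G(y)$ (with threshold increased by $3K$ and initial set $A_0 \cap V(G(y))$), a standard threshold-comparison argument gives $\{y \in \tilde A_2(y)\} \subseteq \{y \in A_2(G)\}$, while \Cref{l:constant} applied inside $G(y)$ yields $\Pr{y \notin \tilde A_2(y)} \leq 1/4 + o(1)$. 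Crucially, inside any fixed $P_j$ the events $\{y \notin \tilde A_2(y)\}_{y \in P_j}$ are mutually independent, because the balls $B_{G(y)}(y,2)$ are pairwise disjoint (pairwise $G$-distance $\geq 6 \geq 5$) and all lie in $V(G) \setminus B(x,2)$. A double-counting argument averaging the edge bound over partition classes will identify some $P_{j^\ast}$ for which $|P_{j^\ast} \setminus A_2| = \Omega(d)$ on $E_1$; a Chernoff bound (\Cref{l:Chernoff}) for each fixed $j$ combined with a union bound over the $m = O(d^2)$ classes should then produce the desired $e^{-2\alpha_2 d}$ estimate for sufficiently small $\alpha_2$.

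The principal obstacle I foresee is managing the random dependence between $\tilde T$ (a function of $A_0 \cap B(x,5)$) and the family $\{\tilde A_2(y)\}_{y \in P_{j^\ast}}$. The disjointness $V(G(y)) \cap B(x,2) = \emptyset$ given by \Cref{c:projection} removes dependence on $A_0 \cap B(x,2)$, but the randomness in $A_0 \cap (B(x,5) \setminus B(x,2))$ influences both $\tilde T$ and the projected infection processes. A careful conditioning scheme --- freezing $A_0 \cap B(x,2)$ first and then exploiting that each $G(y)$ lies in $G \setminus B(x,2)$ --- will be needed so that the Chernoff bound may be applied to a conditionally independent family inside $P_{j^\ast}$ without degrading either the infection estimate $\Pr{y \notin \tilde A_2(y)} \leq 1/4 + o(1)$ or the lower bound on $|\tilde T|$.
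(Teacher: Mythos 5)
Your plan follows the same broad shape as the paper's -- partition (a subset of) $S(x,3)$ into well-separated classes via \Cref{l:partitiondist}, locate a class containing too many uninfected vertices, and control that event via \Cref{l:constant} applied inside the projected subgraphs $G(y)$ from \Cref{c:projection}. However, the choice of $\tilde T := N(T')\cap(S(x,2)\setminus A_3)$ creates a gap that your proposed fix does not close. The paper deliberately avoids working with $T\setminus A_3$: it instead splits $\phi(T') \ge e(T', T\cap A_0) + |T\cap(A_3\setminus A_0)|$, disposes of the first term by Hoeffding (\Cref{l:sumBin}), and then, in \Cref{c:Tsteps13}, phrases everything in terms of the set $T\setminus A_0$, which is determined entirely by $A_0\cap B(x,2)$. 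This is what makes the conditioning work: exposing $T\cap A_0$ fixes the edge structure between $T\setminus A_0$ and each $P_j$, while the events $\{y\notin A_2^y(2K)\}$ for $y\in P_j$ depend only on $A_0\cap V(G(y))\subseteq V(G)\setminus B(x,2)$ and are therefore conditionally independent. In contrast, $\tilde T$ depends on $A_0\cap B(x,5)$, which overlaps the sets $V(G(y))$; freezing $A_0\cap B(x,2)$ does not determine $\tilde T$, and the residual randomness in $A_0\cap(B(x,5)\setminus B(x,2))$ influences both $\tilde T$ and the projected processes, so no clean conditional independence is available. You would either need a union bound over the (exponentially many) possible $\tilde T$, which is too costly, or you must pass to $T\setminus A_0$ as the paper does.

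There is a second, smaller gap: the averaging conclusion ``$|P_{j^\ast}\setminus A_2|=\Omega(d)$'' is not strong enough to yield an exponentially small probability via Chernoff, because the classes $P_j$ can themselves have size $\Theta(d)$ (or larger) and the per-vertex failure probability from \Cref{l:constant} is only bounded by about $1/3$. A large deviation bound requires a supercritical \emph{ratio}, and the quantity one can actually control via the double count is the edge ratio, not the vertex fraction. This is exactly why the paper's criterion in \eqref{e:Tsteps13unlikely} is $e(T\setminus A_0, P\setminus A_2)\ge\max\{\tfrac{5}{12}e(T\setminus A_0, P),\,\alpha_3 d\}$: the factor $\tfrac{5}{12}$ exceeds the bound $\tfrac{1}{3}$ on $\Pr{y\notin A_2^y(2K)}$, and the floor $\alpha_3 d$ guarantees the exponent. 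Your averaging step needs to be reformulated in these terms, and combined with the $T\setminus A_0$ reformulation above, before the Chernoff/Hoeffding bound applies.
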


 \begin{proof}[Proof of \Cref{c:individualsum}]
Let $T'\subseteq N(x)$ with $|T'|=\frac{d}{2}$ be fixed, and let $$T := N(T') \cap S(x,2)$$ (see \Cref{f:const-exp}). By \Cref{c:regular,c:backwards}, we have $e(T', T) = \frac{d^2}{2} + O(d)$ and $|T| \geq \frac{d^2}{2K} + O(d)$.
  In particular, since $p=\frac{1}{2} - c\sigma(x)$, the expected size of $e(T', T \cap A_0)$ is $\frac{d^2}{4} - \frac{c}{2}\sqrt{d^3 \log d} + o(d^{3/2})$ and so, by Hoeffding's inequality (\Cref{l:sumBin}) and the fact that every vertex of $T$ has at most $2K$ backwards neighbours by \Cref{c:backwards}, we obtain
  \begin{equation}\label{e:sizeofTA0}
    \Pr*{ e(T' , T \cap A_0) < \frac{d^2}{4} - c\sqrt{d^3 \log d} } < \exp\left(-\frac{c^2 d \log d}{2K}\right).
  \end{equation}

  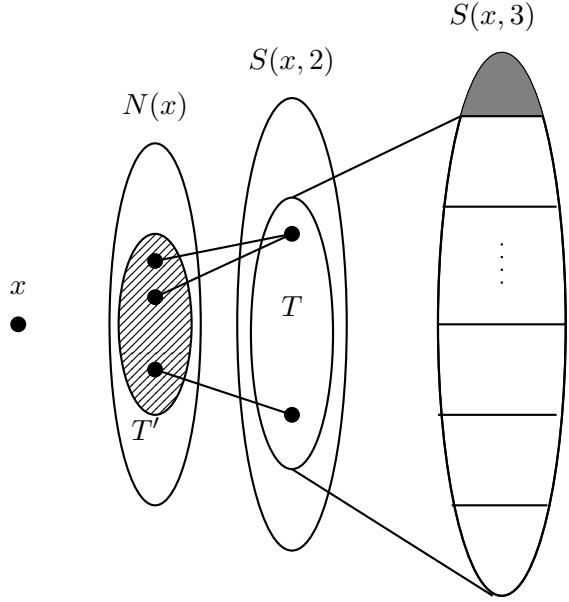
\begin{figure}
    \centering
    \begin{tikzpicture}[scale=1.2, thick, every node/.style={circle}]
      \node (V) at (0,0) [circle,draw, fill, scale=0.5]  {};
      \node[] at (0,0.4) {$x$};
      \draw[rotate=90] (0, -1.5) ellipse (2cm and 0.5cm);
      \node[] at (1.5,2.4) {$N(x)$};
      \draw[rotate=90] (0, -3) ellipse (2.5cm and 0.6cm);
      \node[] at (3,2.9) {$S(x, 2)$};
      \draw[rotate=90] (0, -5.3) ellipse (3cm and 0.7cm);
      \node[] at (5.2,3.4) {$S(x, 3)$};

      \begin{scope}
              \draw[clip, rotate=90] (0, -5.3) ellipse (3cm and 0.7cm);
              \fill[gray] (4.6,2.3) -- (4.6,4) -- (5.9,4) -- (5.9,2.3) -- cycle;
      \end{scope}
      \path[draw=black] (4.85,2.3) -- (5.75,2.3);

      \draw[rotate=90] (0, -1.5) ellipse (1cm and 0.4cm);
      \fill[pattern=north east lines, rotate=90] (0, -1.5) ellipse (1cm and 0.4cm);
      \node[] at (1.4,-1.15) {$T'$};
      \draw[rotate=90] (-0.1, -3) ellipse (1.5cm and 0.45cm);

      \node (W2) at (3,-1) [circle,draw, fill, scale=0.5] {};
      \node (W1) at (1.5,-0.5) [circle,draw, fill, scale=0.5] {};
      \path[draw=black] (3,-1) -- (1.5,-0.5);
      \node (U) at (3,1) [circle,draw, fill, scale=0.5] {};
      \node (U1) at (1.5,0.3) [circle,draw, fill, scale=0.5] {};
      \node (U2) at (1.5,0.7) [circle,draw, fill, scale=0.5] {};
      \path[draw=black] (3,1) -- (1.5,0.3);
      \path[draw=black] (3,1) -- (1.5,0.7);

      \path[draw=black] (4.6,0) -- (6,0);
      \path[draw=black] (4.6,-1) -- (5.9,-1);
      \path[draw=black] (4.75,-2) -- (5.8,-2);
      \path[draw=black] (4.65,1.3) -- (5.9,1.3);

      \draw [loosely dotted] (5.3,0.45) -- (5.3,0.95);

      \node[] at (3,0.2) {$T$};

      \path[draw=black] (3,1.4)-- (4.85,2.3);
      \path[draw=black] (3,-1.6)-- (5.2,-3);

  \end{tikzpicture}
  \caption{The set $T' \subseteq N(x) \setminus A_4$ is depicted patterned. We will estimate the density of certain types of edges between $T= N(T') \cap S(x, 2)$ and $S_0(x, 3)$, where $D\cap S(x,3)$ is depicted in grey. Applying \Cref{l:partitiondist}, we obtain a partition $S_0(x, 3)$ into sets of vertices with pairwise distance at least $6$.}
  \label{f:const-exp}
  \end{figure}

  In particular, we can bound \eqref{e:individualsum} by showing that it is (exponentially in $d$) unlikely that very few vertices in $T$ are in $A_3 \setminus A_0$ as in the following claim, which will be proved after finishing the proof of \Cref{c:individualsum}.


  \begin{claim}\label{c:Tsteps13}
    $\displaystyle\Pr*{\left|T \cap \left(A_3 \setminus A_0\right)\right| \leq  2c\sqrt{d^3 \log d}} \leq \exp(-3\alpha_2 d).$
  \end{claim}
  
  To finish the proof, note that $e(T', T \cap (A_3 \setminus A_0)) \geq |T \cap (A_3 \setminus A_0)|$ since every vertex of $T$ has a neighbour in $T'$. It then follows from \eqref{e:sizeofTA0} and \Cref{c:Tsteps13} that
  \begin{align*}
    \Pr*{\phi(T') < \frac{d^2}{4}  + c\sqrt{d^3 \log d} } &\leq \Pr*{ e(T' , T \cap A_0) + \left|T \cap \left(A_3 \setminus A_0\right)\right| < \frac{d^2}{4} + c\sqrt{d^3 \log d}}\\
    &\leq  \exp\left(-\frac{c^2 d \log d}{2K}\right) + \exp(-3\alpha_2d) \leq \exp(-2\alpha_2d),
  \end{align*}
  establishing \Cref{c:individualsum}.
\end{proof}

  \begin{proof}[Proof of \Cref{c:Tsteps13}]
    We first note that by an application of Chernoff's inequality (\Cref{l:Chernoff}),
    \begin{equation}\label{e:sizeofTA0again}
      \Pr*{\left|T \setminus A_0\right| \geq \frac{d^2}{8K}} \geq 1 - \exp\left(-\Omega(d^2)\right).
    \end{equation}
    We will assume in what follows that this holds deterministically.

    Recall that $G$ is locally $K$-almost regular by \Cref{c:regular}, so every vertex in $T \subseteq S(x,2)$ has degree $d \pm 2K$.
    Since $G$ has $K$-bounded backwards expansion by \Cref{c:backwards},  every vertex in $S(x, 2)$ has at most $2K$ neighbours in $S(x,1) \cup S(x, 2)$.
    Furthermore, by \Cref{c:localconnection}\ref{i:sparse}, we have that every vertex $y \in S_0(x,2)$ has at most $2K$ neighbours in $D$, and so it follows that every such $y$ has at least $d(y) - 4K \geq d - 6K$ neighbours in $S_0(x, 3)$.
    Moreover, $e(T \setminus S_0(x, 2), S_0(x,3)) = O(d^2)$ by Properties \ref{c:regular} and \ref{c:localconnection}\ref{i:small}. Therefore,
    \begin{equation}\label{e:edgesTtoS3}
      e\left(T \setminus A_0, S_0(x, 3)\right) = \left|T \setminus A_0\right| (d \pm 6K) + O(d^2) = \left|T \setminus A_0\right| d + O(d^2).
    \end{equation}
    Suppose now that the event in the claim statement, $\left|T \cap \left(A_3 \setminus A_0\right)\right| \leq 2c\sqrt{d^3 \log d}$, holds.
    Every vertex $y \in T \setminus A_3$ has at most $\frac{d(y)}{2} \leq \frac{d}{2} + K$ neighbours in $S_0(x, 3) \cap A_2$ and hence
    \begin{align}
      e\left(T \setminus A_0, S_0(x, 3) \cap A_2\right) & \leq \left|T \setminus A_3\right|\left(\frac{d}{2}+K\right) + \left|T \cap \left(A_3 \setminus A_0\right)\right|(d + 2K) \nonumber \\
      & \leq \left|T \setminus A_0\right|\left(\frac{d}{2}+K\right) + 3c\sqrt{d^5 \log d}. \label{e:edgesTwo0toS3capA2}
    \end{align}
    Hence, it follows from \eqref{e:edgesTtoS3} and \eqref{e:edgesTwo0toS3capA2} that
    \begin{align}\label{e:edgeTwoA0toS3woA2}
      e\left(T \setminus A_0, S_0(x, 3) \setminus A_2\right) &= e\left(T \setminus A_0, S_0(x, 3)\right) - e\left(T \setminus A_0, S_0(x, 3) \cap A_2\right) \nonumber \\& \geq  \left|T \setminus A_0\right|\cdot\frac{d}{2} - 4c\sqrt{d^5 \log d}.
    \end{align}

    Therefore, the average density from $T \setminus A_0$ to $S_0(x, 3) \setminus A_2$ is at least around half the density from $T \setminus A_0$ to $S_0(x, 3)$.
    We will restrict ourselves to a subset $B$ of $S_0(x, 3)$ with a similar property such that distinct vertices in $B$ lie at distance at least four in $G$, in order to ensure the events that the vertices of $B$ lie in $A_2$ are independent.

    Using \Cref{l:partitiondist}, we can find a partition $\mathcal{P}$ of $S_0(x, 3)$ with $|\mathcal{P}| \leq 4K^2 d^2$ (see \Cref{f:const-exp}) such that, for each $P \in \mathcal{P}$, the family of events $\left\{y \in A_2 : y \in P\right\}$ is independent.
    We claim that there must be some $P \in \mathcal{P}$ such that
    \begin{equation}\label{e:Tsteps13unlikely}
      e\left(T \setminus A_0, P \setminus A_2\right) \geq \max \left\{ \frac{5}{12} e\left(T \setminus A_0, P\right), \alpha_3 d \right\}.
    \end{equation}

    Indeed, assume every $P \in \mathcal{P}$ violates \eqref{e:Tsteps13unlikely}. Then let $\mathcal{P}_1$ be the family of sets $P \in \mathcal{P}$ such that $e\left(T \setminus A_0, P \setminus A_2\right) < \alpha_3 d$. Similarly, let $\mathcal{P}_2$ be the family of sets $P \in \mathcal{P}$ such that $e\left(T \setminus A_0, P \setminus A_2\right) < \frac{5}{12} e\left(T \setminus A_0, P\right)$. Then we have
    \begin{align}
      e\left(T \setminus A_0, S_0(x, 3) \setminus A_2\right) &\leq  \sum_{P \in\mathcal{P}_1} e\left(T \setminus A_0, P\setminus A_2\right) + \sum_{P \in\mathcal{P}_2}e\left(T \setminus A_0, P\setminus A_2\right) \nonumber\\
     & \leq  |\mathcal{P}|\alpha_3 d+ \sum_{P \in\mathcal{P}} \frac{5}{12} e\left(T \setminus A_0, P\right)                                                            \nonumber\\
     & \leq  4 \alpha_3 K^2 d^3 + \frac{5}{12} e\left(T \setminus A_0, S_0(x, 3)\right)\nonumber\\
     & \leq \left|T \setminus A_0\right| d \cdot \left( \frac{5}{12} + 32 K^3 \alpha_3 + O(1/d) \right),\label{e:contradictEdgeTwoA0toS3woA2}
    \end{align}
    where the last line follows from \eqref{e:sizeofTA0again} and \eqref{e:edgesTtoS3}. If $\alpha_3$ is sufficiently small, however, this contradicts \eqref{e:edgeTwoA0toS3woA2}. Since we assumed $|T \cap \left(A_3 \setminus A_0\right)| \leq 2c\sqrt{d^3 \log d}$, combining~\eqref{e:contradictEdgeTwoA0toS3woA2} with~\eqref{e:sizeofTA0again} proves that
    \begin{equation}\label{e:1st:reminder}
      \Pr*{\left|T \cap \left(A_3 \setminus A_0\right)\right| \leq 2c\sqrt{d^3 \log d}} \leq \exp\left(-\Omega(d^2)\right) + \sum_{P \in \mathcal{P}} \Pr*{P\text{ satisfies }\eqref{e:Tsteps13unlikely}}.
    \end{equation}
    We now show that the right-hand side of~\eqref{e:1st:reminder} is small. Let $P$ be such that \eqref{e:Tsteps13unlikely} holds.
    For each $y \in P$, consider the event $\{y \in A_2\}$, which we denote by $\mathcal{E}_y$.
    By \Cref{l:constant}, for each $y \in P$ we have $\Pr*{\mathcal{E}_y} \geq \frac{3}{4} + o(1) \geq \frac{2}{3}$, and since all vertices in $P$ are at pairwise distance at least four, the events $\left\{\mathcal{E}_y \colon y \in P \right\}$ are mutually independent.

    However, the event $\mathcal{E}_y$ is \emph{not} independent of $B(x,2) \cap A_0$, and so in particular not independent of the distribution of $T \cap A_0$.
    To get around this issue, we will use the $K$-projection property (\Cref{c:projection}) for $\ell=3$, which implies that for every $y \in P$ there is a subgraph $G(y)\subseteq G$ which is also in $\mathcal{H}(K)$, contains $y$, is disjoint from $B(x, 2)$ and satisfies $d_{G(y)}(w) \geq d_G(w)-3K$ for any $w \in V(G(y))$ (see \Cref{f:projection}).

    \begin{figure}
      \centering
      \begin{tikzpicture}[scale=1, thick, every node/.style={circle}]
        \node (V) at (0,0) [circle,draw, fill, scale=0.6]  {};
        \begin{scope}
            \draw[clip, rotate=90] (0, -3) ellipse (2.5cm and 0.6cm);
            \fill[gray] (2.6,2) -- (2.6,4) -- (3.4,4) -- (3.4,2) -- cycle;
        \end{scope}
        \begin{scope}
            \draw[clip, rotate=90] (0, -4.7) ellipse (3cm and 0.7cm);
            \fill[gray] (4.25,2.3) -- (4.25,5) -- (5.3,5) -- (5.3,2.3) -- cycle;
        \end{scope}
        \begin{scope}
            \draw[clip, rotate=90] (0, -1.5) ellipse (2cm and 0.5cm);
            \fill[gray] (1,1.6) -- (2,1.6) -- (2,2) -- (1,2) -- cycle;
        \end{scope}

        \node[] at (0,0.4) {$x$};
        \draw[rotate=90] (0, -1.5) ellipse (2cm and 0.5cm);
        \node[] at (1.5,2.4) {$N(x)$};
        \draw[rotate=90] (0, -3) ellipse (2.5cm and 0.6cm);
        \node[] at (3,2.9) {$S(x, 2)$};
        \draw[rotate=90] (0, -4.7) ellipse (3cm and 0.7cm);
        \node[] at (4.7,3.4) {$S(x, 3)$};

        \node (Y1) at (4.8,0.4) [circle,draw, fill, scale=0.6] {};
        \node[] at (4.8,0.8) {$y_1$};
        \node (Y2) at (4.8,-1) [circle,draw, fill, scale=0.6] {};
        \node[] at (4.8,-1.4) {$y_2$};

        \path[draw=black] (4.8,0.4) -- (8.8,1.4) -- (12.8,0.4) -- (8.8,-0.6) -- cycle;
        \path[draw=black] (4.8,-1) -- (8.8,0) -- (12.8,-1) -- (8.8,-2) -- cycle;
        \node[] at (8.8,0.4) {$G(y_1)$};
        \node[] at (8.8,-1) {$G(y_2)$};

    \end{tikzpicture}
    \caption{For two vertices $y_1, y_2 \in S_0(x, 3)$, by applying \Cref{c:projection} there exist subgraphs $G(y_1)$ and $G(y_2)$ that are disjoint from $B(x, 2)$ and lie in $\mathcal{H}(K)$.}
    \label{f:projection}
    \end{figure}
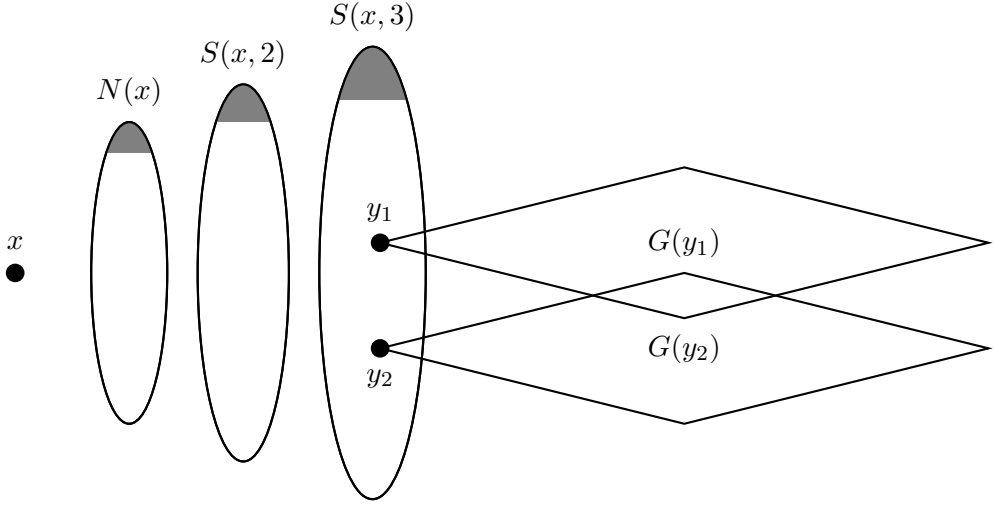

    In particular, recall $\sigma(x) = \sqrt{\frac{\log d(x)}{d(x)}}$ and observe that
    \[
      \sigma(x) = (1+o(1)) \sqrt{\frac{\log d_{G(y)}(y)}{d_{G(y)}(y)}}.
    \]

    Let us consider the bootstrap percolation process restricted on $G(y)$ with the set of initially infected vertices
    $A^y_0(m) := A_0 \cap V(G(y))$ and infection threshold of $r(w)= \frac{d(w)}{2} + m$ for each $w \in G(y)$. For each $i\in \mathbb N$ we define 
    \[A^y_{i} (m):= A^y_{i-1}(m)\cup \left\{ w \in V(G(y)) \;:\; \left|N_{G(y)}(w) \cap A^y_{i-1}(m)\right| \geq \frac{d_{G(y)}(w)}{2} + m\right\}.\]

Applying \Cref{l:constant} to $G(y)$ with $m=2K$, we deduce that
    \begin{equation}
      \Pr*{ y\in A^y_2(2K) } \geq \frac{3}{4} + o(1) \geq \frac{2}{3}.
    \end{equation}
    However, since $\frac{d_{G(y)}(w)}{2} \geq \frac{d_G(w) - 3K}{2} \geq \frac{d_G(w)}{2} - 2K$ for each $w \in V(G(y))$, it follows that $$A^y_2(2K) \subseteq A_2$$ and hence we can deduce that $\Pr*{y \in A_2} \geq \frac{2}{3}$ for each $y \in P$.

    At this point, we expose the initially-infected vertices in $T$, i.e., the set $T \cap A_0$. By \Cref{c:backwards}, we have that each vertex of $P \subseteq S_0(x,3)$ has at most $3K$ neighbours in $T \subseteq S(x,2)$.
    For each $i \in [3K]$, let $b_i$ be the number of elements of $P$ with $i$ neighbours in $T \setminus A_0$.
    It follows that $e\left(T \setminus A_0,P\right) = \sum_{i=1}^{3K} i b_i$ and $e\left(T \setminus A_0, P \setminus A_2\right)$ is stochastically dominated by $Y=\sum_{i=1}^{3K} i B_i$, where $B_i \sim \Bin\left(b_i, \frac{1}{3}\right)$.
    Letting \[ \tau \coloneqq \max \left\{ \frac{5}{12}\sum_{i=1}^{3K} i b_i, \alpha_3 d \right\} - \frac{1}{3}\sum_{i=1}^{3K} i b_i \geq \max\left\{\frac{1}{12}\left(\sum_{i=1}^{3K} i b_i\right), \frac{1}{5}a_3d\right\},\] we have by Hoeffding's inequality (\Cref{l:sumBin}) that
    \begin{align*}
      \Pr*{ P \text{ satisfies } \eqref{e:Tsteps13unlikely} } \leq \Pr*{Y \geq \frac{1}{3} \sum_{i=1}^{3K} i b_i + \tau}
      \leq  \exp\left(-\frac{2\tau^2}{3K \cdot \Ex{Y}}\right).
    \end{align*}
    Using that
    \begin{align*}
     3\cdot\Ex{Y} = \sum_{i=1}^{3K} i b_i \leq 12\tau \qquad \text{ and } \qquad \frac{\tau}{6K} \geq \frac{\alpha_3 d}{30K} \geq 4 \alpha_2 d,
    \end{align*}
    we then have that $\Pr{P \text{ satisfies \eqref{e:Tsteps13unlikely}}} \leq \exp(-4 \alpha_2 d)$. Since $|\mathcal{P}| =O(d^2)$, we obtain by \eqref{e:1st:reminder} and the union bound that
    \begin{align*}
      \Pr*{ \left|T \cap \left(A_3 \setminus A_0\right)\right| \leq  2c\sqrt{d^3 \log d}} \leq \exp\left(-3 \alpha_2 d\right),
    \end{align*}
    proving \Cref{c:Tsteps13}.
  \end{proof}

  We are now ready to finish the proof of \Cref{l:dimred}.
 
\begin{proof}[Proof of \Cref{l:dimred}]
  Note that by \eqref{e:conditionR} and \eqref{e:boundmsum}, we have 
  \[
    \Pr*{ x \not\in A_5 } \leq \exp\left(-\frac{\alpha_1d}{2}\right) + \exp(-\alpha_2d) \leq \exp\left(-\frac{\alpha_1 d}{4}\right),
  \]
  and so \Cref{l:dimred} holds with $\beta =  \alpha_1/4$.
\end{proof}

\subsection{Proof of Lemma \ref{l:super-exponential}}

In this section we will prove \Cref{l:super-exponential} about the existence of a constant  a $\beta >0$ (independent of $x$) such that
  \[
    \Pr[\big]{ x \notin A_{11} } \leq \exp\bigl( - \beta d(x)^2\bigr).
  \]
 Throughout this section we  let $d=d(x)$.
 
   \begin{proof}[Proof of \Cref{l:super-exponential}] 
   Suppose that $x \notin A_{11}$, and let $T_\ell \coloneqq S(x, \ell) \setminus A_{11-\ell}$.   
   By definition, no vertex of $T_\ell$ is infected by time $11 - \ell$. Therefore, each $y \in T_\ell$ has at most $d(y)/2$ neighbours in $A_{11-(\ell+1)}$.
  Hence, by \Cref{c:regular,c:backwards}, each vertex of $T_\ell$ has at least $(d - 3K\ell)/2$ neighbours in $T_{\ell+1}$.
  Moreover, since $G$ has $K$-bounded backwards expansion (\Cref{c:backwards}), every vertex in $S(x,\ell + 1)$ has at most $K(\ell+1)$ neighbours in $S(x,\ell)$.
  Therefore,\[ |T_{\ell+1}| \geq \frac{|T_\ell| \cdot (d - 3K\ell)}{2K(\ell+1)} = \Omega\big( |T_\ell| \cdot d\big). \]
  Since we are assuming $x \notin A_{11}$, we have $|T_0| = 1$, and therefore by induction there exists a constant $\alpha > 0$ such that $|T_\ell| \geq \alpha d^\ell$ for every $0 \leq \ell \leq 6$.
  Hence, if $x \notin A_{11}$, we may take a subset $T' \subseteq T_6 = S(x,6) \setminus A_5$ of size $\alpha d^6$.
  
  By \Cref{l:partitiondist} there is a partition $S(x, 6) = P_1 \cup \dots \cup P_m$ where $m=O\left(d^5\right)$ such that $\dist(y_1,y_2) \geq 12$ for each $j \in [m]$ and distinct $ y_1,y_2 \in P_j$.
  We claim that for $\varepsilon \ll \alpha$ sufficiently small there is some $j\in [m]$ such that
  \begin{equation}\label{e:Bjprop}
    |P_j| \geq \varepsilon d \qquad \text{and} \qquad |P_j \cap T'| \geq \varepsilon |P_j|.
  \end{equation}
  Indeed, $|S(x,6)| = O(d^6)$ by \Cref{c:regular}. Hence, if \eqref{e:Bjprop} fails to hold for every $j \in [m]$, then
  \[
    |T'| \leq \varepsilon d m + \varepsilon |S(x,6)| < \alpha d^6,
  \]
  a contradiction.

  Let $P$ be such that \eqref{e:Bjprop} holds.
  For each $y \in P$, consider the event $\{y \notin A_5\}$, which we denote by $\mathcal{E}_y$.
  The events $\left\{ \mathcal{E}_y \colon y \in P\right\}$ are independent, and by \Cref{l:dimred} there is some $\beta'>0$ (independent of $y$) such that $\Pr*{y \not\in A_5} \leq \exp(-\beta' d(y))$ for every $y \in P$.
  Since $d(y) \geq d/2$ for every $y \in P$, we have that $|P \setminus A_5|$ is stochastically dominated by a $\Bin(|P|,p')$ random variable with $p' \coloneqq \exp(-\beta' d/2)$. By \eqref{e:Bjprop} and Chernoff's inequality (\Cref{l:Chernoff}\ref{i:bigtail}), there is some constant $\beta''(\beta',\varepsilon)>0$ such that
  \begin{equation}\label{e:sizeofBi}
    \Pr[\big]{|P \cap T'| \leq \varepsilon |P|} \leq \Pr[\big]{ |P \setminus A_5| \geq \varepsilon |P|} \leq \left(\frac{e p'}{\varepsilon}\right)^ {\varepsilon |P|} \leq \exp\left(-\beta'' d^2\right).
  \end{equation}
  By a union bound over the partition classes $P_j$ ($j \in [m]$), we obtain
  \begin{align*}
    \Pr*{ x \not\in A_{11} } \leq \Pr[\Big]{\text{some } P_j \text{ satisfies } \eqref{e:Bjprop}} \leq m\exp\left(-\beta'' d^2\right) \leq \exp\left(-\frac{\beta'' d^2}{2}\right),
  \end{align*}
  and so the statement holds with $\beta = \beta''/2$.
\end{proof}

\section{Dominating process: stabilisation after two rounds}\label{sec:dominating}

In this section we prove the auxiliary lemmas (\Cref{l:inA3-A2}--\Cref{l:inA2-A1}) needed for the proof of the $0$-statement of \Cref{t:mainThm} in \Cref{s:0-statement}. 
Throughout this section we let $x\in V(G)$, $c > \frac{1}{2}$, $\vartheta(d) = \sqrt{\log d}$,
\[ \sigma(x) = \sqrt{\frac{\log d(x)}{d(x)}},\qquad \gamma(x) = \sqrt{\frac{d(x)}{\vartheta(d(x))}}, \] 
and $p = \frac{1}{2} - c\sigma(x)$. To ease notation let $d=d(x)$, $\sigma=\sigma(x)$ and $\gamma=\gamma(x)$. Assume there is a $K \in \mathbb{N}$ such that $G \in \mathcal{H}(K)$. 

Observe that if $w \in V(G)$ is such that $\dist(x, w)$ is constant, then by \Cref{c:regular} and the asymptotic estimates $\log (d + O(1))/\log d = 1 + O(1/d)$ and $\sqrt{1 + O(1/d)} = 1 + O(1/d)$,
\begin{equation} 
\label{e:gammaconstant}
\gamma(w) = \gamma \cdot \left(1 + O\left(\frac{1}{d(x)}\right)\right) = \gamma + o(1).
\end{equation}

\subsection{Proof of Lemma \ref{l:inA3-A2}}

In this section we will prove \Cref{l:inA3-A2} about the existence of a constant $\beta >0$ (independent of $x$) such that 
  \[ \Pr{x \in \hat{A}_3 \setminus \hat{A}_2} \leq \exp\left(-\beta \gamma(x)^2 \log d(x)\right).\]

  Let $\mathcal{W}$ be the set of pairs $(W_1, W_2)$ with $W_i \subseteq S_0(x, i)$ satisfying
  \[ W_2 \subseteq N(W_1),\qquad |W_1| = \gamma-3K,\qquad |W_2| = (\gamma-3K)^2/2K, \]
  which will be called \emph{witnesses}. The \emph{weight} of a witness $(W_1, W_2)$ is defined as $\zeta(W_2) \coloneqq e(W_2, S_0(x, 3))$. We observe that,
  for every $(W_1, W_2) \in \mathcal{W}$, we have
  \begin{equation}\label{e:witnessobs}
  \zeta(W_2) = |W_2|(d+O(1)),
  \end{equation}
  since every vertex of $W_2$ has degree $d\pm 2K$ by \Cref{c:regular} and every vertex of $S_0(x,2)$ has at most $4K$ neighbours outside $S_0(x,3)$ by Properties \ref{c:backwards} and \ref{c:localconnection}\ref{i:sparse}.

  The definition of witness is motivated by the following claim (see \Cref{f:witness}).

 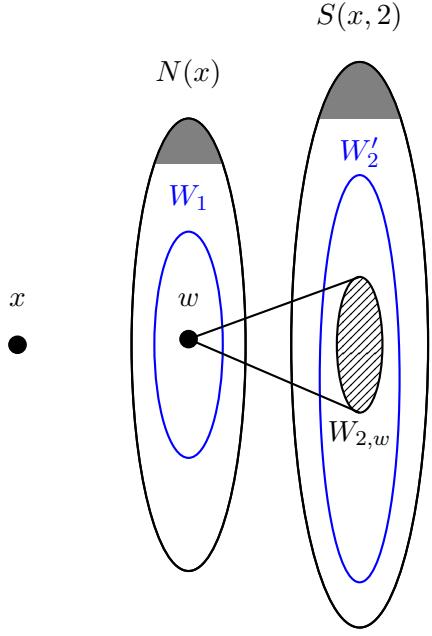
\begin{figure}
 \centering
  \begin{tikzpicture}[scale=1.5, thick, every node/.style={circle}]
        \node (V) at (0,0) [circle,draw, fill, scale=0.6]  {};
        \begin{scope}
            \draw[clip, rotate=90] (0, -3) ellipse (2.5cm and 0.6cm);
            \fill[gray] (2.6,2) -- (2.6,4) -- (3.4,4) -- (3.4,2) -- cycle;
        \end{scope}
        \begin{scope}
            \draw[clip, rotate=90] (0, -1.5) ellipse (2cm and 0.5cm);
            \fill[gray] (1,1.6) -- (2,1.6) -- (2,2) -- (1,2) -- cycle;
        \end{scope}
        
        \node[] at (0,0.4) {$x$};
        \draw[rotate=90] (0, -1.5) ellipse (2cm and 0.5cm);
        \node[] at (1.5,2.4) {$N(x)$};
        \draw[rotate=90] (0, -3) ellipse (2.5cm and 0.6cm);
        \node[] at (3,2.9) {$S(x, 2)$};

        \draw[rotate=90,draw=blue] (0, -1.5) ellipse (1cm and 0.3cm);
        \node[] at (1.5,1.3) {\textcolor{blue}{$W_1$}};
        \node (x) at (1.5,0.05) [circle,draw, fill, scale=0.6]  {};
        \node[] at (1.5,0.4) {$w$};
        \draw[rotate=90, draw=blue] (-0.3, -3) ellipse (1.8cm and 0.35cm);
        \node[] at (3,1.7) {\textcolor{blue}{$W_2'$}};
        \draw[rotate=90,pattern=north east lines] (0, -3) ellipse (0.6cm and 0.2cm);
        \node[] at (3,-0.8) {$W_{2,w}$};

        \path[draw=black] (1.5,0.05) -- (3,0.6);
        \path[draw=black] (1.5,0.05) -- (3,-0.6);
    \end{tikzpicture}
    \caption{For a vertex $x \in (\hat{A}_3 \setminus \hat{A}_2)$ there is a set $W_1 \subseteq S_0(x,1) \cap (\hat{A}_2 \setminus \hat{A}_1)$ of size $\gamma-3K$. For each $w \in W_1$ there is a set $W_{2, w} \subseteq S_0(x, 2) \cap \hat{A}_1 \setminus \hat{A}_0$, and their union is $W_2'$, which contains a subset $W_2$ of size exactly $(\gamma-3K)^2/2K$.}
    \label{f:witness}
    \end{figure}
  
  \begin{claim}\label{c:0-statement-witness}
    If $x \in \hat{A}_3 \setminus \hat{A}_2$, there exists a witness $(W_1, W_2) \in \mathcal{W}$ such that \[
    Z \geq \Ex{Z} + (c \sigma d - 3\gamma)|W_2|,
    \]
    where $Z = Z(W_2) := e\bigl(W_2, S_0(x, 3) \cap \hat{A}_0 \bigr)$.
  \end{claim}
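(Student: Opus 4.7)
The plan is to unpack the hypothesis $x \in \hat A_3 \setminus \hat A_2$ layer by layer, producing the required witness $(W_1, W_2)$ by going one sphere at a time and pruning to typical vertices using \Cref{c:backwards,c:localconnection}. Once we have $W_2 \subseteq S_0(x,2) \cap (\hat A_1 \setminus \hat A_0)$ of the prescribed size, the crucial inequality will follow deterministically: each $v \in W_2$ is infected in round~1, forcing many of its neighbours (and hence many of its $S_0(x,3)$-neighbours) to be initially infected, which is a strong lower bound on $Z(W_2)$ compared to the a priori estimate $\Ex{Z(W_2)} = \zeta(W_2) \cdot p$.

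For the construction of $W_1$: the infection condition in rounds~2 and~3 gives $|N(x)\cap\hat A_2| \ge d(x)/2$ and $|N(x)\cap\hat A_1| < d(x)/2 - \gamma$, so at least $\gamma$ neighbours of $x$ lie in $\hat A_2 \setminus \hat A_1$. Removing the at most $K$ non-typical neighbours in $D$ (via \Cref{c:localconnection}\ref{i:sparse}), this yields at least $\gamma - K$ candidates in $S_0(x,1)$, and we pick any subset $W_1$ of size $\gamma - 3K$ to leave room. For each $w \in W_1$, the infection rules give $|N(w)\cap\hat A_1| \ge d(w)/2 - \gamma(w)$ while $w \notin \hat A_1$ gives $|N(w)\cap\hat A_0| < d(w)/2 - 2\gamma(w)$, so $N(w) \cap (\hat A_1 \setminus \hat A_0)$ has size at least $\gamma(w)$. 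Restricting to $S_0(x,2)$ costs at most $2K$ vertices per $w$ (at most $K$ neighbours inside $B(x,1)$ by \Cref{c:backwards} and at most $K$ inside $D$ by \Cref{c:localconnection}\ref{i:sparse}), giving at least $\gamma - 3K$ such contributions per $w$ after using $\gamma(w) = \gamma + o(1)$ from \eqref{e:gammaconstant}. Finally, each vertex of $S_0(x,2)$ has at most $2K$ neighbours in $S(x,1)$ by \Cref{c:backwards}, so summing over $w \in W_1$ and dividing by this multiplicity yields
\[
\Big|\bigcup_{w\in W_1}\big(N(w) \cap (\hat A_1\setminus\hat A_0) \cap S_0(x,2)\big)\Big| \;\ge\; \frac{|W_1|(\gamma-3K)}{2K} \;=\; \frac{(\gamma-3K)^2}{2K},
\]
from which we extract $W_2$ of exactly the required size; by construction $W_2 \subseteq (\hat A_1 \setminus \hat A_0) \cap N(W_1)$.

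It remains to convert the membership $W_2 \subseteq \hat A_1$ into the edge inequality. For each $v \in W_2$ we have $|N(v)\cap\hat A_0| \ge d(v)/2 - 2\gamma(v)$. By \Cref{c:backwards} $v$ has at most $2K$ neighbours in $B(x,2)$, and by \Cref{c:localconnection}\ref{i:sparse} at most $2K$ neighbours in $D$, so at most $4K$ of the initially infected neighbours lie outside $S_0(x,3)$. Summing,
\[
Z(W_2) \;\ge\; \sum_{v\in W_2}\Big(\tfrac{d(v)}{2} - 2\gamma(v) - 4K\Big) \;\ge\; |W_2|\Big(\tfrac{d}{2} - 2\gamma - O(1)\Big).
\]
On the other hand, $W_2$ being a fixed (deterministic in the combinatorial sense, but we are performing the bookkeeping over the event) subset of $S_0(x,2)$, we have $\Ex{Z(W_2)} = \zeta(W_2)\cdot p = |W_2|(d+O(1))(\tfrac12 - c\sigma) = |W_2|\bigl(\tfrac{d}{2} - c\sigma d + O(1)\bigr)$ by \eqref{e:witnessobs}. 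Subtracting gives $Z - \Ex{Z} \ge |W_2|(c\sigma d - 2\gamma - O(1)) \ge |W_2|(c\sigma d - 3\gamma)$ for $d$ large, as required (this uses $c\sigma d = c\sqrt{d\log d} \gg \gamma = \sqrt{d}/(\log d)^{1/4}$ so the inequality is far from tight in the leading order).

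The only genuinely delicate point is the double-counting step that produces $W_2$ of the exact prescribed cardinality: one must simultaneously control the losses incurred when passing from $S(x,\ell)$ to $S_0(x,\ell)$, from $N(w)$ to $N(w)\cap S(x,\ell+1)$, and from the union over $w \in W_1$ down to the quotient by the backwards multiplicity. All of these losses are $O(K)$ per vertex, and the $-3K$ slack in the definition of witness has been chosen precisely to absorb them; aside from this careful bookkeeping, the argument is entirely deterministic once the infection rules are unpacked.
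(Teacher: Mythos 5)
Your proposal is correct and follows essentially the same route as the paper's proof: unpacking the $\Boot_2(\gamma)$ thresholds round by round to build $W_1 \subseteq S_0(x,1)\cap(\hat A_2\setminus\hat A_1)$ and $W_2\subseteq N(W_1)\cap S_0(x,2)\cap(\hat A_1\setminus\hat A_0)$ via the same double count against the backwards-expansion bound, and then deriving the edge inequality deterministically from $W_2\subseteq\hat A_1$ together with $\Ex{Z}=\zeta p$ and \eqref{e:witnessobs}. The only cosmetic discrepancy is that the bound $|D\cap N(x)|\le 1$ comes from \Cref{c:localconnection}\ref{i:small} with $\ell=1$ rather than \ref{i:sparse}, which does not affect the argument.
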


  \begin{proof}[Proof of \Cref{c:0-statement-witness}]
    We start by observing that, by definition of the $\Boot_2(\gamma)$ process, entering $\hat{A}_3$ requires $d/2$ infected neighbours, while entering $\hat{A}_2$ requires only $d/2 - \gamma$ ones. Therefore, the event $\{x \in \hat{A}_3 \setminus \hat{A}_2\}$ implies the existence of a set $W_1' \subseteq S(x,1) \cap (\hat{A}_2 \setminus \hat{A}_1)$ of recently-infected neighbours of $x$ of size $|W_1'|=\gamma$.
    Since $|D \cap N(x)| \leq 1 \leq 3K$ by \Cref{c:localconnection}\ref{i:small}, we may take a subset $W_1 \subseteq W_1' \cap S_0(x,1)$ of size $\gamma - 3K$.
    
    Similarly, since each $w \in W_1$ is in $\hat{A}_2 \setminus \hat{A}_1$, it has $\gamma(w) \geq \gamma - 1$ neighbours in $\hat{A}_1 \setminus \hat{A}_0$, where the inequality uses~\eqref{e:gammaconstant}.
    Moreover, since each $w \in S_0(x, 1)$ has at most $K$ neighbours in $B(x, 1)$ by \Cref{c:backwards} and at most $K$ neighbours in $D$ by \Cref{c:localconnection}\ref{i:sparse}, for each $w \in W_1$ the set $W_{2,w} \coloneqq N(w) \cap S_0(x,2) \cap (\hat{A}_1 \setminus \hat{A}_0)$ has size at least $\gamma-3K$ (see \Cref{f:witness}).
    Moreover, every element of \[ W_2' := \bigcup_{w\in W_1} W_{2,w} \] has at most $2K$ neighbours in $W_1$ by \Cref{c:backwards}, and so $|W_2'| \geq \frac{(\gamma-3K)^2}{2K}$ by double counting. We may therefore choose a $W_2 \subseteq W_2'$ of size $\frac{(\gamma-3K)^2}{2K}$.
    
    It remains to show that $Z(W_2) \geq \Ex{Z(W_2)} + (\sigma d - 3\gamma)|W_2|$. By definition of $\zeta$ and $Z$, we have $\mathbb{E}[Z] = \zeta p$.
    On the other hand, $W_2 \subseteq \hat{A}_1 \setminus \hat{A}_0$ by construction. Therefore, by Properties \ref{c:regular}, \ref{c:backwards} and \ref{c:localconnection}\ref{i:sparse}, every $w \in W_2 \subseteq S_0(x,2)$ has at least $\frac{d(w)}{2} - 2\gamma(w) - 4K = \frac{d}{2} - 2 \gamma + O(1)$ neighbours in $S_0(x,3) \cap \hat{A}_0$.
    Since $p = 1/2 - c\sigma$, using \eqref{e:witnessobs} we obtain
    \[ Z(W_2) \geq |W_2|(d/2 - 2\gamma + O(1)) = \zeta/2 - |W_2|(2\gamma + O(1)) \geq  \zeta p + (c \sigma d - 3\gamma)|W_2|, \]
    where we used that $ \zeta  p = (\zeta/2) - c \sigma |W_2|(d+O(1))$ and $\gamma \gg 1 \gg c\sigma$ in the last inequality.
  \end{proof}

Therefore, if the process has not stabilised after three rounds, there is a witness such that $Z$ exceeds its expectation.
On the other hand, the next claim shows that this is a low probability event. To simplify notation, we set $s \coloneqq (\gamma-3K)^2/2K$ in the remainder of this section. Recall that every witness $(W_1, W_2)$ has $|W_2| = s$.

    \begin{claim}\label{c:0-statement-prob}
    Let $s \in \mathbb{N}$.
    For every witness $(W_1, W_2)$, we have
    \[ \Pr*{Z(W_2) \geq \Ex{Z} + (c \sigma d - 3\gamma)s} \leq \exp\left(-(2+o(1)) c^2 s \log d  \right), \]
      where $Z = Z(W_2) := e\bigl(W_2, S_0(x, 3) \cap \hat{A}_0 \bigr)$.  
  \end{claim}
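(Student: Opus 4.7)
The plan is to express $Z$ as a weighted sum of independent Bernoulli variables and apply Hoeffding's inequality (\Cref{l:sumBin}), with the main work being a careful estimate of the weighted second moment $D(k)$ that extracts the right constant $2c^2$ in the exponent.

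For each $y \in S_0(x,3) \cap N(W_2)$, let $k_y := |N(y) \cap W_2|$. Since $\hat A_0 = A_0 \sim {\bf A}_p$, the indicators $\mathbb{1}[y \in A_0]$ are independent $\textup{Ber}(p)$ variables, and
\[
Z = \sum_{y \in S_0(x,3)\cap N(W_2)} k_y\, \mathbb{1}[y \in A_0] = \sum_{i=1}^{3K} i\, B_i,
\]
where $B_i \sim \Bin(b_i,p)$ and $b_i := |\{y : k_y = i\}|$. The upper bound $k_y \le 3K$ follows from \Cref{c:backwards}, since every $y \in S(x,3)$ has at most $3K$ neighbours in $S(x,2)$. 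Thus $Z$ fits the framework of \Cref{l:sumBin} with $\Ex{Z} = \zeta p$ and $D(k) = \sum_y k_y^2$.

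The crucial step is bounding $D(k)$ tightly enough to obtain the constant $2c^2$ rather than $2c^2/(3K)$ that the trivial bound $D(k) \le 3K\,\zeta$ would give. I would use \Cref{c:localconnection}\ref{i:cherry}: any two vertices of $W_2 \subseteq S_0(x,2)$ share at most one common neighbour in $S_0(x,3)$. Counting ordered triples $(w_1, w_2, y)$ with $w_1\neq w_2 \in W_2$ and $y$ adjacent to both, one obtains $\sum_y \binom{k_y}{2} \le \binom{s}{2}$, hence
\[
D(k) = \sum_y k_y^2 \;=\; \sum_y k_y + 2\sum_y \binom{k_y}{2} \;\le\; \zeta + s(s-1).
\]
Since $\gamma = \sqrt{d/\sqrt{\log d}}$, we have $s = (\gamma-3K)^2/(2K) = (1+o(1))d/(2K\sqrt{\log d})$, so $s^2/(sd) = s/d = o(1)$, and together with $\zeta = |W_2|(d+O(1)) = sd(1+o(1))$ from \eqref{e:witnessobs}, this yields $D(k) = sd(1+o(1))$.

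For the deviation $\tau = (c\sigma d - 3\gamma)s$, note that $c\sigma d = c\sqrt{d\log d}$ dominates $3\gamma = 3\sqrt{d/\sqrt{\log d}}$ by a factor $(\log d)^{3/4}$, so $\tau = (c+o(1))s\sqrt{d\log d}$. Plugging into \Cref{l:sumBin},
\[
\frac{2\tau^2}{D(k)} \;=\; \frac{2c^2 s^2\, d \log d\,(1+o(1))}{sd\,(1+o(1))} \;=\; (2+o(1))\,c^2 s \log d,
\]
which gives the claimed bound. The main obstacle is purely the bookkeeping in the $D(k)$ estimate: without invoking the ``cherry'' property \Cref{c:localconnection}\ref{i:cherry}, the constant in the exponent would be too small by a $K$-dependent factor, which would be insufficient for the union bound used to prove \Cref{l:inA3-A2}.
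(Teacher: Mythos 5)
Your proposal is correct and follows essentially the same route as the paper: decompose $Z$ as a weighted sum of independent Bernoulli indicators with weights $k_y \le 3K$ (via \Cref{c:backwards}), apply Hoeffding's inequality (\Cref{l:sumBin}), and use the cherry property \Cref{c:localconnection}\ref{i:cherry} to show the weighted second moment is $(1+o(1))sd$ rather than the lossy $3K\zeta$. The only cosmetic difference is that you bound $\sum_y \binom{k_y}{2} \le \binom{s}{2}$ directly, whereas the paper bounds the number of vertices with at least two neighbours in $W_2$ by $\binom{|W_2|}{2}$; these are the same estimate.
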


  \begin{proof}[Proof of \Cref{c:0-statement-prob}]
    Recall that by \Cref{c:backwards}, a vertex in $S_0(x, 3)$ has at most $3K$ neighbours in $S(x, 2)$.
    For $i \in [3K]$, let $X_i$ be the set of elements of $S_0(x,3)$ having $i$ neighbours in $W_2$.
    We have
    \[
      \zeta = \zeta(W_2) = \sum_{i=1}^{3K} i \cdot |X_i|,
    \]
    and letting $M \coloneqq \sum_{i=1}^{3K} i^2 |X_i|$ we have by~\Cref{l:sumBin} that, for any $t \geq 0$,
    \begin{equation}\label{e:probwitness}
      \Pr*{Z(W_2) \geq \Ex{Z} + t} \leq \exp\left(-\frac{2t^2}{M}\right).
    \end{equation}

    Let $X_{\geq 2}$ be the set of elements of $S_0(x, 3)$ having at least two neighbours in $W_2$.
    Observe that $|X_{\geq 2}| \leq \binom{|W_2|}{2} \leq \frac{\gamma^4}{2}=o(sd)$, since any two elements of $S_0(x, 2)$ have at most one neighbour in $S_0(x, 3)$ by \Cref{c:localconnection}\ref{i:cherry}.
    Therefore, recalling~\eqref{e:witnessobs}, we have 
    \[ 
    M = \sum_{i=1}^{3K} i^2 |X_i| \leq |X_1| + o(sd) \leq (1+o(1)) \zeta = (1+o(1))sd 
    \] 
    and with $t =  (c \sigma d - 3\gamma)s$ the right hand-side of \eqref{e:probwitness} is
    \[
        \exp\left(- \frac{2s^2(c \sigma d - 3 \gamma)^2}{(1+o(1))sd}\right) \leq \exp\left( - (2+o(1)) s c^2 \sigma^2 d\right) = \exp(-(2+o(1)) c^2 s \log d),
    \]
    since $\sigma d \gg \gamma$. This proves the claim.
  \end{proof}

  We are now ready to prove \Cref{l:inA3-A2}.
  
\begin{proof}[Proof of \Cref{l:inA3-A2}]
  Suppose there is an $x \in \hat{A}_3 \setminus \hat{A}_2$ and consider the witness $(W_1, W_2)$ from \Cref{c:0-statement-witness}. Recall that every witness satisfies $|W_2| = s = (\gamma-3K)^2/2K$.
  Since $W_2 \subseteq N(W_1)$ and $\gamma d/s= \Theta(d/\gamma)= \Theta(\sqrt{d\vartheta})$, for each $W_1 \subseteq N(x)$ there are at most
  \[
   \binom{|N(W_1)|}{s} \leq \binom{\gamma d}{s} \leq \exp\Bigl((1/2+o(1)) s \log d \Bigr)
  \]
  choices for $W_2$ with $|W_2|=s$. Therefore, we may use \Cref{c:0-statement-prob} and the union bound to obtain 
  \[
    \Pr*{x \in \hat{A}_3 \setminus \hat{A}_2} \leq \exp\Bigl((1/2+o(1)) s \log d -(2+o(1)) c^2 s \log d \Bigr).
  \]
  There is some $\varepsilon >0$ such that $c= \frac{1}{2}+ \varepsilon$, and therefore
  \[
    \Pr*{x \in \hat{A}_3 \setminus \hat{A}_2} \leq \exp(-(2\varepsilon +o(1)) s\log d) \leq \exp\left(-\frac{\varepsilon}{2K} \cdot \gamma^2 \log d \right),
  \]
  and so \Cref{l:inA3-A2} holds with $\beta=\frac{\varepsilon}{2K}$.
\end{proof}

 \subsection{Proof of Lemma \ref{l:inA2-A1}}

In this section we will prove \Cref{l:inA2-A1}, claiming 
  \[ \Pr{x \in \hat{A}_2 \setminus \hat{A}_1} \leq \exp\left(-\sqrt{d(x)}\right).\] 
 
 The proof is very similar to the proof of \Cref{l:inA3-A2} in the previous section. As in the proof of \Cref{c:0-statement-witness}, since $x \in \hat{A}_2 \setminus \hat{A}_1$, there exists a set $W' \subseteq N(x) \cap (\hat{A}_1 \setminus \hat{A}_0)$ of size $\gamma$. Take $W$ to be a subset $W' \cap S_0(x, 1)$ of size $\gamma-1$, which is possible by \Cref{c:localconnection}\ref{i:small}, and define 
\[ \zeta' := e(W, S_0(x,2)).\] By Properties \ref{c:backwards} and \ref{c:localconnection}\ref{i:sparse}, each $w \in W \subseteq S_0(x,1)$ has at most $2K$ neighbours outside $S_0(x,2)$. Together with \Cref{c:regular} this implies that 
\begin{equation}\label{e:zetaprime-obs}
\zeta' =  |W|(d+O(1)).
\end{equation}
 Moreover, since every $w \in W$ is infected in the first round, we have
  \[|N(w) \cap S_0(x, 2) \cap \hat{A}_0| \geq d(w)/2 - 2\gamma + O(1) =  d/2 - 2\gamma + O(1),\]
and so
  \[ Z'(W) \coloneqq e(W, S_0(x,2) \cap \hat{A}_0) \geq |W|(d/2 - 2\gamma + O(1)) \geq \zeta' p + (c \sigma d - 3\gamma)|W|, \]
  observing that $\mathbb{E}[Z'] = \zeta' p$.

  By \Cref{c:backwards}, every vertex in $S_0(x,2)$ has at most $2K$ neighbours in $W \subseteq B(x, 2)$. If we let $X'_i$ denote the set of elements of $S_0(x,2)$ with $i$ neighbours in $W$ for $i \in [2K]$, we have that $$\zeta' = \sum_{i=1}^{2K} i|X'_i|.$$ 
Similarly as in the proof of \Cref{c:0-statement-prob}, let $X'_{\geq 2}$ be the set of elements of $S_0(x, 2)$ having at least two neighbours in $W$. We have $|X'_{\geq 2}| \leq \binom{|W|}{2} \leq \gamma^2 = o(\gamma d)$, since any two elements of $S_0(x,1)$ have at most one common neighbour in $S_0(x,2)$ by \Cref{c:localconnection}\ref{i:cherry}, and therefore, by~\eqref{e:zetaprime-obs},
\[
  \sum_{i=1}^{2K} i^2 |X'_i| \leq |X'_1| + o(\gamma d) \leq (1+o(1))\zeta' = (1+o(1))\gamma d.
  \]
  By \Cref{l:sumBin}, for $t = |W|(c \sigma d - 3\gamma)$ we have that
  \begin{equation}\label{eq:philarge}
      \Pr{Z'(W) \geq \Ex{Z'} + t} \leq \exp\left(-\frac{2(\gamma-1)^2(c \sigma d - 3\gamma)^2}{(1+o(1))\gamma d}\right) = \exp\left(-(2+o(1))c^2  \gamma  \log d \right).
  \end{equation}
  Moreover, since $W \subseteq N(x)$ and $d/(\gamma-1)= \Theta(\sqrt{d \vartheta})$, there are at most
  \begin{equation} \label{eq:countW}
    \binom{d}{\gamma-1} \leq \exp\Bigl((1/2+o(1))\gamma \log d\Bigr)
  \end{equation}
  choices for $W$.
  Combining \eqref{eq:philarge} and \eqref{eq:countW} and using the fact that $c>1/2$ we obtain
  \[
    \Pr{x \in \hat{A}_2 \setminus \hat{A}_1} \leq \exp\left( -(1+o(1))2c^2  \gamma  \log d + (1/2 +o(1)) \gamma \log d\right) \leq \exp(- \Omega(\gamma \log d)).
  \]
  Since $\log d \gg \vartheta(d)$, it follows that $\gamma \log d \gg \sqrt{d}$, finishing the proof of \Cref{l:inA2-A1}.
  
\section{Examples of geometric graphs}\label{s:examples}

We dedicate this section to the illustration of the class $\mathcal{H}= \bigcup_{K \in \mathbb{N}} \mathcal{H}(K)$ of high-dimensional geometric graphs by giving some examples of graph classes that are contained in $\mathcal{H}$.
In addition to the formal proofs that the required properties are satisfied, we provide some intuition about the \emph{local coordinate system} that makes these graphs \emph{high-dimensional}.

\subsection{Cartesian Product graphs}\label{s:cartesian}
\begin{definition}
For $n\in \mathbb N$    let $(H_i)_{i \in [n]}$ be a sequence of connected graphs, called \emph{base graphs}.
  We define the \emph{(Cartesian) product graph} $G= \car_{i=1}^n H_i$ as the graph with vertex set
  \[V(G) \coloneqq \prod_{i=1}^n V(H_i) = \{(x_1,\dots, x_n) \mid x_i \in V(H_i) \text{ for all } i \in [n]\}\]
  and with edge set
  \[
    E(G) \coloneqq \left\{ \bigl\{ x,y \bigr\} \colon \text{ there is some } i \in [n] \text{ such that } \{x_i,y_i\} \in E(H_i) \text{ and } x_j=y_j \text{ for all }j \neq i \right\}.
  \]
\end{definition} 
In the case of product graphs, there is a clear coordinate system coming from the product structure.

\begin{lemma}\label{l:propproduct}
  Let $C>1$ be a constant and let $H_1,\ldots, H_n$ be graphs such that $1< |H_i| \leq C$ for all $i \in [n]$.
  Then $G_n= \car_{i=1}^n H_i \in \mathcal{H}(C)$ for all $n\in \mathbb N$.
\end{lemma}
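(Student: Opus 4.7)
The plan is to proceed by induction on $n$, with the base case $n = 0$ giving $G_0 = K_1 \in \mathcal{H}(C)$ by definition. For the inductive step I would exploit the product's natural coordinate system $V(G) = \prod_{i=1}^n V(H_i)$: adjacent vertices differ in exactly one coordinate, graph distance decomposes additively as $\dist_G(y, z) = \sum_i \dist_{H_i}(y_i, z_i)$, and any $y \in S(x, \ell)$ differs from $x$ on an index set $I_y \subseteq [n]$ with $|I_y| \leq \ell$. Since $d(y) = \sum_i d_{H_i}(y_i)$ with each summand in $\{1, \ldots, C-1\}$, changing $y$ in at most $\ell$ coordinates alters the degree by at most $\ell(C-1) \leq C\ell$, which proves \Cref{c:regular}. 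For \Cref{c:backwards}, a neighbour of $y$ that stays in $B(x, \ell)$ must flip a coordinate already in $I_y$ (otherwise the distance from $x$ strictly increases), so there are at most $|I_y|(C-1) \leq C\ell$ such neighbours. \Cref{c:bound} is immediate: $|V(G)| \leq C^n \leq \exp(C\delta(G))$, using $\delta(G) \geq n$ (each $H_i$ is connected on at least two vertices, so $d_{H_i}(v) \geq 1$).

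For \Cref{c:localconnection} I would take the non-typical set to be $D = \{y \neq x : |I_y| < \dist_G(x, y)\}$, so that $S_0(x, \ell)$ consists precisely of those $y$ with $|I_y| = \ell$ and $y_i$ adjacent to $x_i$ in $H_i$ for each $i \in I_y$. Clauses \ref{i:small} and \ref{i:sparse} follow from standard counting: a non-typical vertex at distance $\ell$ has $|I_y| = k < \ell$ and uses at least one extra $H_i$-step, leaving strictly fewer than $\ell-1$ free coordinates; likewise a non-typical neighbour of a typical $y$ is obtained by flipping some $i \in I_y$ to a vertex of $N_{H_i}(y_i)$ at distance $\geq 2$ from $x_i$, giving at most $\ell(C-2) \leq C\ell$ options. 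Clause \ref{i:cherry} is a short case analysis: if $u, w \in S_0(x, \ell)$ share a common neighbour $v \in S_0(x, \ell+1)$, the identities $I_v = I_u \cup \{i\} = I_w \cup \{j\}$ together with the typicality of all three vertices force the values of $v$ on every coordinate, so $v$ is uniquely determined by $(u, w)$. For \Cref{c:projection}, given $y \in S(x, \ell)$, I would set $G(y)$ to be the induced subgraph on $\{z \in V(G) : z_i = y_i \text{ for all } i \in I_y\}$, which is canonically isomorphic to $\car_{j \notin I_y} H_j$ and thus in $\mathcal{H}(C)$ by the induction hypothesis on $n$; for any $z \in V(G(y))$ the distance identity gives $\dist_G(x, z) \geq \sum_{i \in I_y} \dist_{H_i}(x_i, y_i) = \dist_G(x, y) \geq \ell$, so $V(G(y)) \cap B(x, \ell - 1) = \emptyset$, and the degree gap $\sum_{i \in I_y} d_{H_i}(y_i)$ is at most $C\ell$.

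The main technical step is \Cref{c:partitioncond}. For typical $y, z \in S_0(x, \ell)$ the key identity is $\dist_G(y, z) = |I_y \triangle I_z| + \sum_{i \in I_y \cap I_z,\, y_i \neq z_i} \dist_{H_i}(y_i, z_i) \geq 2\ell - 2|I_y \cap I_z|$, which shows that $z \in B(y, 2\ell - 1)$ forces $|I_y \cap I_z| \geq 1$. I would then enumerate such $z$ by choosing a shared coordinate $i^* \in I_y$ (at most $\ell$ options), then $z_{i^*}$ among the $d_{H_{i^*}}(x_{i^*})$ neighbours of $x_{i^*}$ in $H_{i^*}$, and finally the remaining $\ell - 1$ coordinates of $I_z$ together with their $d_{H_j}(x_j)$-many neighbour-values in the respective factors, yielding a bound of order $\bigl(\sum_{i \in I_y} d_{H_i}(x_i)\bigr) \cdot d(x)^{\ell-1}/(\ell-1)! \leq \ell(C-1) d(x)^{\ell-1}/(\ell-1)!$, which is absorbed by $\ell K^{\ell-1} d(x)^{\ell-1}$ with $K = C$. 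The main obstacle I expect is getting a degree-based rather than vertex-count-based enumeration: naively bounding each coordinate value by $C - 1$ gives a factor of $(C-1)^\ell n^{\ell-1}$ which does not fit into $C^{\ell-1} d(x)^{\ell-1}$ for small $\ell$, so one must instead exploit that $|N_{H_i}(x_i)| = d_{H_i}(x_i)$ to express the count directly in terms of $d(x)$ rather than $n$.
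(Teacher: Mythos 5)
Your proposal follows essentially the same route as the paper: the same coordinate set $I(x,y)$, the same choice of non-typical set $D=\{y: |I(x,y)|\neq \dist(x,y)\}$, the same projection $G(y)$ obtained by freezing the coordinates in $I(x,y)$ (with induction on the dimension), and the same symmetric-difference inequality $\dist(y,z)\geq |I(x,y)\symdif I(x,z)|$ for the separation property, with the final enumeration absorbed into $d(x)^{\ell-1}$ via $d(x)\geq n$. The "obstacle" you flag at the end is handled in the paper exactly as you suggest (counting coordinate--value pairs against $d(x)$ rather than against $n$ and $C$ separately), so the plan is sound.
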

\begin{proof}
Write $G\coloneqq G_n$. Given distinct vertices $x,y \in V(G)$, let us define
\[
I(x,y) \coloneqq \{ i \in [n] \colon x_i \neq y_i\},
\]
noting that $1\leq|I(x,y)| \leq \dist(x,y)$.

\begin{proof}[Proof of \Cref{c:regular}]\let\qed\relax
This property of (Cartesian) product graphs was already observed by Lichev \cite{Lichev2021TheGC}. Indeed, given $x \in V(G)$, $\ell \in \mathbb{N}$ and $y \in S(x,\ell)$, we see that
\[
|d(x) - d(y)| = \left| \sum_{i \in [n]} d_{H_i}(x_i) - \sum_{i \in [n]} d_{H_i}(y_i)\right| =  \left| \sum_{i \in I(x,y)} (d_{H_i}(x_i) -  d_{H_i}(y_i))\right| \leq C  |I(x,y)| \leq C  \ell.
\]
\end{proof}

\begin{proof}[Proof of \Cref{c:backwards}]\let\qed\relax
Given $x \in V(G)$, $\ell \in \mathbb{N}$ and $y \in S(x,\ell)$, the neighbours of $y$ in $B(x,\ell)$ must differ from $y$ in coordinates in $I(x,y)$. Hence there are at most $C \ell$ neighbours of $y$ in $B(x,\ell)$ as $|I(x, y)| \leq \ell$.
\end{proof}

\begin{proof}[Proof of \Cref{c:localconnection}]\let\qed\relax
Given $x\in V(G)$ let
\[
D \coloneqq \{ y \in V(G) \;:\; |I(x,y)| \neq \dist(x,y)\},
\]
so that for all $\ell \in \mathbb{N}$
\[
S_0(x,\ell) \coloneqq S(x,\ell) \setminus D = \{y \in S(x,\ell) \;:\; |I(x,y)| =\ell\}.
\]

Since a vertex in $S(x,\ell) \cap D$ differs from $x$ in at most $\ell-1$ coordinates it is clear that $|S(x,\ell) \cap D| \leq C^{\ell-1} \binom{n}{\ell-1}\leq C^{\ell-1}d^{\ell-1}$, and so \ref{i:small} holds.
Furthermore, if $y \in S_0(x,\ell)$, then a neighbour of $y$ in $D$ must differ from $y$ in some coordinate in $I(x,y)$, and hence there are at most $C \ell$ of them, and so \ref{i:sparse} holds.
Finally, if $w,y \in S_0(x,\ell)$ are distinct, and have a common neighbour $z \in S_0(x,\ell+1)$, then it is easy to verify that $|I(x,w) \cup I(x,y)| = \ell+1$ and hence there is a unique common neighbour $z \in S_0(x,\ell+1)$ which agrees with $w$ on $I(x,w)$, with $y$ on $I(x,y)$ and with $x$ on $[n] \setminus (I(x,w) \cup I(x,y))$. Hence \ref{i:cherry} holds.
\end{proof}

\begin{proof}[Proof of \Cref{c:projection}]\let\qed\relax
  We induct on the dimension $n$ of the product graph $G_n$. For the base case, it is easy to verify that if $|V(G)| \leq C$, then $G \in \mathcal{H}(C)$. Let $x \in V(G), \ell \in \mathbb{N}$ and $y \in S(x,\ell)$. Let $G(y) \coloneqq \left(\car_{i \not\in I(x,y)} H_i\right) \car \left( \car_{i \in I(x,y)} \{y_i\}\right)$. Clearly $y \in V(G(y))$ and $V(G(y)) \cap B(x, \ell-1)=\emptyset$.

Furthermore, for any $w \in V(G(y))$,
\[
|d_{G(y)}(w) - d_G(w)| = \left|\sum_{i \not\in I(x,y)} d_{H_i}(w_i) - \sum_{i \in [n]} d_{H_i}(w_i)\right| = \sum_{i \in I(x,y)} d_{H_i} (w_i) \leq C |I(x,y)| \leq C \ell.
\]

Finally, since $|I(x,y)| \geq 1$, $G(y)$ is a product graph of dimension at most $n-1$, and so by the induction hypothesis $G(y) \in \mathcal{H}(C)$.
\end{proof}

\begin{proof}[Proof of \Cref{c:partitioncond}]\let\qed\relax
Let $x \in V(G)$, $\ell \in \mathbb{N}$ and $y \in S_0(x,\ell)$. Let
\[
Z \coloneqq B(y, 2\ell-1) \cap S_0(x, \ell) =\{ z \in S_0(x,\ell) \;:\; \dist(z,y) \leq 2\ell-1\}.
\]

Note that, for any $x\in V(G)$, $y \in S_0(x, \ell)$ and $z \in S_0(x, \ell)$ we have $I(x,z) \symdif I(x,y) \subseteq I(y,z)$
and since $|I(x,z)| = |I(x,y)| = \ell$, we have $|I(x,z) \symdif I(x,y)| = 2 |I(x, z)\setminus I(x, y)|$. Hence,
\begin{equation}\label{e:distances}
\dist(y,z) \geq |I(y, z)| \geq |I(x, z)\symdif I(x, y)| = 2 |I(x, z)\setminus I(x, y)|.
\end{equation}
Rearranging this we have
\[
|I(x,z) \setminus I(x,y)| \leq \frac{1}{2} \left(\dist(y,z)\right) <\ell,
\]
for every $z \in Z$. And hence,
\[
Z \subseteq Z' \coloneqq \{ z \in S(x, \ell) \colon |I(x,z) \setminus I(x,y)| \leq \ell-1\},
\]
The choices for $z \in Z'$ may be bounded by first choosing $I(x,z)$ of size $\ell$ with $I(x,z) \cap I(x,y) \neq \emptyset$ and then choosing the values for the coordinates in $I(x,z)$. Therefore,
\[
|Z'| \leq \ell \binom{n-1}{\ell-1}C^{\ell-1} \leq \ell C^{\ell-1}d(x)^{\ell-1}
\]
since $d(x) \geq n$.
\end{proof}

\begin{proof}[Proof of \Cref{c:bound}]\let\qed\relax
Finally, it is clear that $|V(G)| \leq C^n$ and $\delta(G) \geq n$, and hence $|V(G)| \leq \exp(\log C \cdot \delta(G)) \leq \exp( C \delta(G)) $.
\end{proof}
This concludes the proof of \Cref{l:propproduct}.
\end{proof}
\begin{remark}\label{r:cube}
In particular, the $n$-dimensional hypercube is in $\mathcal{H}(2)$ and we can pick $D=\emptyset$ in this case.
\end{remark}

Note that, under the condition of \Cref{l:propproduct}, all degrees in $G_n$ are in $\Theta(n)$, and so $\delta(G_n)$ and $\Delta(G_n)$ differ by at most a constant multiplicative factor.
In particular, \Cref{t:mainThm} implies that for \emph{any} such product graph, the critical window has width $O\left( \sqrt{\frac{\log n}{n}} \right)$.

\subsection{The middle layer and odd graphs}
  \begin{definition}
      The \emph{middle layer graph} $M_n$ is the graph whose vertices are all vectors of length $2n-1$ where either $n$ or $n-1$ many entries are $1$, while all other entries are $0$. Two vertices are connected if they differ in exactly one coordinate.
  \end{definition}
We note that the middle layer graph is not a (Cartesian) product graph, since it can be seen to have girth six.
Note that the middle layer graph is a subgraph of a hypercube, i.e., $M_n \subseteq Q_{2n-1}$, and again here there is a clear coordinate system underlying the graph structure.
Furthermore, $M_n$ is an isometric subgraph of $Q_{2n-1}$, i.e., $\dist_{M_n}(u, v)= \dist_{Q_{2n-1}}(u, v)$ for all $u, v \in V(M_n)$. Since $Q_{2n-1} \in \mathcal{H}(2)$, this allows us to transfer some properties from the hypercube directly.

\begin{lemma}\label{l:propmiddle}
  The middle layer graph $M_n \in \mathcal{H}(4)$ for all $n\in \mathbb N$.
\end{lemma}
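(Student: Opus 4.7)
The plan is to verify \Cref{c:regular,c:backwards,c:localconnection,c:projection,c:partitioncond,c:bound} for $M_n$ by induction on $n$, with the base case $M_1 = K_2$ handled directly (its only projection being $K_1$). Throughout I use that $M_n$ is an isometric subgraph of $Q_{2n-1}$, so that $\dist_{M_n}(u,v)$ equals the Hamming distance $|I(u,v)|$ where $I(u,v) := \{i : u_i \neq v_i\}$.

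For the easier properties: $M_n$ is $n$-regular (a weight-$n$ vertex has $n$ neighbours obtained by flipping any of its $n$ ones; a weight-$(n-1)$ vertex flips any of its $n$ zeros), establishing \Cref{c:regular} with zero discrepancy. For \Cref{c:backwards}, any neighbour of $y \in S(x,\ell)$ in $B(x,\ell)$ must lie in $S(x,\ell-1)$ (since Hamming distance changes by $\pm 1$ under a single flip) and arise by flipping one of the $\ell$ coordinates of $I(x,y)$, giving at most $\ell \leq 4\ell$ such neighbours. For \Cref{c:localconnection} I take $D = \varnothing$, so (i) and (ii) are vacuous, and (iii) holds because a common neighbour $v \in S(x,\ell+1)$ of distinct $u,w \in S(x,\ell)$ must satisfy $I(x,v) = I(x,u) \cup I(x,w)$, determining $v$ uniquely. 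For \Cref{c:partitioncond}, following the product-graph proof (\Cref{l:propproduct}), any $z \in B(y,2\ell-1) \cap S(x,\ell)$ satisfies $|I(x,z) \setminus I(x,y)| < \ell$, is determined by $I(x,z)$, and such index sets number at most $\ell \binom{2n-2}{\ell-1} \leq \ell \cdot 2^{\ell-1} n^{\ell-1} \leq \ell \cdot 4^{\ell-1} d(x)^{\ell-1}$. Finally, \Cref{c:bound} follows from $|V(M_n)| = 2\binom{2n-1}{n} \leq 2 \cdot 4^n \leq \exp(4n)$.

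The main obstacle is \Cref{c:projection}. Given $y \in S(x,\ell)$, let $I := I(x,y)$ and let $a$ denote the number of ones of $y$ on $I$. The natural candidate $G(y) := M_n[\{z : z|_I = y|_I\}]$ is isomorphic to a bipartite layer graph $L(2n-1-\ell, n-a)$, which is a middle layer graph exactly when $\ell$ is even (parity of the weight change forces $a = \ell/2$, so $G(y) \cong M_{n-\ell/2}$). For odd $\ell$, parity forces $a \in \{(\ell-1)/2, (\ell+1)/2\}$ and the resulting layer graph is not of middle type, obstructing the recursion. To fix this, I plan to fix one extra coordinate: pick $j_0 \in I^c$ so that the ones and zeros of $y$ within $I \cup \{j_0\}$ are equinumerous (this requires $y_{j_0} = 1$ in the first case and $y_{j_0} = 0$ in the second, and such a $j_0$ exists because $y$ has at least $n - O(\ell)$ ones and zeros on $I^c$), and redefine $G(y) := M_n[\{z : z|_{I \cup \{j_0\}} = y|_{I \cup \{j_0\}}\}]$. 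A direct count then gives $G(y) \cong M_{n - (\ell+1)/2}$. Because $x_{j_0} = y_{j_0}$ by construction, fixing $j_0$ contributes nothing to Hamming distances, so $V(G(y)) \cap B(x,\ell-1) = \varnothing$ still holds; at most $\ell+1 \leq 4\ell$ coordinates are fixed, so the degree perturbation is at most $4\ell$; and $G(y) \in \mathcal{H}(4)$ by the inductive hypothesis. This completes the verification and shows $M_n \in \mathcal{H}(4)$.
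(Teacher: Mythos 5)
Your proposal is correct and follows essentially the same route as the paper: take $D=\emptyset$, exploit the isometric embedding of $M_n$ into $Q_{2n-1}$ (so distance equals Hamming distance) for \Cref{c:regular,c:backwards,c:localconnection,c:partitioncond,c:bound}, and prove \Cref{c:projection} by induction with the same even/odd split, fixing one extra balancing coordinate when $\ell$ is odd. Your write-up of the odd case is in fact more detailed than the paper's one-line remark, and your constants all land within the required $K=4$.
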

\begin{proof}
Let $G=M_n$. Given distinct vertices $x,y \in V(G)$, let us define
\[
I(x,y) \coloneqq \{ i \in [n] \colon x_i \neq y_i\},
\]
noting that $|I(x,y)| = \dist(x,y)$.

\begin{proof}[Proof of \Cref{c:regular}]\let\qed\relax
  $G$ is $n$-regular.
\end{proof}

\begin{proof}[Proof of \Cref{c:backwards}]\let\qed\relax
The property is preserved under taking isometric subgraphs.
\end{proof}

\begin{proof}[Proof of \Cref{c:localconnection}]\let\qed\relax
  For every $x\in V(G)$ let $D = \emptyset$, so that for all $\ell \in \mathbb{N}$ we have $S_0(x,\ell) = S(x,\ell)$ and thus \ref{i:small} and \ref{i:sparse} hold trivially. Since \ref{i:cherry} is satisfied by the hypercube $Q_{2n-1}$ with $D=\emptyset$, and is preserved under taking isometric subgraphs, it also holds in $M_n$ with $D=\emptyset$.
\end{proof}

\begin{proof}[Proof of \Cref{c:projection}]\let\qed\relax
  We induct on $n$. Let $x \in V(G), \ell \in \mathbb{N}$ and $y \in S(x,\ell)$. Assume first that $\ell$ is even. Let $V(y) = \prod_{i \in I(x, y)} \{y_i\} \times \prod_{i \notin I(x, y)} \{0,1\}$ and set $G(y)=G[V(y)]$, i.e., we fix the coordinates where $y$ differs from $x$ and let the other coordinates vary. It is easy to verify that $G(y)$ is isomorphic to $M_{n-\ell/2}$ and so by the induction hypothesis, we have $G(y) \in \mathcal{H}(4)$. Furthermore, since $M_n$ is $n$-regular and $M_{n-\ell/2}$ is $(n-\ell/2)$-regular, it follows that for any $w \in V(G(y))$, we have $|d_{G(y)}(w) - d_G(w)| = \ell/2$. Finally, it is simple to check that $y \in V(G(y))$ and $V(G(y)) \cap B(x, \ell-1)=\emptyset$.

  The case when $\ell$ is odd is similar: we fix an extra coordinate to balance the number of fixed $0$ coordinates the number of fixed $1$ coordinates.
\end{proof}

\begin{proof}[Proof of \Cref{c:partitioncond}]\let\qed\relax
  The property is satisfied by the hypercube $Q_{2n-1}$ with $D=\emptyset$ and $K=2$, and is preserved under taking isometric subgraphs. However, the degrees of vertices are not the same in both of these graphs.
  Nevertheless, since $Q_{2n-1}$ is $(2n-1)$-regular and $M_n$ is $n$-regular, it follows that for each $y \in S_{M_n}(x,\ell)$,
  \[
  |B_{M_n}(y, 2\ell-1) \cap S_{M_n}(x, \ell)| \leq \ell 2^{\ell-1} (2n-1)^{\ell-1} \leq \ell 4^{\ell-1} d_{M_n}(y)^{\ell-1}.
\]
\end{proof}

\begin{proof}[Proof of \Cref{c:bound}]\let\qed\relax
  Since $V(M_n) \subseteq V(Q_{2n-1})$, we have $|V(M_n)| \leq 2^{2n-1}$ and it follows that $|V(G)| \leq \exp(4 \delta(M_n))$.
\end{proof}
This concludes the proof of \Cref{l:propmiddle}.
\end{proof}

We also consider the \emph{odd graph}, which is a special \emph{Kneser graph}.
\begin{definition}
For $n,k\in \mathbb N$ with $n\geq k$ the \emph{Kneser graph} $K(n, k)$ is the graph with vertex set $V(K(n, k)) = [n]^{(k)} = \binom{[n]}{k}$ consisting of all the $k$-element subsets of $[n]$, where two $k$-element subsets are adjacent in $K(n, k)$ if they are disjoint.
\end{definition}

Instead of directly proving that $O_n \coloneqq K(2n-1, n-1)$ lies in $\mathcal{H}$ we will use the fact, which is essentially folklore, that $M_n$ and $O_n$ are locally isomorphic as follows.
\begin{lemma}
  Let $x \in [2n-1]^{(n-1)}$ and let $\ell < n-1$.
  Then $B_{O_n}(x, \ell) \cong B_{M_n}(x, \ell)$.
\end{lemma}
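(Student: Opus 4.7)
The plan is to construct an explicit map $\phi\colon V(M_n) \to V(O_n)$ which restricts to an isomorphism between $B_{M_n}(x,\ell)$ and $B_{O_n}(x,\ell)$ when $\ell < n-1$. Define
\[
\phi(y) = \begin{cases} y & \text{if } |y| = n-1, \\ [2n-1] \setminus y & \text{if } |y| = n, \end{cases}
\]
which is well-defined since each vertex of $O_n$ is an $(n-1)$-subset of $[2n-1]$. The first step is to compute both distances from $x$. Since $M_n$ is an isometric subgraph of $Q_{2n-1}$, we have $\dist_{M_n}(x,y) = |x \symdif y|$, equal to $2(n-1-|y\cap x|)$ when $|y|=n-1$ and to $2n-1-2|y\cap x|$ when $|y|=n$. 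By induction on the BFS layers of $O_n$, using that the neighbours of an $(n-1)$-set $z$ are exactly the $(n-1)$-subsets of $[2n-1]\setminus z$, one shows that the vertices at $O_n$-distance $2k$ from $x$ are precisely the $(n-1)$-sets $z$ with $|z\cap x|=n-1-k$, while those at distance $2k+1$ are the $(n-1)$-sets $z$ with $|z\cap x|=k$. These two patterns stay disjoint so long as $k + k' < n-1$, hence throughout $B_{O_n}(x,\ell)$ under the hypothesis $\ell < n-1$.

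Second, the above distance formulas give $\dist_{M_n}(x,y) = \dist_{O_n}(x,\phi(y))$ for every $y$ in either ball, so $\phi$ sends $B_{M_n}(x,\ell)$ into $B_{O_n}(x,\ell)$ and an inverse is given by sending an $(n-1)$-set $z$ at even (resp.\ odd) $O_n$-distance from $x$ to $z$ (resp.\ $[2n-1]\setminus z$). Injectivity of $\phi$ on $B_{M_n}(x,\ell)$ reduces to excluding the possibility that an $(n-1)$-set $y_1$ and an $n$-set $y_2$ in the ball satisfy $y_1 = [2n-1]\setminus y_2$, which by the triangle inequality in symmetric difference would force $2\ell \geq |y_1 \symdif y_2| = 2n-1$, contradicting $\ell < n-1$.

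Third, I would verify that $\phi$ preserves adjacency. The edges of $M_n$ arise between an $(n-1)$-set $y_1$ and an $n$-set $y_2$ with $y_1 \subset y_2$, and this condition translates via $\phi$ into $\phi(y_1) \cap \phi(y_2) = y_1 \setminus y_2 = \emptyset$, which is exactly $O_n$-adjacency. Since $M_n$ is bipartite, no further edges exist in $M_n$, so the main (and only non-trivial) task is to rule out \emph{spurious} $O_n$-edges inside $B_{O_n}(x,\ell)$ between two vertices whose $\phi$-preimages lie in the same bipartition class. If $z_1, z_2 \in B_{O_n}(x,\ell)$ both arise from $(n-1)$-sets in $M_n$ (equivalently, both are at even $O_n$-distance from $x$), then $|z_i \cap x| \geq n-1-\lfloor \ell/2 \rfloor$ gives $|z_1 \cap x| + |z_2 \cap x| \geq 2n-2-\ell > n-1 = |x|$ under $\ell < n-1$, which forbids $z_1 \cap z_2 = \emptyset$. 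The odd-odd case is handled symmetrically by applying the same counting to the $n$-set complements $[2n-1]\setminus z_i$. Together these verifications show that $\phi$ restricts to the required isomorphism, and the entire argument rests on the comfortable margin provided by the hypothesis $\ell < n-1$.
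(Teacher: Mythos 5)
Your proposal is correct and uses essentially the same isomorphism as the paper: the layer-wise map that fixes sets at even distance and complements those at odd distance (you simply run it from $M_n$ to $O_n$ rather than the reverse). The only difference is in how spurious same-parity edges are excluded -- the paper invokes the fact that the odd girth of $O_n$ is $2n-1$, while you derive the Kneser-distance formula by BFS and rule them out by direct counting; both verifications are sound.
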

\begin{proof}
  Let $x \in V(O_n)$, so $x$ is a subset of $[2n-1]$ of size $n-1$.
  We define a map between the balls $B_{O_n}(x,\ell)$ and $B_{M_n}(x, \ell)$. For each $k \in \bigl[\lfloor \frac{\ell}{2} \rfloor\bigr]$ and $y \in S_{O_n}(x, 2k)$ we map the set $y$ to its indicator function in $\{0,1\}^{2n-1}$. For every $k \in \bigl[\lfloor \frac{\ell-1}{2} \rfloor\bigr]$ and $y \in S_{O_n}(x, 2k+1)$ we map $y$ to the indicator function of its complement $[2n-1] \setminus y$. Note that the complement has size $n$ and hence corresponds to a vertex in $V(M_n)$.

The map obtained in this way is indeed an isomorphism: If $y, z \in B_{O_n}(x, \ell)$ are such that $\{y, z\} \in E(O_n)$, then $y \cap z=\emptyset$. Since the odd girth of $O_n$ is $2n-1$, without loss of generality $y \in S_{O_n}(x, 2k)$ for some $k \in \bigl[\lfloor \frac{\ell}{2} \rfloor\bigr]$ and $z \in S_{O_n}(x, 2k'+1)$ for some $k' \in \bigl[\lfloor \frac{\ell-1}{2} \rfloor\bigr]$, where $k' = k \pm 1$. But then $y \subseteq [2n-1] \setminus z$ and hence the image of the edge $\{y, z\}$ under the mapping is an edge in $E(M_n)$.
A similar argument shows that every edge in $B_{M_n}(x, \ell)$ is the image of an edge in $B_{O_n}(x, \ell)$ under the mapping and hence, this mapping gives an isomorphism from $B_{O_n}(x, \ell)$ to $B_{M_n}(x, \ell)$ for any $\ell < n-1$.
\end{proof}

So the odd graph $O_n$ is `locally' isomorphic to a graph, i.e., the middle layer graph $M_n$, which we have already shown lies in $\mathcal{H}$. 

As \Cref{c:regular,c:backwards,c:localconnection,c:partitioncond} are all of local nature, they continue to hold in $O_n$ for $\ell  < n-1$, which is already sufficient for our proof. 
Furthermore, since the diameter of $O_n$ is $n-1$, there is only one further case to consider, in which all of the properties above are trivially satisfied with say $K=4$, for which also \Cref{c:bound} trivially holds.

Finally, it remains to show that \Cref{c:projection} holds in $O_n$.
Let $x \in V(O_n)$, $\ell \in \mathbb{N}$ and $y \in S(x, \ell)$.

Assume first that $\ell=2k$ is even. By the structure of the odd graph, the sets $a \coloneqq x \setminus y$ and $b \coloneqq y \setminus x$ have size $k$.
Pick a set $a' \subseteq x \cap y$ and a set $b' \subseteq [2n-1]\setminus (x \cup y)$, both of size $k$
and set 
\[
V(y)\coloneqq\left\{v \in V(O_n)\,  \middle| \begin{array}{l} \big((b \cup a' \subseteq v) \wedge ((b' \cup a)\cap v=\emptyset)\big) \; \vee \\ \big((b' \cup a \subseteq v) \wedge ((b \cup a')\cap v=\emptyset)\big)\; \phantom{\vee}
\end{array}\right\}.
\]
Let $G(y)$ be the induced subgraph of $O_n$ on the vertex set $V(y)$.
Then $y \in V(G(y))$ as $b\cup a' \subseteq y \text{ and } (b' \cup a)\cap y=\emptyset$.
For each vertex $w \in V(y)$ we obtain $|w \cap x| \geq k$ 
and $|w \setminus x|\geq k$. The former inequality implies that $\dist(x, w) \geq 2k+1$ if $\dist(x, w)$ is odd, and the latter implies that $\dist(x, w) \geq 2k$ otherwise, and thus $w \notin B(x, \ell-1)$.
Furthermore, $G(y)$ can be seen to be isomorphic to $M_{n-\ell}$ under the mapping which takes each $w \in V(y)$ with $b' \cup a \subseteq w \text{ and } (b \cup a')\cap w=\emptyset$ to its complement.
Thus by \Cref{l:propmiddle} we have $G(y) \in \mathcal{H}(4)$ and since $G$ is $n$-regular and $M_{n-\ell}$ is $(n-\ell)$-regular we get $|d_{G(y)}(w)-d_G(w)| =\ell \leq 4 \ell$ for each $w \in V(y)$.

The case when $\ell=2k+1$ is similar. In this case, $a \coloneqq x \cap y$ is a set of size $k$ and $b \coloneqq [2n-1]\setminus (x \cup y)$ is a set of size $k+1$.
Pick a set $a' \subseteq x \setminus y$ of size $k$ and a set $b' \subseteq y \setminus x$ of size $k+1$, and consider the set $V(y)$ as defined above.
Taking $G(y)$ to be the induced subgraph of $O_n$ on $V(y)$, we note that $y \in V(G(y))$ as $b' \cup a \subseteq y \text{ and } (b \cup a')\cap y=\emptyset$. Again, for each vertex $w \in V(y)$ we obtain $|w \cap x| \geq k$ 
and $|w \setminus x|\geq k+1$ and thus $w \notin B(x, \ell-1)$. As before, $G(y)$ can be seen to be isomorphic to $M_{n-\ell}$ under the mapping which takes
each $w \in V(y)$ with $b \cup a' \subseteq v \text{ and } (b' \cup a)\cap v=\emptyset$ to its complement and the degrees in $G$ and $G(y)$ differ by $\ell$.

\begin{lemma}\label{l:propodd}
  The odd graph $O_n \in \mathcal{H}(4)$ for all $n\in \mathbb N$.
\end{lemma}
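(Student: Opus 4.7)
The plan is to assemble \Cref{l:propodd} from the local isomorphism $B_{O_n}(x,\ell) \cong B_{M_n}(x,\ell)$ for $\ell < n-1$ together with the explicit projection construction that has already been given in the paragraphs preceding the lemma. First I would observe that \Cref{c:regular} is immediate because $O_n$ is $n$-regular, and \Cref{c:bound} is immediate because $|V(O_n)| = \binom{2n-1}{n-1} \le 2^{2n-1} \le \exp(4 \delta(O_n))$. This handles two of the six required properties without any geometric work.

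Next I would use the local isomorphism to transfer \Cref{c:backwards,c:localconnection,c:partitioncond} from $M_n$ to $O_n$. Concretely, fix $x \in V(O_n)$, $\ell \in \mathbb{N}$, and $y \in S_{O_n}(x,\ell)$, and choose the exceptional set $D := \emptyset$ (matching the choice used for $M_n$). For the regime $\ell < n-1$, everything on the left-hand sides of Properties \ref{c:backwards}, \ref{c:localconnection}, and \ref{c:partitioncond} is determined by the isomorphism type of $B_{O_n}(x,\ell+1) \subseteq B_{O_n}(x, 2\ell)$, so the corresponding bounds established in \Cref{l:propmiddle} for $M_n$ transfer verbatim (here we use that the diameter of $M_n$ exceeds $2\ell$ as well, so the $M_n$-balls faithfully witness the $O_n$-balls). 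For the remaining regime $\ell \ge n-1$, the diameter bound $\operatorname{diam}(O_n) = n-1$ forces $S_{O_n}(x,\ell) = \emptyset$ whenever $\ell \ge n$, and when $\ell = n-1$ all quantities in Properties \ref{c:backwards}, \ref{c:localconnection}, \ref{c:partitioncond} can be crudely bounded using $|V(O_n)| \le \binom{2n-1}{n-1}$, and one checks that the constant $K=4$ suffices.

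Finally I would verify \Cref{c:projection} by appealing to the two explicit constructions of $G(y)$ given just before the lemma statement, one for $\ell = 2k$ and one for $\ell = 2k+1$. In both cases the construction picks auxiliary sets $a'$ and $b'$ of appropriate size, defines $V(y)$ as the union of vertices containing $b \cup a'$ and avoiding $b' \cup a$ with vertices containing $b' \cup a$ and avoiding $b \cup a'$, and exhibits an explicit bijection with $V(M_{n-\ell})$ by complementing one of the two parts. I would check that $y \in V(G(y))$ directly from the definitions, that $V(G(y)) \cap B_{O_n}(x,\ell-1) = \emptyset$ from the inequalities $|w \cap x| \ge k$ and $|w \setminus x| \ge k$ (or $k+1$ in the odd case), that $G(y) \cong M_{n-\ell} \in \mathcal{H}(4)$ by \Cref{l:propmiddle}, and that $|d_{G(y)}(w) - d_{O_n}(w)| = \ell \le 4\ell$ for every $w \in V(G(y))$.

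The main obstacle I anticipate is bookkeeping rather than conceptual: one must be careful that the local isomorphism argument correctly handles the distinction between even and odd distances (since vertices on even spheres correspond to subsets while vertices on odd spheres correspond to complements), and one must verify that the parity-dependent choice of $G(y)$ in \Cref{c:projection} genuinely produces a subgraph disjoint from $B(x, \ell-1)$ and isomorphic to a copy of $M_{n-\ell}$ (or a similar member of $\mathcal{H}(4)$). Once this parity case analysis is written out cleanly, the lemma follows.
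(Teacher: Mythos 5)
Your proposal is correct and follows essentially the same route as the paper: transfer \Cref{c:regular,c:backwards,c:localconnection,c:partitioncond} from $M_n$ via the local isomorphism of balls (with $D=\emptyset$), dispose of the remaining large-$\ell$ case using the diameter bound $\operatorname{diam}(O_n)=n-1$, verify \Cref{c:bound} directly, and establish \Cref{c:projection} by the explicit parity-dependent construction of $G(y)\cong M_{n-\ell}$. Your extra care about the radius $2\ell$ needed to witness the separation property is a welcome refinement, but the argument is the same.
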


\subsection{The folded hypercube}

\begin{definition}
For $n\in \mathbb N$ the \emph{folded hypercube} $\tilde{Q}_n$ is a graph on the same vertex set as the $(n-1)$-dimensional hypercube, i.e., $V(\tilde{Q}_{n-1})= \{0,1\}^{n-1}$. It is obtained from $Q_{n-1}$ by joining each vertex $x$ to its antipodal vertex $\tilde{x}$, where $\tilde{x}$ is the vertex differing from $x$ in every coordinate. That is, $\tilde{Q}_n$ is obtained by adding all edges $\{x, \tilde{x}\}$ to the edge set of $Q_{n-1}$.
\end{definition}

We can think of the underlying coordinate system here as being $\{0,1\}^{n}$, where the first $n-1$ coordinates represent the `actual' vertex $v \in V(Q_{n-1})$ and the final coordinate represents movement to the antipodal point.

We note that it is relatively easy to see that the folded hypercube is not expressible as the product of bounded order graphs. Indeed, as we will see shortly, $\tilde{Q}_n$ is locally isomorphic to $Q_n$, and so every connected subgraph of $\tilde{Q}_n$ of bounded order is a subgraph of a hypercube. However, it is easy to show that any product graph in which every factor is a subgraph of a hypercube is at most as dense as a hypercube, and $\tilde{Q}_n$ has strictly more edges than $Q_{n-1}$. In fact, with more care, it can be shown that $\tilde{Q}_n$ is not expressible as a Cartesian product even with factors of unbounded order.

As with $M_n$ and $O_n$, the folded hypercube is `locally' isomorphic to a graph, i.e., the hypercube $Q_n$, which we have already shown lies in $\mathcal{H}$. Indeed the following is shown in \cite{KKM99}.

\begin{lemma}
  Let $x \in \{0,1\}^{n-1}$ and $y \in \{0,1\}^n$ and let $\ell < \left \lfloor \frac{n}{2} \right\rfloor$. Then $B_{\tilde{Q}_n}(x, \ell) \cong B_{Q_n}(y, \ell)$.
\end{lemma}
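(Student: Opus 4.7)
The proof plan is to view $\tilde{Q}_n$ as the quotient of $Q_n$ by the antipodal involution and to show that the resulting $2$-to-$1$ quotient map is a covering whose restriction to any ball of radius $\ell < \lfloor n/2 \rfloor$ is a graph isomorphism. Concretely, I will identify $V(\tilde{Q}_n)$ with $(\mathbb{Z}/2)^n / \langle \mathbf{1}_n\rangle$, where $\mathbf{1}_n$ is the all-ones vector, so that the $n-1$ ``cube'' generators together with the antipodal generator of $\tilde{Q}_n$ correspond uniformly to the images $[e_1], \ldots, [e_n]$ of the standard basis. Under this identification the quotient map $\pi\colon V(Q_n) \to V(\tilde{Q}_n)$, $v \mapsto [v]$, is a graph homomorphism sending each edge $\{v, v+e_i\}$ of $Q_n$ to an edge of $\tilde{Q}_n$, with fibres $\pi^{-1}([v]) = \{v, v + \mathbf{1}_n\}$. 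Reconciling this quotient description with the paper's $\{0,1\}^{n-1}$ description of $\tilde{Q}_n$ (the projection onto the first $n-1$ coordinates sends $[e_n]$ to $e_1 + \cdots + e_{n-1}$, the antipodal generator) is the main setup step.

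After fixing any lift $y' \in \pi^{-1}(x)$, it suffices to show that $\pi$ restricts to a graph isomorphism from $B_{Q_n}(y', \ell)$ onto $B_{\tilde{Q}_n}(x, \ell)$. Surjectivity is immediate by lifting walks: any walk of length at most $\ell$ in $\tilde{Q}_n$ starting at $x$ lifts step-by-step to a walk in $Q_n$ starting at $y'$ of the same length, and $\pi$ is distance non-increasing, giving $\pi\bigl(B_{Q_n}(y', \ell)\bigr) = B_{\tilde{Q}_n}(x, \ell)$. The common arithmetic input used in the two remaining steps is that $\ell < \lfloor n/2 \rfloor$ implies $2\ell \leq n - 2$, which is verified by separating the parity of $n$.

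For injectivity on $B_{Q_n}(y', \ell)$, if $v_1 \neq v_2$ both map to the same class then $v_2 = v_1 + \mathbf{1}_n$, forcing $\dist_{Q_n}(v_1, v_2) = n$, which contradicts the triangle-inequality bound $\dist_{Q_n}(v_1, v_2) \leq 2\ell \leq n - 2$. For edge-preservation, any edge of $\pi\bigl(B_{Q_n}(y', \ell)\bigr)$ comes from a pair $v_1, v_2 \in B_{Q_n}(y', \ell)$ with $v_1 + v_2 \in \{e_i : i \in [n]\} \cup \{e_i + \mathbf{1}_n : i \in [n]\}$; the second alternative would force $\dist_{Q_n}(v_1, v_2) = n - 1 > 2\ell$, so in fact $v_1 + v_2 = e_i$ for some $i$, and $\{v_1, v_2\}$ is already an edge of $Q_n$. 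Together with the homomorphism property of $\pi$ this shows the restriction is an isomorphism of induced subgraphs.

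The hard part is really just to set up the quotient description cleanly so that the two generator types of $\tilde{Q}_n$ (coordinate flips versus antipodal flip) are treated uniformly; once this is done, the rest of the proof is routine inequality-chasing with $2\ell$ versus $n$. The bound $\ell < \lfloor n/2 \rfloor$ is visibly tight: e.g.\ for $n = 3$ and $\ell = \lfloor n/2 \rfloor = 1$, the ball $B_{\tilde{Q}_3}(x, 1)$ induces $K_4$ whereas $B_{Q_3}(y, 1)$ induces a star $K_{1,3}$, so both injectivity and edge-preservation genuinely fail at the boundary.
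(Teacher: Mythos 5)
Your argument is correct. Note, however, that the paper does not prove this lemma at all: it simply cites it as a known fact from the reference [KKM99], so there is no ``paper proof'' to compare against line by line. Your covering-space argument --- realising $\tilde{Q}_n$ as the antipodal quotient of $Q_n$, lifting walks to get surjectivity of $\pi$ on balls, and then using $2\ell \le n-2$ to rule out both the identification $v_2 = v_1 + \mathbf{1}_n$ (injectivity) and the ``antipodal'' edge type $v_1 + v_2 = e_i + \mathbf{1}_n$ (edge-preservation in the reverse direction) --- is a clean and complete self-contained proof, and is the natural analogue of the explicit local isomorphism the paper does write out for the odd graph versus the middle layer graph (where vertices at odd distance are mapped to their complements). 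Two very minor points you could make explicit: the identification of the quotient with the paper's $\{0,1\}^{n-1}$ model should name the section (e.g.\ the representative with last coordinate $0$) rather than just the projection, and the reduction from an arbitrary $y \in \{0,1\}^n$ to the chosen lift $y'$ uses the vertex-transitivity of $Q_n$, which deserves a half-sentence. Your closing observation that $K_4 = \tilde{Q}_3$ shows the bound $\ell < \lfloor n/2 \rfloor$ is tight is a nice sanity check that the inequality $2\ell \le n-2$ is genuinely needed in both places where you invoke it.
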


As before, since by \Cref{r:cube} we have $Q_n \in \mathcal{H}(2)$, and \Cref{c:regular,c:backwards,c:localconnection,c:partitioncond} are all of local nature, they continue to hold in $\tilde{Q}_n$ for $\ell  < \left \lfloor \frac{n}{2} \right\rfloor$, which is already sufficient for our proof. Furthermore, since the diameter of $\tilde{Q}_n$ is $\left \lfloor \frac{n}{2} \right\rfloor$, there is only one further case to consider, in which all of the properties are trivially satisfied with say $K=3$, for which also \Cref{c:bound} trivially holds.

Finally, it is a simple exercise to show that \Cref{c:projection} holds in $\tilde{Q}_n$. Indeed, since all vertices at distance at most $\ell-1$ from a vertex $x$ differ from $x$ or $\tilde{x}$ in at most $\ell-1$ coordinates, given a vertex $w \in S_{\tilde{Q}_n}(x,\ell)$ it is easy to find a subgraph of $\tilde{Q}_n$ containing $w$ which is isomorphic to $Q_{n-2\ell+2}$ and disjoint from $B_{\tilde{Q}_n}(x,\ell-1)$ by fixing appropriately chosen coordinate sets similarly as in the proof of \Cref{l:propproduct}.

\begin{lemma}\label{l:propfolded}
  The folded hypercube $\tilde{Q}_n \in \mathcal{H}(3)$ for all $n\in \mathbb N$.
\end{lemma}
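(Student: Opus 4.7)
The plan is to leverage the local isomorphism $B_{\tilde Q_n}(x,\ell) \cong B_{Q_n}(y,\ell)$ for $\ell < \lfloor n/2 \rfloor$ (already cited from \cite{KKM99}), together with the fact that $Q_n \in \mathcal{H}(2)$ established in \Cref{l:propproduct} and \Cref{r:cube}. Since \Cref{c:regular,c:backwards,c:localconnection,c:partitioncond} all refer only to vertex degrees, to neighbourhoods, or to distances within $B(x,2\ell-1)$, they can be checked purely inside the ball of radius roughly $2\ell$ around $x$. First I would fix $\ell < \lfloor n/4 \rfloor$ so that $2\ell - 1 < \lfloor n/2 \rfloor$; then the relevant ball in $\tilde Q_n$ is isomorphic (as an induced subgraph, preserving degrees \emph{and} distances) to the corresponding ball in $Q_n$, so each of these four properties transfers verbatim with the same constant $K = 2$, which is absorbed into $K = 3$ for $\tilde Q_n$.

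Next I would handle the regime $\ell \geq \lfloor n/4 \rfloor$. Since $\tilde Q_n$ has diameter $\lfloor n/2 \rfloor$ and every vertex has degree exactly $n$, all of the quantities $K\ell$, $K^{\ell-1} d(x)^{\ell-1}$, $\ell K^{\ell-1} d(x)^{\ell-1}$ appearing in \Cref{c:regular,c:backwards,c:localconnection,c:partitioncond} are at least polynomial in $n$ of degree $\Omega(n)$, which trivially dominates the crude bounds $|d(x)-d(y)| = 0$, $|N(y)\cap B(x,\ell)| \le n$, $|V(\tilde Q_n)| \le 2^{n-1}$, so these properties hold with room to spare (taking $D = \emptyset$ if convenient).

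For \Cref{c:projection}, I would give an explicit construction. For $\ell < \lfloor n/2 \rfloor$ and $y \in S_{\tilde Q_n}(x,\ell)$, thinking of $V(\tilde Q_n) = \{0,1\}^{n-1}$, the vertices of $B(x,\ell-1)$ all differ from $x$ or from $\tilde x$ in at most $\ell - 1$ of the $n-1$ coordinates, so there is an index set $I \subseteq [n-1]$ with $|I| \le 2\ell - 2$ such that fixing any coordinate outside $I$ to disagree with $x$ in at least one chosen position avoids $B(x,\ell-1)$. Concretely, I would pick a coordinate set $J \supseteq I \cup \{i : x_i \ne y_i\}$ of size $2\ell-2$ and let $\tilde G(y)$ be the subcube of $Q_{n-1}$ obtained by fixing the $J$-coordinates to agree with $y$; this is a copy of $Q_{n-2\ell+2} \in \mathcal{H}(2)$ disjoint from $B(x,\ell-1)$, and each of its vertices loses exactly $2\ell-2$ hypercube-neighbours plus possibly the one antipodal edge, so degrees change by at most $2\ell - 1 \le 3\ell$. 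For the large-$\ell$ case one can simply take $\tilde G(y) = \{y\} \cong K_1 \in \mathcal{H}(3)$, since $\ell = \Theta(n)$ absorbs any degree loss.

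Finally, \Cref{c:bound} is immediate: $|V(\tilde Q_n)| = 2^{n-1} = \exp((\log 2)(n-1)) \le \exp(3\delta(\tilde Q_n))$. The main obstacle I anticipate is \Cref{c:projection}, because unlike the purely local properties it requires producing an \emph{actual} subgraph in $\mathcal{H}(K)$, and the antipodal edges of $\tilde Q_n$ can link vertices across the projection; the key point of the construction above is to fix enough coordinates that no antipodal edge stays inside the subcube, which forces $\tilde G(y)$ to be a genuine hypercube rather than a folded one. Once this is handled, the remaining verifications reduce cleanly to the already-established case $Q_n \in \mathcal{H}(2)$.
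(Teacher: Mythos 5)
Your overall strategy is the paper's: transfer \Cref{c:regular,c:backwards,c:localconnection,c:partitioncond} through the local isomorphism between $B_{\tilde{Q}_n}(x,\ell)$ and $B_{Q_n}(\cdot,\ell)$, build the projections for \Cref{c:projection} by fixing roughly $2\ell$ coordinates so that every vertex of the resulting subcube is far from both $x$ and $\tilde{x}$, and check \Cref{c:bound} directly. Your projection construction is correct, and your observation that fixing even one coordinate destroys all antipodal edges inside the subcube (so that $G(y)$ is a genuine hypercube and the induction closes) is exactly the right point.

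The gap is in your large-$\ell$ regime. You cut the isomorphism regime off at $\ell < \lfloor n/4 \rfloor$ and assert that for $\ell \geq \lfloor n/4\rfloor$ all the bounds are ``polynomial in $n$ of degree $\Omega(n)$'' and hence dominate crude estimates. This is false for the bound $K\ell$ in \Cref{c:backwards}: it is linear in $n$, and for $\lfloor n/4\rfloor \leq \ell < n/3$ we have $3\ell < n = |N(y)|$, so the crude estimate does not yield $|N(y)\cap B(x,\ell)| \leq 3\ell$. (The property does hold --- one can check that $|N(y)\cap B(x,\ell)| = \ell$ for every $y \in S(x,\ell)$ with $\ell \leq n/2 - 1$, by counting which of the $n-1$ cube-neighbours and the antipodal neighbour decrease $\min(h, n-h)$ --- but that is a computation, not domination.) More seriously, \Cref{c:localconnection}\ref{i:cherry} is a structural statement rather than a counting bound, so it cannot be absorbed by a large right-hand side, and with $D=\emptyset$ it genuinely fails near the diameter: for even $n$ and $\ell = n/2-1$, take $u$ with $h(x,u)=\ell$ and $w$ with $I(x,u)\subset I(x,w)$ and $h(x,w)=\ell+2=n-\ell$ (e.g.\ $n=8$, $x=0^7$, $u=1110000$, $w=1110011$); then $u,w\in S(x,\ell)$ and their two common cube-neighbours both have Hamming distance $n/2$ from $x$, hence both lie in $S(x,\ell+1)$. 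The paper avoids the first problem by running the local isomorphism all the way to $\ell < \lfloor n/2\rfloor$, leaving only the single diameter case $\ell = \lfloor n/2\rfloor$, where $3\lfloor n/2\rfloor \geq n$ and $S(x,\ell+1)=\emptyset$; for the properties involving larger radii (such as \Cref{c:partitioncond} and \Cref{c:localconnection}\ref{i:cherry}, whose verification at level $\ell$ requires knowledge of the graph out to radius roughly $3\ell$, not $2\ell-1$) it additionally leans on the fact, stated in \Cref{s:conditions}, that the main theorem only uses these properties for $\ell \leq 6$. You need one of these two repairs --- extending the isomorphism regime, or restricting to bounded $\ell$ with a suitable choice of $D$ --- to cover the interval $\lfloor n/4\rfloor \leq \ell < \lfloor n/2\rfloor$.
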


\section{Discussion}\label{s:discussion}

In this paper we extended the results of Balogh, Bollob\'{a}s and Morris \cite{BaBoMo2009} to a large class of high-dimensional geometric graphs. It is perhaps natural to ask what the limits of our proof methods are, and what structural conditions are necessary to exhibit a similar behaviour in terms of the location and width of the critical window, which in both cases seem to be controlled by the local structure of the graphs. 
\begin{question}\label{q:universal}
  Let $G$ be an $n$-regular graph. Under what assumptions are the first two terms in the expansion of the critical probability of majority bootstrap percolation in $G$ given by $\frac{1}{2} - \frac{1}{2}\sqrt{\frac{\log n}{n}}$?
\end{question}

Our proof methods seem to rely on the high-dimensional geometric structure of the graphs, however there are many other graphs which seem inherently high-dimensional which our results do not cover.
For example, the \emph{halved cube} is the graph on $\{0,1\}^n$ where two vertices are adjacent if they have Hamming distance exactly two.
Geometrically, this is the $1$-skeleton of the polytope constructed from an \emph{alternation} of a hypercube.
Whilst this graph is regular and highly symmetric, the presence of many triangles, and indeed large cliques in every neighbourhood, cause the property of bounded backwards expansion (\Cref{c:backwards}) to fail and so the graph does not lie in the class $\mathcal{H}$ as defined in \Cref{s:conditions}.
Furthermore, motivated by the case of the middle layer graph and the odd graph, it would be interesting to know if similar behaviour is present in (bipartite) Kneser graphs with a larger range of parameters or in other graphs arising from intersecting set systems such as Johnson graphs. Here, whilst the graphs still display a sort of fractal self-symmetry, an issue arises with the quantitative aspects of \Cref{c:projection}, since the degrees in these projections drop too quickly and this effect dominates the variations in the number of infected vertices in a vertex's neighbourhood. Another interesting example comes from the \emph{permutahedron}, a well-studied graph which like the hypercube has many equivalent `high-dimensional' representations. Here, whilst \Cref{c:regular,c:backwards,c:localconnection,c:projection,c:partitioncond} are satisfied, the graph is too large to satisfy \Cref{c:bound}, superexponential in its regularity. In a forthcoming paper \cite{CEGK25} we will show that the analogue of \Cref{c:mainThm} also holds in the permutahedron.

In comparison to Theorem \ref{t:BBM}, we give a weaker bound on the width of the critical window. As pointed out in \Cref{r:finer1statement}, our methods recover the upper bound in Theorem \ref{t:BBM}, that is, if $\lambda > \frac{1}{2}$ and
\[
p=\frac{1}{2}-\frac{1}{2} \sqrt{\frac{\log \Delta(G_n)}{\Delta(G_n)}} + \frac{\lambda \log \log \Delta(G_n)}{\sqrt{\Delta(G_n) \log \Delta(G_n)}},
\]
then $\lim_{n \to \infty} \Phi(p,G_n) =1$. However, in the $0$-statement slightly stronger structural assumptions and more delicate counting arguments are required to further bound the width in this manner. Since some of these structural assumptions do not hold in all of our examples (for example in $M_n$ and $O_n$), and also for ease of presentation, we have given a simpler exposition which just determines the first two terms in the expansion of the critical probability. However, these extra assumptions and counting arguments will be necessary in order to determine the threshold for the permutahedron, and so the details of how to recover the stronger bound on the width of the critical window will be covered in \cite{CEGK25}.

In the light of the dependence of our bounds on the minimum and maximum degree, it is also interesting to ask what can happen when the host graph is irregular. Whilst Theorem \ref{t:mainThm} implies that the critical window is still bounded away from $\frac{1}{2}$ for graphs in $\mathcal{H}$, it is not clear if the second term in the expansion of the critical probability is controlled by the maximum degree, the minimum degree or the average degree of the graph (if by any of them). In the particular case where $G$ is a product of many \emph{stars}, we will show in a forthcoming paper \cite{CEGK25} that the upper bound in Theorem \ref{t:mainThm} is in fact tight, but it is not clear if this behaviour is universal even in irregular product graphs.

It is perhaps also interesting to consider how the process evolves with other non-constant infection thresholds.
For example, if we take $r(v) = \alpha \cdot d(v)$ for some constant $\alpha > 0$, then the arguments in this paper (with the application of Hoeffding's inequality in~\eqref{e:probwitness} replaced by an application of \cite[Theorem 2.7]{McDiarmid}) show that for $d$-regular graphs in $\mathcal{H}$ the $r$-neighbour bootstrap percolation process undergoes a similar transition around
\[
  p = \alpha - \sqrt{\alpha (1-\alpha)} \cdot \sqrt{\frac{\log d}{d}},
\]
and likely this will remain true when $\alpha$ is a very slowly shrinking function of $d$.
It would be interesting to consider what happens for smaller $\alpha$, for example $d^{-\beta}$ for some $0< \beta < 1$.
\begin{question}
  Let $G$ be an $d$-regular high-dimensional graph and consider the $r$-neighbour bootstrap percolation process where $r = \sqrt{d}$.
  What can we say about the location and width of the critical window?
\end{question}

In another direction, for small constant values of the infection threshold $r$, the average case behaviour of the $r$-neighbour bootstrap percolation process on the hypercube has also been considered, where in particular for $r=2$ a threshold for percolation was determined by Balogh and Bollob\'{a}s \cite{BaBo2006HypercubeR2} and a sharper threshold was shown by Balogh, Bollob\'{a}s and Morris \cite{BaBoMo2010HighDimGrid}.
It would be interesting to know if a corresponding threshold could be determined in other high-dimensional graphs, see \cite{KMM24} for an analysis of bootstrap percolation on Hamming graphs.
Moreover, even for the hypercube $Q_n$, very little is known about the typical behaviour of the process for $r \geq 3$, see for example \cite[Conjecture 6.3]{BaBoMo2010HighDimGrid}.

The majority bootstrap percolation process has also been studied in the binomial random graph $G(m,q)$ \cite{HJK17,SV15}.
Perhaps surprisingly, in \cite{HJK17} it is shown that if the edge probability $q$ is close to the connectivity threshold, then the first two terms in the expansion of the critical probability for $G(m,q)$ agree with those given by Bollob\'{a}s, Balogh and Morris for the hypercube  $Q_n$ in \Cref{t:BBM}, taking $n = (m-1)q$ to be the expected degree of a vertex.
The authors conjecture that this behaviour is in fact universal to all approximately regular graphs with a similar density to the hypercube and sufficiently strong expansion properties.
In this context, it would be interesting to study the majority bootstrap percolation process on random subgraphs of high-dimensional graphs, in particular of the hypercube, above the connectivity threshold.

\begin{question}
Let $(Q_n)_q$ denote a random subgraph  of the hypercube $Q_n$, in which each edge of  $Q_n$ is  retained independently with probability $q\in (0,1)$. 
 What is the critical probability for majority bootstrap percolation on $(Q_n)_q$ for $q> \frac{1}{2}$? 
\end{question}

\subsection*{Acknowledgements}
The authors were supported in part by the Austrian Science Fund (FWF) [10.55776/\{DOC183, F1002, P36131, W1230\}].
For the purpose of open access, the authors have applied a CC BY public copyright licence to any Author Accepted Manuscript version arising from this submission.

\printbibliography
\end{document}